\documentclass[11pt,a4paper]{article}

\usepackage[T1]{fontenc}
\usepackage[utf8]{inputenc}
\usepackage{lmodern}
\usepackage[margin=2.35cm]{geometry}
\usepackage{amsmath,amssymb,amsthm,mathtools}
\usepackage{array,booktabs,longtable}
\usepackage{tikz}
\usepackage{enumitem}
\usepackage{microtype}
\usepackage{cite}
\usepackage[hidelinks]{hyperref}
\hypersetup{
  pdftitle={Paired Domination in Cubic Bipartite Graphs},
  pdfauthor={Changhong Lu and Qi Wu},
  pdfsubject={Paired domination in cubic bipartite graphs},
  pdfkeywords={paired domination, cubic bipartite graph, perfect matching, Gallai-Edmonds decomposition}
}

\allowdisplaybreaks
\setlist{itemsep=2pt,topsep=4pt}

\newtheorem{theorem}{Theorem}[section]
\newtheorem{lemma}[theorem]{Lemma}
\newtheorem{proposition}[theorem]{Proposition}
\newtheorem{corollary}[theorem]{Corollary}
\theoremstyle{definition}
\newtheorem{definition}[theorem]{Definition}
\newtheorem*{conjecture*}{Conjecture}
\theoremstyle{remark}
\newtheorem{remark}[theorem]{Remark}
\numberwithin{equation}{section}

\newcommand{\gpr}{\gamma_{\mathrm{pr}}}
\newcommand{\defi}{\operatorname{def}}
\newcommand{\OO}{\mathsf O}
\newcommand{\II}{\mathsf I}
\newcommand{\DD}{\mathsf D}
\newcommand{\RR}{\mathsf R}
\newcommand{\sOO}{(\OO,\OO)}
\newcommand{\sOI}{(\OO,\II)}
\newcommand{\sOD}{(\OO,\DD)}
\newcommand{\sOR}{(\OO,\RR)}
\newcommand{\sIO}{(\II,\OO)}
\newcommand{\sII}{(\II,\II)}
\newcommand{\sID}{(\II,\DD)}
\newcommand{\sIR}{(\II,\RR)}
\newcommand{\sDO}{(\DD,\OO)}
\newcommand{\sDI}{(\DD,\II)}
\newcommand{\sDD}{(\DD,\DD)}
\newcommand{\sDR}{(\DD,\RR)}
\newcommand{\sRO}{(\RR,\OO)}
\newcommand{\sRI}{(\RR,\II)}
\newcommand{\sRD}{(\RR,\DD)}
\newcommand{\sRR}{(\RR,\RR)}
\newcommand{\Nout}{N^{+}}
\newcommand{\Nin}{N^{-}}
\newcommand{\dout}{d^{+}}
\newcommand{\din}{d^{-}}
\newcommand{\ClStates}{\Sigma_{\mathrm{cl}}}
\newcommand{\NonOpenStates}{\Sigma_{\mathrm{no}}}

\title{Paired Domination in Cubic Bipartite Graphs}
\author{%
Changhong Lu$^{a}$ and Qi Wu$^{b,*}$\\[4pt]
\small $^{a}$School of Mathematical Sciences, Key Laboratory of MEA (Ministry of Education),\\[-1pt]
\small Shanghai Key Laboratory of PMMP, Nantong Institute for Applied Mathematics\\[-1pt]
\small and Artificial Intelligence, East China Normal University, Shanghai 200241, China\\[2pt]
\small $^{b}$School of Mathematics and Statistics, Jiangsu Normal University,\\[-1pt]
\small Xuzhou, Jiangsu 221116, China}
\date{}

\begin{document}
\maketitle
\begingroup
\renewcommand{\thefootnote}{\fnsymbol{footnote}}
\footnotetext[1]{E-mail addresses:
\href{mailto:chlu@math.ecnu.edu.cn}{\texttt{chlu@math.ecnu.edu.cn}} (C. Lu),
\href{mailto:wuqimath@163.com}{\texttt{wuqimath@163.com}} (Q. Wu, corresponding author).}
\endgroup

\begin{abstract}
A paired dominating set of a graph $G$ is a dominating set $D$ such that
$G[D]$ has a perfect matching. The minimum size of such a set is the paired
domination number $\gpr(G)$. Desormeaux and Henning conjectured that every
cubic bipartite graph $G$ of order $n$ satisfies $\gpr(G)\le n/2$. We prove
the conjecture in the sharp integer form
$\gpr(G)\le 2\lfloor |V(G)|/4\rfloor$ for every finite simple cubic
bipartite graph $G$. The proof combines a
directed contraction along a perfect matching, switching arguments based on
dominator trees, a four-symbol boundary calculus for two-edge cuts, and the
Gallai--Edmonds decomposition. Equality is attained by $K_{3,3}$ when
$|V(G)|\equiv2\pmod4$ and by the cube $Q_3$ when
$|V(G)|\equiv0\pmod4$.
\end{abstract}

\noindent\textbf{Keywords:} paired domination; cubic bipartite graph;
perfect matching; Gallai--Edmonds decomposition.

\medskip
\noindent\textbf{2020 Mathematics Subject Classification:}
05C69, 05C70, 05C75.

\section{Introduction}\label{sec:introduction}

Let $G$ be a graph. We use $V(G)$ and $E(G)$ to
denote the vertex set and edge set of $G$, respectively. A set
$D\subseteq V(G)$ is a \emph{dominating set} if every vertex in
$V(G)\setminus D$ has a neighbor in $D$. A dominating set $D$ is a
\emph{paired dominating set} if $G[D]$ has a perfect matching. The minimum
size of a paired dominating set is the \emph{paired domination number} of
$G$, denoted by $\gpr(G)$.

Haynes and Slater~\cite{HaynesSlater1995,HaynesSlater1998}
introduced paired domination and proved several basic results. Fitzpatrick
and Hartnell~\cite{FitzpatrickHartnell1998} also studied this parameter.
General background on domination can be found in the book of Haynes et
al.~\cite{HaynesHedetniemiSlater1998}. Surveys of paired domination were
given by Desormeaux and Henning~\cite{DesormeauxHenning2014} and by
Desormeaux et al.~\cite{DesormeauxHaynesHenning2020}.

Haynes and Slater~\cite{HaynesSlater1998} proved that $\gpr(G)\le 2n/3$ for
every connected graph $G$ of order $n\ge6$ and minimum degree at least two.
Huang and Shan~\cite{HuangShan2011} later corrected a gap in their proof.
Henning~\cite{Henning2007} studied graphs with large paired domination
number. Further bounds for graphs with minimum degree at least three and
four were obtained by Henning et
al.~\cite{HenningPilsniakTumidajewicz2022} and by Bujt\'as and
Henning~\cite{BujtasHenning2026}, respectively.

For cubic graphs, Chen et al.~\cite{ChenSunXing2007} proved that
$\gpr(G)\le 3n/5$. Goddard and Henning~\cite{GoddardHenning2009} showed that
the Petersen graph is the only connected cubic graph for which equality
holds. Favaron and Henning~\cite{FavaronHenning2004} studied paired
domination in claw-free cubic graphs. Lu et
al.~\cite{LuWangWangWu2019} proved the $4n/7$ bound for connected claw-free
graphs with minimum degree at least three. Sheng and
Lu~\cite{ShengLu2020} obtained the $4n/7$ bound for connected cubic graphs
other than the Petersen graph in a 2020 preprint. Kosari et
al.~\cite{KosariEtAl2022} later published another proof of the same bound.

For cubic bipartite graphs, Desormeaux and
Henning~\cite{DesormeauxHenning2014} proposed the following conjecture.
To the best of our knowledge, no proof for all cubic bipartite graphs has
appeared in the subsequent literature.

\begin{conjecture*}[Desormeaux--Henning]
If $G$ is a cubic bipartite graph of order $n$, then $\gpr(G)\le n/2$.
\end{conjecture*}

Since every paired dominating set has even size, the conjecture is
equivalent to the following integer form for connected graphs.

\begin{theorem}\label{thm:main}
Every finite simple connected cubic bipartite graph $G$ satisfies
\[
 \gpr(G)\le 2\left\lfloor\frac{|V(G)|}{4}\right\rfloor.
\]
Equivalently, $\gpr(G)\le |V(G)|/2$ when $|V(G)|\equiv0\pmod4$, and
$\gpr(G)\le |V(G)|/2-1$ when $|V(G)|\equiv2\pmod4$.
\end{theorem}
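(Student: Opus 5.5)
The plan follows the outline in the abstract. Fix a perfect matching $M$ of $G$, which exists by K\"onig's theorem since $G$ is cubic bipartite. Contract every edge $ab\in M$, with $a$ in the white class and $b$ in the black class, to a single node. This yields a $2$-in, $2$-out digraph $H$ on $n/2$ nodes. An arc from node $x=a_xb_x$ to node $y$ is drawn when $b_x$ is adjacent to $a_y$.

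The basic observation concerns a set $S$ of matching edges. The vertex set $\bigcup_{x\in S}\{a_x,b_x\}$ is automatically paired. It dominates $G$ exactly when every node $y\notin S$ has an in-neighbour in $S$, which dominates $a_y$, and an out-neighbour in $S$, which dominates $b_y$. So it suffices to find a set $S$ of nodes of $H$ with
\[
|S|\le\lfloor n/4\rfloor
\]
that is simultaneously in-dominating and out-dominating in this sense. We also retain the freedom to choose $M$ and to replace some matching pairs by other pairs, such as $b_x a_y$ along an arc.

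\textbf{Step 1 (greedy/switching bound).} Grow $S$ greedily from a root along a dominator tree of $H$. Whenever a node $y$ lacks an in- or out-neighbour in $S$, switch along the $M$-alternating structure, that is, modify $M$ by an alternating cycle. The goal is to show that a minimal counterexample has no configuration allowing a saving. Through this I expect to derive $\gpr(G)\le n/2$ whenever $G$ is $3$-edge-connected, with the $-1$ improvement when $n\equiv2\pmod 4$ obtained from a parity/discharging count on $H$.

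\textbf{Step 2 (two-edge cuts).} Cubic bipartite graphs are $2$-edge-connected, and bridges are impossible by a parity argument. If $G$ has a $2$-edge cut, split along it into two pieces and repair each piece into a smaller cubic bipartite graph. Then classify how a paired dominating set of each side behaves at the cut endpoints using four boundary states: outside and undominated ($\OO$), inside ($\II$), dominated from its own side ($\DD$), and requiring domination from across ($\RR$). This accounts for the sixteen pair-states. By induction on $n$, show that for each side the sizes of the best partial solutions in the relevant states obey inequalities that glue to at most $2\lfloor n/4\rfloor$. The delicate point is the floor: $n_1+n_2=n$ with both $n_i\equiv2\pmod4$ gives slack of $2$, which must be used when the gluing requires an extra vertex.

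\textbf{Step 3 (Gallai--Edmonds).} For the residual $3$-edge-connected case, the saving in Step 1 requires a large matching in an auxiliary undirected graph on the nodes of $H$, for example one in which pairs of nodes can share a dominator. Here apply Gallai--Edmonds to bound its deficiency $\defi$. Cubicity and $3$-edge-connectivity should force few odd components in $D(\cdot)$ relative to $|A(\cdot)|$, giving a near-perfect matching and hence $|S|\le\lfloor n/4\rfloor$.

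I expect the main obstacle to be Step 2. Getting the boundary calculus to close, so that every combination of side states yields the sharp integer bound rather than $n/2$ plus a constant, will require a careful verification over all $16\times16$ state combinations. The fallback there is a stronger inductive hypothesis on the states.
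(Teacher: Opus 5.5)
Your proposal has genuine gaps; it never supplies the mechanisms that make the bound work.

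\textbf{Steps 1 and 3.} Reducing to an in- and out-dominating set of $\lfloor n/4\rfloor$ nodes of $H$ (a union of whole $M$-edges) comes with no argument. ``Grow greedily along a dominator tree and switch along alternating cycles'' is not a verifiable step, and ``parity/discharging'' is not a mechanism for the $-1$.

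What actually works is different. Take a perfect matching $P$ of the underlying $4$-regular multigraph of the contraction, and lift each arc $t\to h$ of $P$ to the single non-$M$ edge $x_t y_h$. Then every $M$-edge has exactly one selected end, and its other end is dominated through $M$. So $n/2$ is immediate whenever the contraction has a perfect matching. For $3$-edge-connected $G$ this follows from $4$-edge-connectivity and Edmonds' polytope.

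The saving when $n\equiv 2\pmod 4$ needs two further ingredients:
\begin{itemize}
\item a near-perfect matching exposed at $z$ with a directional certificate, whose existence (Lemma~\ref{lem:directional-certificate}) uses maximal barriers and a Hall argument;
\item a head-switching Boolean formula, whose satisfiability is the real content.
\end{itemize}
The dominator tree is not a structure along which you grow $S$. It is a counting device (Lemma~\ref{lem:dominator-leaves}) showing that an unsatisfiable formula would need more nonleaf dominators than exist.

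Your Step 3 also puts Gallai--Edmonds where it gives nothing. For $3$-edge-connected $G$ the contraction is $4$-edge-connected, so it has a perfect matching or is factor-critical; the deficiency is $0$ or $1$. Gallai--Edmonds pays off only without $3$-edge-connectivity. It applies on the whole contraction for $n\equiv0\pmod 4$ (Corollary~\ref{cor:even-all}), and on the contraction with the marked edge's arc deleted (Lemma~\ref{lem:terminal-alternative}). There a deficiency of at least $2$ saves two vertices.

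\textbf{Step 2.} Your state set is wrong. ``Outside and undominated'' and ``requiring domination from across'' are the same state. You also have no state for a selected boundary vertex whose partner in the perfect matching of $G[S]$ lies across the cut, so the gluing is not exact for paired domination.

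More seriously, induction on the repaired sides only bounds the minimum over the seven closed states (Proposition~\ref{prop:closure-formula}). You get no control over which state is attained, and many pairs are incompatible: for example $\sOO$ against any state without an open coordinate, or $\RR$ facing $\DD$ or $\RR$. Repairs can also create parallel edges, which fall outside the simple-graph induction hypothesis.

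Your slack analysis targets the wrong case. Two sides $\equiv2\pmod4$ give $n\equiv0\pmod 4$, where you have slack $2$; the paper settles $n\equiv0$ outright. The hard case is $n\equiv2$ with one side $\equiv0$. There the costs add to exactly $n/2-1$, so a single incompatibility is fatal. The paper closes this case in three steps:
\begin{itemize}
\item It takes a side of minimum order, whose closure is simple and $3$-edge-connected. That side yields $\sOO$ and $\sII$ at cost $c/2$, or a non-open state at cost $c/2-1$.
\item It reduces the other side by marked-edge reductions to a simple graph plus an extension chain, governed by the exact transfer table.
\item It uses the terminal alternative: either a saving of $2$, or $\mathcal G$-states with a prescribed $\II$ coordinate.
\end{itemize}
The ``stronger inductive hypothesis on the states'' you defer to is exactly this missing part.
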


The result for a graph with more than one component follows at once.

\begin{corollary}\label{cor:main-all}
Every finite simple cubic bipartite graph $G$ satisfies
$\gpr(G)\le 2\lfloor |V(G)|/4\rfloor$.
\end{corollary}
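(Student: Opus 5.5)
The corollary follows from Theorem~\ref{thm:main} by applying it to each component and combining the paired dominating sets. Let $G$ have components $G_1,\dots,G_k$. Each $G_i$ is a finite simple connected cubic bipartite graph, so Theorem~\ref{thm:main} gives a paired dominating set $D_i$ of $G_i$ with
\[
|D_i|\le 2\lfloor |V(G_i)|/4\rfloor .
\]
Set $D=D_1\cup\dots\cup D_k$. Since there are no edges between components:
\begin{itemize}
\item $D$ dominates $G$, because every vertex of $G_i$ outside $D_i$ has a neighbour in $D_i$;
\item $G[D]$ is the disjoint union of the graphs $G_i[D_i]$, so the union of their perfect matchings is a perfect matching of $G[D]$.
\end{itemize}
Hence $\gpr(G)\le\sum_i |D_i|$.

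It remains to show
\[
\sum_i 2\lfloor n_i/4\rfloor\le 2\lfloor n/4\rfloor ,
\]
where $n_i=|V(G_i)|$ and $n=\sum_i n_i$. This is superadditivity of the floor function: $\sum_i\lfloor n_i/4\rfloor$ is an integer and is at most $\sum_i n_i/4=n/4$, so it is at most $\lfloor n/4\rfloor$. Multiplying by $2$ gives the required bound.

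The same argument also settles the original conjecture as stated, since $2\lfloor n/4\rfloor\le n/2$. If the empty graph counts as cubic, the bound $0\le 0$ holds trivially.

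There is no real obstacle. The only point to check is the floor inequality, which is where one might worry about losing something when several components have order $\equiv 2\pmod 4$; superadditivity rules this out. All the substantive work lies in Theorem~\ref{thm:main}.
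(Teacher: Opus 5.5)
Your proposal is correct and follows the paper's argument: apply Theorem~\ref{thm:main} to each component and use $\sum_i\lfloor n_i/4\rfloor\le\lfloor n/4\rfloor$. The only difference is that you check directly that the union of the componentwise paired dominating sets is paired dominating, which gives the subadditivity that suffices, whereas the paper simply cites additivity of $\gpr$ over components.
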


\begin{proof}
Let $G_1,\ldots,G_k$ be the components of $G$. The paired domination number
is additive over components. By Theorem~\ref{thm:main},
\[
 \gpr(G)=\sum_{i=1}^k\gpr(G_i)
 \le \sum_{i=1}^k 2\left\lfloor\frac{|V(G_i)|}{4}\right\rfloor
 \le 2\left\lfloor\frac{|V(G)|}{4}\right\rfloor.
\]
\end{proof}

The bound is attained in both congruence classes. The two ends of an edge
form a paired dominating set of $K_{3,3}$, and hence
$\gpr(K_{3,3})=2$. In the cube $Q_3$, no edge dominates all eight vertices,
while the four vertices of a face form a paired dominating set. Thus
$\gpr(Q_3)=4$.

The proof has three parts. First, choose a perfect matching of the cubic
bipartite graph and contract its edges. Orient every remaining edge from
one partite class to the other. The resulting directed multigraph has
indegree two and outdegree two at every vertex. We lift a matching of this
multigraph back to the original graph and switch some of its edges until
its endpoints dominate the graph.

Second, consider a two-edge cut. We assign one of four symbols to each
boundary vertex. The symbols record whether the vertex is selected,
whether it is matched inside its side, and whether it needs a selected
neighbor on the other side. A transfer table shows how the two boundary
symbols change when two vertices are added.

Finally, we apply the Gallai--Edmonds decomposition. If the graph obtained
after deleting at most one arc has no perfect matching, its matching
deficiency saves at least two selected vertices. If it has a perfect
matching, a prescribed switch gives the needed boundary state. We then
choose a smallest counterexample and complete the proof.

Several matching-theoretic ingredients used below are standard.
Edge--vertex domination was developed by Lewis~\cite{Lewis2007}, and the
dominating-matching formulation was given by Hedetniemi et
al.~\cite{HedetniemiHedetniemiHaynes2014}. We also use Edmonds' perfect
matching polytope~\cite{Edmonds1965}, Tutte's theorem, maximal barriers, and
the Gallai--Edmonds decomposition; see
Akiyama and Kano~\cite{AkiyamaKano2011} and Lov\'asz and
Plummer~\cite{LovaszPlummer1986}.

The new part of the proof consists of three ingredients. First, we develop
switching lemmas for lifted matchings in legal balanced digraphs. Their
proofs use a Boolean support graph and a counting argument in its dominator
tree. Second, we introduce four boundary symbols and determine the exact
state transfer through a two-vertex extension. Third, we combine a
relative switching lemma with the Gallai--Edmonds decomposition after one
arc is deleted. These ingredients are then joined in a smallest-counterexample
argument.

Section~\ref{sec:contraction-switching} gives the main matching and
switching lemmas. Section~\ref{sec:boundary-transfer} studies the boundary
states at a two-edge cut. Section~\ref{sec:global-proof} proves the main
theorem. The appendix checks the transfer table.

\section{Preliminaries and switching}\label{sec:contraction-switching}

\subsection{Notation}\label{sec:prelim}

All graphs in this paper are finite. Unless stated otherwise, graphs are
simple. A multigraph may have parallel edges, but it has no loops. Its \emph{underlying simple graph} is obtained by replacing each set of
parallel edges by one edge. Parallel edges are treated as distinct
\emph{edge instances} when we discuss matchings or cuts. Adjacency and
domination in a multigraph refer to its underlying simple graph. Paired
domination is defined using this adjacency relation, while a perfect
matching consists of edge instances. Thus parallel edges do not change
domination, but they remain distinct in matchings and cuts.

The order of a graph $G$ is $|V(G)|$. For $v\in V(G)$, let $N_G(v)$ and
$d_G(v)$ denote its open neighborhood and degree. A graph is \emph{cubic}
if every vertex has degree three. For a nonempty proper set
$U\subset V(G)$, let $\delta_G(U)$ be the set of edge instances with
exactly one end in $U$. This set is the \emph{edge cut} determined by
$U$. An edge cut of size two is a \emph{two-edge cut}.
An edge is a \emph{bridge} if its deletion increases the number of
components. A graph is $k$-edge-connected if
$|\delta_G(U)|\ge k$ for every nonempty proper set $U\subset V(G)$.

If $\delta_G(U)$ is an edge cut, then $G[U]$ and
$G[V(G)\setminus U]$ are called the two \emph{sides} of the cut. A vertex
of a side is a \emph{boundary vertex} if it is incident with an edge of
the cut. If $A,B\subseteq V(G)$ are disjoint, let
$E_G(A,B)$ be the set of edge instances with one end in $A$ and the other
in $B$, and put $e_G(A,B)=|E_G(A,B)|$. We also write
$E_G(A)=E(G[A])$. For a graph $J$, let $o(J)$ be the number of odd
components of $J$.

A matching is a set of pairwise disjoint edge instances. A
\emph{maximum matching} has the largest possible number of edges. For a
matching $K$, let $V(K)$ be the set of its endpoints. A matching is \emph{perfect}
if it covers every vertex. It is \emph{near-perfect} if it covers all but
one vertex; the uncovered vertex is called the \emph{exposed vertex}. A
matching $K$ is \emph{dominating} if $V(K)$ dominates the graph. A graph
is \emph{factor-critical} if deleting any vertex leaves a graph with a
perfect matching. For two edge sets $A$ and $B$, their symmetric
difference is denoted by $A\mathbin\triangle B$.

We use $\gpr(G)$ for the paired domination number defined in the
Introduction.

The following elementary degree-counting observation will be used
repeatedly.

\begin{lemma}\label{lem:cubic-bipartite-bridgeless}
Every connected cubic bipartite multigraph is bridgeless.
\end{lemma}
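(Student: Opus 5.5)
Suppose, to reach a contradiction, that some connected cubic bipartite multigraph $G$ with bipartition $(X,Y)$ has a bridge $e=xy$ with $x\in X$ and $y\in Y$. Let $U$ be the vertex set of the component of $G-e$ that contains $x$. Then $\delta_G(U)=\{e\}$, and $y\notin U$. The plan is a degree count inside $U$, with edge instances counted carefully because $G$ may have parallel edges.

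Put $U_X=U\cap X$ and $U_Y=U\cap Y$. Every edge instance of $G[U]$ joins a vertex of $U_X$ to a vertex of $U_Y$, since $G$ is bipartite and has no loops. We count $e_G(U_X,U_Y)$, the number of edge instances with both ends in $U$, in two ways.

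From the $X$ side, each vertex of $U_X$ is incident with three edge instances. All of them go to $U_Y$, except that $x$ also sends the single instance $e$ out of $U$. This gives $e_G(U_X,U_Y)=3|U_X|-1$.

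From the $Y$ side, every vertex of $U_Y$ has all three of its edge instances inside $U$, because the only edge leaving $U$ is $e$, and its end in $U$ is $x\in X$. This gives $e_G(U_X,U_Y)=3|U_Y|$.

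Equating the two counts gives $3|U_X|-1=3|U_Y|$, so $3\mid 1$, which is impossible. Hence $G$ has no bridge.

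The only delicate step is to count edge instances rather than edges of the underlying simple graph, so that the degree-three condition is used correctly in the multigraph setting. Connectivity is not really needed: the same argument shows that no cubic bipartite multigraph has a bridge.
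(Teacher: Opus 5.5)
Your proof is correct and follows essentially the same route as the paper: both count the edge instances inside one component $U$ of $G-e$ from each partite class, which gives $3(|U\cap X|-|U\cap Y|)=\pm1$, impossible modulo $3$. Your remark that connectivity is not needed is also right, since the argument only uses the component of $G-e$ containing one end of $e$.
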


\begin{proof}
Let $G=(X,Y;E)$ be connected and cubic, and suppose that $e$ is a bridge.
For one component $U$ of $G-e$, let $d_X$ and $d_Y$ be the numbers of cut
edges whose end in $U$ lies in $X$ and $Y$, respectively. Since the cut
consists only of $e$, we have $d_X-d_Y\in\{1,-1\}$. Counting the ends of
edges inside $U$ from the two partite classes gives
$3(|X\cap U|-|Y\cap U|)=d_X-d_Y$, which is impossible. Thus $G$ has no
bridge.
\end{proof}

For an arc $u\to v$, the vertex $u$ is its \emph{tail} and $v$ is its
\emph{head}. The underlying multigraph of a directed multigraph is obtained
by forgetting all arc directions. For a directed multigraph $\vec H$, the
out- and in-neighborhoods of a vertex $v$ are $\Nout_{\vec H}(v)$ and
$\Nin_{\vec H}(v)$. The corresponding degrees are
$\dout_{\vec H}(v)$ and $\din_{\vec H}(v)$. When an undirected edge
instance comes from an arc, we keep the direction of that arc in the
notation.

\subsection{Matchings and contractions}\label{sec:contraction}

Edge--vertex domination was studied in detail by Lewis~\cite{Lewis2007}.
Hedetniemi, Hedetniemi, and Haynes~\cite{HedetniemiHedetniemiHaynes2014}
proved its equivalence with paired domination. In the matching form used
here, their result is the following. For completeness, we include a short proof.

\begin{proposition}[Hedetniemi--Hedetniemi--Haynes]
\label{prop:matching-form}
For every graph $G$ without isolated vertices,
\[
 \gpr(G)=2\min\bigl\{|K|:K\text{ is a matching and }V(K)\text{ dominates }G\bigr\}.
\]
\end{proposition}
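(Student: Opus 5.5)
The plan is to prove the two inequalities separately, writing $\mu(G)$ for the minimum size of a matching $K$ with $V(K)$ dominating $G$. Note first that $\mu(G)$ is well defined. Since $G$ has no isolated vertices, a maximal matching $M$ of $G$ exists with $V(M)$ dominating. Indeed, an undominated vertex $v\notin V(M)$ has a neighbour $w$, and $w\notin V(M)$ as well (otherwise $v$ would be dominated). Then $M\cup\{vw\}$ is a larger matching, contradicting maximality. So the minimum on the right-hand side ranges over a nonempty family.

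For the inequality $\gpr(G)\le 2\mu(G)$, take a matching $K$ with $|K|=\mu(G)$ and $V(K)$ dominating. Then $D=V(K)$ is a dominating set with $|D|=2|K|$. Moreover $K$ is a perfect matching of $G[D]$, because every edge of $K$ has both ends in $D$ and covers each vertex of $D$ exactly once. Hence $D$ is a paired dominating set, and $\gpr(G)\le 2\mu(G)$.

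For the reverse inequality, let $D$ be a paired dominating set with $|D|=\gpr(G)$, and let $K$ be a perfect matching of $G[D]$. Then $K$ is a matching of $G$ with $V(K)=D$, and $D$ dominates $G$. Hence $\mu(G)\le |K|=|D|/2$, which gives $2\mu(G)\le\gpr(G)$.

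Neither direction has a real obstacle. The only point needing care is the existence of some paired dominating set (equivalently, some dominating matching), so that $\gpr(G)$ and the minimum are finite. This is where the hypothesis of no isolated vertices is used, through the maximal-matching argument above. In the multigraph setting of this paper the argument is unchanged: a matching of edge instances has the same vertex set $V(K)$ as the corresponding matching in the underlying simple graph, and domination refers to the underlying simple graph.
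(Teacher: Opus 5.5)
Your proof is correct and follows the paper's argument: a perfect matching $K$ of $G[D]$ is a dominating matching with $V(K)=D$, and conversely $V(K)$ for a dominating matching $K$ is a paired dominating set in which $K$ is a perfect matching of the induced subgraph. Your extra maximal-matching argument shows that both minima range over nonempty families; the paper leaves this point implicit, and it is where the no-isolated-vertices hypothesis is actually used.
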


\begin{proof}
Let $D$ be a paired dominating set, and let $K$ be a perfect matching of
$G[D]$. Then $V(K)=D$, so $K$ is a dominating matching and
$|D|=2|K|$. Conversely, if $K$ is a matching and $V(K)$ dominates $G$,
then $V(K)$ is a paired dominating set whose induced subgraph contains
$K$ as a perfect matching. Taking the minimum in both directions proves
the equality.
\end{proof}

By Hall's theorem, every $r$-regular bipartite graph with $r\ge1$ has a
perfect matching. Repeatedly deleting a perfect matching gives a
decomposition into perfect matchings. We use this fact below.

\begin{definition}\label{def:legal}
A directed multigraph $\vec H$ is \emph{balanced $2$-in--$2$-out} if every
vertex has indegree two and outdegree two. It is \emph{legal} if it is
loopless, the two out-neighbors of each vertex are distinct, and the two
in-neighbors of each vertex are distinct. A directed multigraph with both
properties is called a \emph{legal balanced digraph}.

The \emph{canonical bipartite lift} $L(\vec H)$ has partite sets
$X=\{x_v:v\in V(\vec H)\}$ and $Y=\{y_v:v\in V(\vec H)\}$. It
contains the fixed edge $x_v y_v$ for every $v$ and the edge $x_u y_v$
for every arc $u\to v$. If $P$ is a matching in the underlying
multigraph of $\vec H$, orient each edge instance of $P$ as it occurs in
$\vec H$. The corresponding edges $x_u y_v$ form the \emph{standard lift}
of $P$.

Let $R$ be a set containing at most one arc of $\vec H$, and put
$\vec Q=\vec H-R$. The \emph{canonical lift associated with
$(\vec H,\vec Q)$} is
$L(\vec H,\vec Q)=L(\vec H)-\{x_u y_v:u\to v\in R\}$. If $R$ contains
one arc, we also call $L(\vec H,\vec Q)$ the
\emph{edge-deleted canonical lift}.

Conversely, let $G=(X,Y;E)$ be a simple cubic bipartite graph and let $M$
be a perfect matching. Write $M=\{x_v y_v:v\in I\}$, and contract every
edge of $M$. Each edge $x_u y_v\in E(G)\setminus M$ becomes
the arc $u\to v$. The resulting digraph is the \emph{legal contraction}
of $(G,M)$.
\end{definition}

The conditions in Definition~\ref{def:legal} imply that $L(\vec H)$ is a
simple cubic bipartite graph. Conversely, the legal contraction of
$(G,M)$ is balanced $2$-in--$2$-out, its underlying multigraph is
$4$-regular, and $G=L(\vec H)$.

\begin{figure}[htbp]
\centering
\begin{tikzpicture}[
  vertex/.style={circle,draw,inner sep=1.3pt,font=\small},
  selected/.style={very thick},
  available/.style={densely dashed},
  every node/.style={font=\small}
]
% Standard lift.
\node[vertex] (axt) at (0,0.7) {$x_t$};
\node[vertex] (ayt) at (0,-0.7) {$y_t$};
\node[vertex] (axh) at (2,0.7) {$x_h$};
\node[vertex] (ayh) at (2,-0.7) {$y_h$};
\draw[available] (axt)--(ayt);
\draw[available] (axh)--(ayh);
\draw[selected] (axt)--(ayh);
\node at (1,-1.35) {(a) standard lift};

% Head switch.
\node[vertex] (bxt) at (4.1,0.7) {$x_t$};
\node[vertex] (byt) at (4.1,-0.7) {$y_t$};
\node[vertex] (bxh) at (6.1,0.7) {$x_h$};
\node[vertex] (byh) at (6.1,-0.7) {$y_h$};
\draw[available] (bxt)--(byt);
\draw[available] (bxt)--(byh);
\draw[selected] (bxh)--(byh);
\node at (5.1,-1.35) {(b) head switch};

% Tail switch.
\node[vertex] (cxt) at (8.2,0.7) {$x_t$};
\node[vertex] (cyt) at (8.2,-0.7) {$y_t$};
\node[vertex] (cxh) at (10.2,0.7) {$x_h$};
\node[vertex] (cyh) at (10.2,-0.7) {$y_h$};
\draw[selected] (cxt)--(cyt);
\draw[available] (cxt)--(cyh);
\draw[available] (cxh)--(cyh);
\node at (9.2,-1.35) {(c) tail switch};
\end{tikzpicture}
\caption{The three local choices associated with a matched arc $t\to h$.
A thick edge is selected, while dashed edges are available but not selected.}
\label{fig:lift-switches}
\end{figure}

\begin{lemma}\label{lem:cut}
Let $G=(X,Y;E)$ be a finite simple cubic bipartite graph, let $M$ be a
perfect matching of $G$, and let $\vec H$ be the legal contraction of
$(G,M)$. If $G$ is three-edge-connected, then the underlying multigraph
$H$ of $\vec H$ is four-edge-connected.
\end{lemma}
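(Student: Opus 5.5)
The plan is to pull a small cut of $H$ back to a small cut of $G$ by uncontracting the matching edges. Let $U$ be a nonempty proper subset of $V(\vec H)$, and let $S=\{x_v,y_v:v\in U\}\subseteq V(G)$. Then $S$ is a nonempty proper subset of $V(G)$. Since every edge of $M$ has both ends in $S$ or both ends outside $S$, no edge of $M$ lies in $\delta_G(S)$. The edges of $E(G)\setminus M$ correspond bijectively to the arcs of $\vec H$, and hence to the edge instances of $H$. The edge $x_uy_v$ lies in $\delta_G(S)$ exactly when the arc $u\to v$ has exactly one end in $U$. Therefore
\[
|\delta_G(S)|=|\delta_H(U)|.
\]
Three-edge-connectivity of $G$ then gives $|\delta_H(U)|\ge 3$. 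It remains to rule out $|\delta_H(U)|=3$.

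For this step I would use a parity argument in $\vec H$. Let $a$ be the number of cut arcs leaving $U$ and $b$ the number entering $U$. Since every vertex has outdegree two and indegree two, the arcs with tail in $U$ number $2|U|$, and the arcs with head in $U$ also number $2|U|$. The arcs with both ends in $U$ are counted in both totals, so $a=b$. Consequently $|\delta_H(U)|=a+b$ is even, and the value $3$ is impossible. This gives $|\delta_H(U)|\ge4$.

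One could also argue parity directly in $G$: for a bipartite cubic graph, $|\delta\cap X\text{-ends}|\equiv|\delta\cap Y\text{-ends}|\pmod 3$. The directed counting above is cleaner, so I would use it.

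The only point that needs care is the treatment of parallel edges. Edge instances are counted with multiplicity in $\delta_H(U)$, and the correspondence between arcs and edges of $E(G)\setminus M$ must be exactly bijective. This holds by the definition of the legal contraction, since $G$ is simple and each non-matching edge gives exactly one arc. No step looks like a genuine obstacle; the evenness observation is the key idea.
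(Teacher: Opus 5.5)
Your proof is correct and is essentially the paper's: pull a cut of $H$ back to a cut of $G$ of the same size, use three-edge-connectivity, then use evenness to exclude size three. The paper gets evenness from the handshake count in the $4$-regular $H$, and your in/out balance $a=b$ is the same count. One small caution about your aside, which you do not rely on: the congruence modulo $3$ alone would not give evenness. You would need the exact identity $3(|X\cap S|-|Y\cap S|)=d_X-d_Y$ together with $|X\cap S|=|Y\cap S|$.
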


\begin{proof}
A vertex set of $H$ corresponds to a union of edges of $M$, and its edge cut
is the same set of edges outside $M$ as the corresponding cut of $G$. Every cut
of a $4$-regular multigraph has even size. Since $G$ has no cut of
size one or two, every proper nonempty cut of $H$ has size at least four.
\end{proof}

The next lemma is an immediate specialization of Edmonds' description of
the perfect matching polytope~\cite{Edmonds1965}; see also Lov\'asz and
Plummer~\cite{LovaszPlummer1986}. We include the short verification because
the probabilistic form is used repeatedly.

\begin{lemma}[Edmonds]\label{lem:uniform-even}
Let $H$ be an even-order loopless $4$-regular $4$-edge-connected multigraph.
There is a probability distribution on the perfect matchings of $H$ such
that every edge instance belongs to a random perfect matching with
probability $1/4$. In particular, every edge instance belongs to some
perfect matching.
\end{lemma}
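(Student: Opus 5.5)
We will show that the constant vector $x\equiv 1/4$ on the edge instances of $H$ lies in Edmonds' perfect matching polytope of $H$. Once this is done, Carathéodory's theorem writes $x$ as a convex combination $\sum_i \lambda_i \chi^{P_i}$ of incidence vectors of perfect matchings $P_i$. Choosing $P_i$ with probability $\lambda_i$ then gives the desired distribution, since $\Pr[e\in P]=\sum_{i:e\in P_i}\lambda_i=x_e=1/4$. In particular every edge instance has positive probability, so it lies in some perfect matching. Parallel edges cause no trouble: Edmonds' theorem holds verbatim for loopless multigraphs, with one coordinate per edge instance, and $H$ has even order as required.

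We verify Edmonds' constraints one at a time.
\begin{itemize}
\item \emph{Nonnegativity.} This is clear, since every coordinate equals $1/4$.
\item \emph{Degree constraints.} Each vertex meets exactly four edge instances, because loops are excluded and $H$ is $4$-regular. Hence $x(\delta(v))=4\cdot\tfrac14=1$.
\item \emph{Odd-set constraints.} Let $S\subseteq V(H)$ have odd size with $|S|\ge3$. We need $x(\delta(S))\ge1$, that is, $|\delta(S)|\ge4$. Since $|V(H)|$ is even, the complement of $S$ is nonempty, so $S$ is a nonempty proper subset. The $4$-edge-connectivity of $H$ therefore gives $|\delta_H(S)|\ge4$.
\end{itemize}

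Equivalently, one could use the form $x(E(S))\le(|S|-1)/2$. By degree counting, $2|E(S)|=4|S|-|\delta(S)|\le 4|S|-4$, so $x(E(S))\le(|S|-1)/2$.

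This confirms that $x$ lies in the polytope, which completes the argument. The only point needing care is this odd-set verification, and in particular confirming that the complement of $S$ is nonempty so that the connectivity hypothesis applies.
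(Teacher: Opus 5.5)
Your proposal is correct and follows the paper's proof essentially verbatim. Like the paper, you show that the constant vector $1/4$ satisfies the degree and odd-cut constraints of Edmonds' perfect matching polytope, using $4$-regularity and $4$-edge-connectivity. You also check explicitly that an odd set is a proper subset, and you note the equivalent $x(E(S))\le(|S|-1)/2$ form; these are harmless additions.
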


\begin{proof}
By Edmonds' perfect matching polytope theorem~\cite{Edmonds1965}, it is
enough to verify that the constant vector $x_e=1/4$ satisfies the vertex
and odd-cut constraints. We have $x(\delta_H(v))=1$ for every vertex $v$.
If $U$ has odd order, then $|\delta_H(U)|\ge4$, and hence
$x(\delta_H(U))\ge1$. Thus $x$ is a convex combination of incidence
vectors of perfect matchings. The coefficients give the required
distribution.
\end{proof}

\begin{remark}[Matchings in multigraphs]\label{rem:multigraph-matching}
In Edmonds' theorem, we use one variable for each edge instance. For the
Gallai--Edmonds decomposition, each parallel class may be replaced by one
edge. This preserves adjacency, components, matching size, the possible
sets of covered vertices, and factor-criticality. After the decomposition
is applied, we choose the needed edge instances and restore their
directions.
\end{remark}

\begin{theorem}\label{thm:even-contraction}
Let $G$ be a finite simple three-edge-connected cubic bipartite graph. If
$|V(G)|\equiv0\pmod4$, then $\gpr(G)\le |V(G)|/2$.
\end{theorem}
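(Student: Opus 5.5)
The plan is to use the directed contraction along a perfect matching. A perfect matching of the contracted $4$-regular multigraph lifts to a matching of $G$ with exactly $|V(G)|/4$ edges, and I expect its endpoints to dominate $G$ automatically, with no switching needed. Proposition~\ref{prop:matching-form} then gives $\gpr(G)\le 2\cdot |V(G)|/4=|V(G)|/2$.

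\emph{Setting up the contraction.} Since $G$ is cubic bipartite, it has a perfect matching $M=\{x_vy_v:v\in I\}$ by Hall's theorem. Let $\vec H$ be the legal contraction of $(G,M)$ and $H$ its underlying multigraph. Then:
\begin{itemize}
\item $H$ has $|V(G)|/2$ vertices, which is even because $|V(G)|\equiv 0\pmod 4$.
\item $H$ is $4$-regular.
\item $H$ is loopless. A loop at $v$ would be an edge $x_vy_v\notin M$, which would be parallel to the $M$-edge $x_vy_v$; this is impossible because $G$ is simple.
\item $H$ is $4$-edge-connected by Lemma~\ref{lem:cut}, since $G$ is three-edge-connected.
\end{itemize}
Lemma~\ref{lem:uniform-even} therefore applies, and $H$ has a perfect matching $P$ with $|P|=|V(G)|/4$.

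\emph{Lifting.} Orient each edge instance of $P$ as in $\vec H$ and take the standard lift $K=\{x_uy_v: u\to v\in P\}$. Each $x_uy_v$ is an edge of $G\setminus M$. The edges of $K$ are pairwise disjoint: the $P$-edges are disjoint in $H$, and distinct vertices $u\neq v$ of $H$ give distinct vertices of $G$. So $K$ is a matching of $G$ with $|K|=|V(G)|/4$.

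Each vertex $v$ of $H$ lies in exactly one arc of $P$.
\begin{itemize}
\item If $v$ is the tail of that arc, then $x_v\in V(K)$ and $y_v\notin V(K)$.
\item If $v$ is the head, then $y_v\in V(K)$ and $x_v\notin V(K)$.
\end{itemize}
Thus $V(K)$ contains exactly one end of each $M$-edge. Every vertex outside $V(K)$ is therefore adjacent, through its $M$-edge, to a vertex of $V(K)$, so $K$ is a dominating matching.

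\emph{Conclusion and main obstacle.} Proposition~\ref{prop:matching-form} gives $\gpr(G)\le 2|K|=|V(G)|/2$. The step needing care is verifying the hypotheses of Lemma~\ref{lem:uniform-even}, namely even order, looplessness, and $4$-edge-connectivity. This is exactly where the congruence $|V(G)|\equiv0\pmod4$ and three-edge-connectivity are used; I expect no obstacle beyond that. The harder parts of the main theorem (the $2\pmod 4$ case and graphs with two-edge cuts) are not needed here.
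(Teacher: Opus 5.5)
Your proposal is correct and matches the paper's proof essentially step for step. The paper also contracts along a perfect matching, uses Lemma~\ref{lem:cut} and Lemma~\ref{lem:uniform-even} to get a perfect matching of the even-order contraction, takes its standard lift, and notes that each fixed edge has exactly one selected end. Your explicit looplessness check is a small detail the paper leaves to the remark after Definition~\ref{def:legal}.
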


\begin{proof}
Choose a perfect matching $M$ of $G$, let $\vec H$ be the legal contraction
of $(G,M)$, and let $H$ be its underlying multigraph. Set $m=|V(H)|=|V(G)|/2$. By Lemma~\ref{lem:cut}, the multigraph $H$
is $4$-edge-connected. Since $m$ is even,
Lemma~\ref{lem:uniform-even} gives a perfect matching
$P$ of $H$. Let $K$ be the standard lift of $P$. Then $|K|=m/2$.
Exactly one end of every fixed edge $x_v y_v\in M$ belongs to $V(K)$, so
the other end is dominated through that fixed edge. Hence $K$ is a
dominating matching with $|V(K)|=m=|V(G)|/2$. The result follows from Proposition~\ref{prop:matching-form}.
\end{proof}

Every cut of a $4$-regular multigraph has even size. Hence, in this
setting, $3$-edge-connectivity and $4$-edge-connectivity are equivalent.
We record the following standard consequence of Tutte's theorem in a form
that applies directly to multigraphs.

\begin{lemma}\label{lem:factor-critical}
Let $H$ be an odd-order $4$-regular $4$-edge-connected multigraph. Then
$H$ is factor-critical.
\end{lemma}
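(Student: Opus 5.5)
We fix a vertex $v$ and show that $H-v$ has a perfect matching. By Tutte's theorem, which holds for multigraphs because parallel edges do not affect which vertex sets can be covered (Remark~\ref{rem:multigraph-matching}), it suffices to show that $o(H-v-S)\le |S|$ for every $S\subseteq V(H)\setminus\{v\}$. We put $T=S\cup\{v\}$, so $|T|=|S|+1$, and we must prove $o(H-T)\le |T|-1$.

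The heart of the argument is a degree count. Let $C_1,\dots,C_q$ be the odd components of $H-T$, with $q=o(H-T)$. Each $C_i$ is a nonempty proper subset of $V(H)$, so $|\delta_H(C_i)|\ge 4$ by $4$-edge-connectivity. Every edge of $\delta_H(C_i)$ has its other end in $T$, since distinct components of $H-T$ are not joined. Because $H$ is $4$-regular, the number of edge instances with an end in $T$ and the other end outside $T$ is at most $4|T|$. Summing over the odd components therefore gives
\[
 4q\le \sum_{i=1}^q|\delta_H(C_i)|\le e_H(T,V(H)\setminus T)\le 4|T|,
\]
so $q\le |T|$.

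It remains to rule out $q=|T|$, and this is the only delicate point. The plan is a parity argument. We have $|V(H)|=|T|+\sum_i|C_i|+(\text{even components})$. Each $C_i$ is odd, and $|V(H)|$ is odd, so $|T|+q\equiv 1\pmod 2$. This forces $q\ne |T|$, since otherwise $|T|+q=2|T|$ would be even. Hence $q\le |T|-1=|S|$, which is exactly Tutte's condition for $H-v$.

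The degenerate case $T=V(H)$, where $H-T$ is empty, is trivial because then $q=0\le|S|$. Since $v$ was arbitrary, $H$ is factor-critical. The main obstacle is making sure the edge count is carried out over edge instances and that Tutte's theorem is legitimately applied to the multigraph. Both are handled by passing to the underlying simple graph for the matching statement, while keeping multiplicities for the cut bound.
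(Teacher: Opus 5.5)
Your proposal is correct and follows essentially the same route as the paper. The paper also counts the at least four boundary edges of each odd component of $H-v-S$ against the degree-four vertices of $S\cup\{v\}$ to get $q\le|S|+1$. It then uses the parity fact $q\equiv|S|\pmod 2$, which is your $|T|+q\equiv1$, to conclude $q\le|S|$.
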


\begin{proof}
Fix $v\in V(H)$. We verify Tutte's condition for $H-v$. Let
$S\subseteq V(H)\setminus\{v\}$, and let $q=o(H-v-S)$. Every odd
component of $H-v-S$ is a nonempty proper vertex set of $H$, and hence it
has at least four boundary edge instances. All these boundary edges end
in $S\cup\{v\}$. Since every vertex of $S\cup\{v\}$ has degree four,
\[
 4q\le 4|S|+4,
\]
so $q\le |S|+1$.

The graph $H-v-S$ has order congruent to $|S|$ modulo two. The parity of
its order is also the parity of its number of odd components, and therefore
$q\equiv |S|\pmod2$. The inequality $q\le |S|+1$ now gives
$q\le |S|$. By Tutte's theorem, $H-v$ has a perfect matching. Since $v$
was arbitrary, $H$ is factor-critical.
\end{proof}

\subsection{A counting lemma for the dominator tree}\label{sec:dominator}

We use the dominator terminology of Lengauer and
Tarjan~\cite{LengauerTarjan1979}.

\begin{definition}\label{def:dominator-tree}
A \emph{rooted flow graph} is a digraph with a root $s$ from which every
vertex is reachable. A vertex $u$ \emph{dominates} a vertex $v$ if every
directed $s$--$v$ path contains $u$. It \emph{strictly dominates} $v$ if
$u\ne v$. For $v\ne s$, the closest strict dominator of $v$ is its
\emph{immediate dominator}. The immediate-dominator relation forms the
\emph{dominator tree}, rooted at $s$.
\end{definition}

\begin{lemma}\label{lem:dominator-leaves}
Let $Q$ be a finite acyclic rooted flow graph with root $s$, and assume that
$Q$ has at least one nonroot vertex. Suppose that distinct arcs entering
any vertex have distinct tails. If every nonroot vertex has indegree one or
two, and $J$ nonroot vertices have indegree two, then the dominator tree has
at least $J+1$ nonroot leaves.
\end{lemma}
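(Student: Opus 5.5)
Induct on the number of nonroot vertices of $Q$, removing a sink. Since $Q$ is finite and acyclic with at least one nonroot vertex, some nonroot vertex $z$ has outdegree zero. Indeed, take a vertex that is last in a topological order. If it were the root $s$, then $s$ would have no out-arcs, so nothing else would be reachable, contradicting the existence of a nonroot vertex.

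Let $Q'=Q-z$. Every vertex other than $z$ remains reachable, because an $s$--$v$ path can end at a sink only if $v=z$. Distinct arcs entering a vertex still have distinct tails, and all indegrees of vertices of $Q'$ are unchanged. Hence $Q'$ is a rooted flow graph satisfying the hypotheses, with $J'=J-[\din(z)=2]$ vertices of indegree two.

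Removing $z$ does not change dominators among the remaining vertices, since no $s$--$v$ path with $v\neq z$ passes through $z$. So the dominator tree $T$ of $Q$ is obtained from the tree $T'$ of $Q'$ by attaching $z$ as a leaf under its immediate dominator $d$. The vertex $z$ is a leaf of $T$, because it lies on no path to any other vertex and so dominates nothing else.

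In the base case $Q'$ has only the root, so $Q$ has exactly one nonroot vertex $z$. It has indegree one, since its in-arcs must have distinct tails and the only possible tail is $s$. Thus $J=0$, and $T$ has one nonroot leaf, as required.

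For the inductive step, the induction hypothesis gives at least $J'+1$ nonroot leaves in $T'$. We now count how attaching $z$ to $d$ changes the number of nonroot leaves.

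\emph{Case $\din(z)=2$.} Let the two in-neighbours of $z$ be $a\neq b$. Since $d$ dominates $z$, and $d\neq z$, the vertex $d$ dominates both $a$ and $b$.
\begin{itemize}
\item Neither $a$ nor $b$ equals $d$. Otherwise, say $a=d$. The vertex $b$ is not dominated by $z$, because $z$ is a sink. Hence some $s$--$b$ path avoids $z$, and extending it by the arc $b\to z$ gives an $s$--$z$ path. Every vertex on this path is a dominator of $z$ or of $b$, so the path is useless for bounding $d$ by itself. What matters is the following observation. If $d$ were a leaf of $T'$, then $d$ would dominate no vertex of $Q'$ other than itself. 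But $d$ dominates $b$, so $b=d$ or $d$ is not a leaf.
\item Now suppose $a=d$ or $b=d$. We argue that then $d$ cannot be the immediate dominator when $d$ is a leaf of $T'$ and $a\neq b$. Indeed, $d$ dominates both in-neighbours, which forces $a=b=d$, contradicting distinct tails.
\item Hence $d$ is not a leaf of $T'$, or $d=s$. In either case, attaching $z$ adds a new nonroot leaf without destroying one.
\end{itemize}
So $T$ has at least $J'+2=J+1$ nonroot leaves.

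\emph{Case $\din(z)=1$.} Attaching $z$ may destroy $d$'s status as a leaf, but it adds the leaf $z$. So the number of nonroot leaves does not decrease, and it stays at least $J'+1=J+1$.

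The main obstacle is the indegree-two case: showing that $d$ is already an internal vertex of $T'$ or is the root. The key fact is that the immediate dominator of a vertex with two distinct in-neighbours strictly dominates at least one of them, so it already has a child in $T'$. I would write this carefully using the definition of domination.
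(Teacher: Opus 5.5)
Your proof is correct and is essentially the paper's argument. You run it as an induction that deletes the last vertex of a topological order, whereas the paper adds vertices in that order, and the key step is the same: the immediate dominator of an indegree-two vertex is already the root or a nonleaf (the paper says the lowest common ancestor of two distinct predecessors is a nonleaf; you say a nonroot leaf of $T'$ dominates only itself and so cannot dominate both $a\ne b$). Do tidy the indegree-two case: the heading claim ``Neither $a$ nor $b$ equals $d$'' is false in general (for $s\to a$, $a\to b$, $a\to z$, $b\to z$ one gets $d=a$) and contradicts your next bullet, but it is never used, since the leaf argument alone gives the conclusion.
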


\begin{proof}
Take a topological ordering beginning with $s$, and add the nonroot vertices
in that order. A later vertex cannot lie on a directed path to an earlier
vertex, so this process does not change domination among the vertices
already present.

Let $v$ be the next vertex, and let $P(v)$ be its set of predecessors. A
vertex $d\ne v$ dominates $v$ exactly when it dominates every vertex of
$P(v)$. Indeed, every directed $s$--$v$ path ends with an arc $u\to v$
for some $u\in P(v)$, and every directed $s$--$u$ path extends to an
$s$--$v$ path. Thus $\operatorname{Dom}(v)\setminus\{v\}
=\bigcap_{u\in P(v)}\operatorname{Dom}(u)$. Hence the immediate dominator
of $v$ is the lowest common ancestor of its predecessors in the current
dominator tree.

If $v$ has one predecessor, attaching $v$ does not reduce the number of
nonroot leaves. If $v$ has two predecessors, their tails are distinct by
assumption. Their lowest common ancestor is already a nonleaf, so the new
vertex adds one leaf and removes none. The first nonroot vertex contributes
one leaf, and every vertex of indegree two contributes one further leaf.
Thus the dominator tree has at least $J+1$ nonroot leaves.
\end{proof}

\subsection{Switching with one exposed vertex}\label{sec:one-sided}

\begin{definition}\label{def:directional-certificate}
Let $\vec H$ be a legal balanced digraph of odd order $m=2r+1$, and let
$P$ be a near-perfect matching of its underlying multigraph, exposed at $z$.
Write every edge instance of $P$ in its direction in $\vec H$ as
$P=\{t_i\to h_i:1\le i\le r\}$. Put $T_P=\{t_1,\ldots,t_r\}$ and $H_P=\{h_1,\ldots,h_r\}$. The pair
$(P,z)$ has a \emph{positive certificate} if
$\Nout_{\vec H}(z)\cap H_P\ne\varnothing$. It has a \emph{negative
certificate} if $\Nin_{\vec H}(z)\cap T_P\ne\varnothing$.
\end{definition}

\begin{remark}\label{rem:legal-necessary}
The assumption that the two in-neighbors and the two out-neighbors are
distinct is necessary. Take three vertices indexed modulo three and add
two parallel arcs $i\to i+1$ for each $i$. The digraph is loopless and
balanced $2$-in--$2$-out. Its underlying doubled triangle is $4$-regular
and $4$-edge-connected. Every near-perfect matching consists of one arc
$i\to i+1$ and exposes $i+2$, but it has neither certificate from
Definition~\ref{def:directional-certificate}.
\end{remark}

\begin{definition}
\label{def:switching-support}
For a matched arc $t_i\to h_i$, a head-switching variable $\xi_i$ has the
following meaning. The value $0$ keeps the lifted edge
$x_{t_i} y_{h_i}$, while the value $1$ replaces it by the fixed edge
$x_{h_i} y_{h_i}$. For a vertex $w$ covered by the matching, put
\[
 \lambda(w)=
 \begin{cases}
  \neg\xi_j,&w=t_j,\\
  \xi_j,&w=h_j.
 \end{cases}
\]
A tail clause has the form
$C_i=\neg\xi_i\vee L_{i,1}\vee L_{i,2}$. The first literal $\neg\xi_i$ is the \emph{guard}. The last two entries
are the \emph{support positions}. A positive or negative support
occurrence means a literal in one of these two positions. The guard is
never counted as a support occurrence. A clause of the form
$\bigvee_{j\in S_0}\xi_j$ placed before the tail clauses is the
\emph{source clause}. Its literals are represented by source arcs
$s\to j$ and are not counted in $E_+$.
\end{definition}

We use the following rule in the switching arguments.
In a tail clause $C_i$, an arc $h_j\to t_i$ contributes the positive
support $\xi_j$, while an arc $t_j\to t_i$ contributes the negative support
$\neg\xi_j$. A source arc $s\to j$ represents a prescribed positive
support $\xi_j$ and is not counted as an ordinary positive support
occurrence. In the relative setting used later, an arc from an index whose
fixed edge has already been selected contributes the constant $1$, an arc
from the exposed index contributes the constant $0$, and a deleted arc
contributes no support position. Guards are never counted as support
occurrences.

\begin{lemma}\label{lem:support-obstruction}
Let $I$ be a finite index set, let $S_0\subseteq I$ be nonempty, and
consider a Boolean formula
\[
 \Psi=\left(\bigvee_{j\in S_0}\xi_j\right)
 \wedge\bigwedge_{i\in I}
 \left(\neg\xi_i\vee\bigvee_{\ell\in\Lambda_i}\ell\right),
\]
where every member of $\Lambda_i$ is one of
$0,1,\xi_j,\neg\xi_j$. The first literal $\neg\xi_i$ is the guard and the
members of $\Lambda_i$ are the support positions.

Build a directed support graph with a source $s$: add $s\to j$ for
$j\in S_0$, and add $i\to j$ for every positive support occurrence
$\xi_j\in\Lambda_i$. A negative support occurrence
$\neg\xi_j\in\Lambda_i$ is denoted by $i\to\overline j$. Let $Q$ be the
subgraph induced by $s$ and the variables reachable from $s$ by positive
arcs. A tail clause indexed by $i$ is called \emph{reachable} if $i$ is a
vertex of $Q$. If $\Psi$ is not satisfiable, then the following statements
hold.
\begin{enumerate}[label=\textnormal{(\roman*)}]
\item The positive support graph $Q$ is acyclic.
\item No reachable tail clause contains the constant support $1$.
\item If $i\to\overline j$ is a negative support with $i$ reachable, then
$j$ lies on every directed $s$--$i$ path. In particular, $j$ is reachable
and dominates $i$ in $Q$; if $j\ne i$, then $j$ strictly dominates $i$ and
is a nonleaf of the dominator tree.
\item If each variable occurs as a negative support in at most one support
position, then the negative targets obtained from reachable tail clauses are
pairwise distinct.
\end{enumerate}
\end{lemma}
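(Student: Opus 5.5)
Each of the four statements will be proved by exhibiting a satisfying assignment of $\Psi$ whenever the statement fails, which contradicts the hypothesis. The basic tool is the assignment $\xi_j=1$ for $j$ in a chosen set $A\subseteq I$ and $\xi_j=0$ otherwise. Under this assignment a tail clause $C_i$ with $i\notin A$ is satisfied through its guard. So only clauses indexed by $A$ must be checked, together with the source clause, which holds as soon as $A\cap S_0\neq\varnothing$.

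For (ii), (iii) and (iv) I take $A$ to be the vertex set of a directed $s$--$i$ path in $Q$ (minus $s$), or a closure of such paths. Along a path $s\to j_1\to\cdots\to j_k=i$, each clause $C_{j_\ell}$ with $\ell<k$ is satisfied by the positive support $\xi_{j_{\ell+1}}$, which is true. The source clause is satisfied by $\xi_{j_1}$. The remaining clause is $C_i$, so I must make it true as well.

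\textbf{(ii).} If a reachable $C_i$ contains the constant $1$, it is already satisfied, so $\Psi$ is satisfiable. This is a contradiction.

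\textbf{(iii).} Suppose $i\to\overline j$ and some $s$--$i$ path avoids $j$. Take $A$ to be that path. Then $\xi_j=0$, so $\neg\xi_j$ satisfies $C_i$. The possibility $j=i$ needs care: there $C_i$ contains $\neg\xi_i$ as a support, which is false when $i\in A$. But $j=i$ lies on every $s$--$i$ path trivially, so no counterexample path exists in that case. Hence $j$ lies on every $s$--$i$ path, so $j$ is reachable and dominates $i$. If $j\neq i$, then $i$ is a proper descendant of $j$ in the dominator tree, so $j$ is not a leaf.

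\textbf{(i).} Suppose $Q$ has a directed cycle $Z$. Choose a path $p$ from $s$ to a vertex of $Z$, and let $A$ consist of the vertices of $p$ together with $Z$. Every clause of $A$ then has a true positive successor in $A$ inside $p\cup Z$, and the source clause is satisfied. So $\Psi$ is satisfiable, a contradiction. Note that this argument needs no negative literals at all.

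\textbf{(iv).} Suppose two reachable clauses $C_i$ and $C_{i'}$ have the same negative target $j$. Since each variable occurs as a negative support in at most one support position, these two occurrences are the same support position. Hence $i=i'$ and the positions coincide, so the targets are pairwise distinct. The intended reading counts occurrences, so (iv) is nearly immediate from the hypothesis. I would state it explicitly as a bookkeeping consequence.

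\textbf{Main obstacle.} The main obstacle is the strengthened version underlying (iii). A single path $A$ makes every clause except $C_i$ true only when no clause on the path fails, and it does: each clause on the path has a true positive successor. So no further difficulty arises, and I would double-check only the subtlety $j=i$ handled above.
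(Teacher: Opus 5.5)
Your proposal is correct and follows the paper's proof. For (i)--(iii) you set to one exactly the variables on a directed path from $s$ (adding the cycle in case (i)), so that false variables satisfy their guards, each true variable is supported by the next true one, and the last clause is satisfied by the cycle, by the constant $1$, or by the false-variable literal $\neg\xi_j$; (iv) is read directly off the hypothesis, and your remark that the case $j=i$ in (iii) is vacuous makes explicit a detail the paper leaves implicit.
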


\begin{proof}
Suppose first that $Q$ contains a directed cycle. Choose a directed path
from $s$ to the cycle that first meets the cycle at its last vertex, and
set to one exactly the variables on this path and cycle. The source clause
is true. Every true variable on the path or cycle has a positive support
at the next true variable, while every false variable satisfies its guard.
Thus the assignment satisfies $\Psi$.

If a reachable tail clause contains the support $1$, set to one exactly the
variables on a directed path from $s$ to that clause. The last clause is
satisfied by the constant support and every earlier true clause is
supported by the next variable on the path. This also satisfies $\Psi$.

Finally, suppose that $i\to\overline j$ is a negative support and that some
directed $s$--$i$ path avoids $j$. Setting to one exactly the variables on
that path satisfies the last clause by $\neg\xi_j$ and every earlier true
clause by the next positive support. Thus, if $\Psi$ is not satisfiable, none of the three cases above can
occur. This proves the first three statements. The last statement follows
from the assumption that each variable appears as a negative support in at
most one position.
\end{proof}

\begin{theorem}\label{thm:one-sided-switch}
Let $\vec H$ be a legal balanced digraph of odd order
$m=2r+1$, and let $P$ be a near-perfect matching of its underlying
multigraph, exposed at $z$. If $(P,z)$ has a positive certificate or a
negative certificate, then $L(\vec H)$ has a dominating matching with $r$
edges. Consequently, $L(\vec H)$ has a paired dominating set of size
$2r=m-1$. Moreover, the dominating matching can be chosen to use only
fixed edges and lifts of edges of $P$.
\end{theorem}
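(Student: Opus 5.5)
The plan is to reduce the statement to the satisfiability of a Boolean formula $\Psi$ of the form in Lemma~\ref{lem:support-obstruction}, and then to rule out unsatisfiability by a counting argument in the dominator tree (Lemma~\ref{lem:dominator-leaves}).

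\textbf{Step 1: reduce to the positive certificate.} Reversing all arcs of $\vec H$ gives again a legal balanced digraph. Its canonical lift is $L(\vec H)$ with the roles of $X$ and $Y$ exchanged. Under the reversal, tails and heads of $P$ swap, so a negative certificate becomes a positive one. Lifts of edges of $P$ and fixed edges correspond to each other. Hence it suffices to treat the case $\Nout_{\vec H}(z)\cap H_P\ne\varnothing$.

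\textbf{Step 2: encode head switches as a formula.} Start from the standard lift and allow only head switches $\xi_i$, as in Definition~\ref{def:switching-support}. Then $x_u$ is selected exactly when $\lambda(u)$ is true, and every $y_{h_j}$ is always selected. I check domination vertex by vertex.
\begin{itemize}
\item Each $x_{h_j}$ and each $y_{h_j}$ is dominated or selected in all cases.
\item Each $x_{t_j}$ is dominated by $y_{h_j}$.
\item Each $y_{t_j}$ with $\xi_j=0$ is dominated via its fixed edge.
\item The vertex $x_z$ is dominated through an out-neighbor in $H_P$; this is exactly the positive certificate.
\end{itemize}
The remaining requirements are as follows.
\begin{itemize}
\item For $y_z$: some in-neighbor $u$ of $z$ has $\lambda(u)$ true.
\item For each switched $y_{t_i}$: the clause $C_i=\neg\xi_i\vee\lambda(u_1)\vee\lambda(u_2)$ holds, where $u_1,u_2$ are the in-neighbors of $t_i$. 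An in-neighbor equal to $z$ contributes the constant $0$.
\end{itemize}
If some in-neighbor of $z$ is a tail $t_j$, the all-zero assignment already works. So I may assume both in-neighbors of $z$ are heads. By legality they are distinct, say $h_j,h_k$. This yields a formula $\Psi$ as in Lemma~\ref{lem:support-obstruction} with $S_0=\{j,k\}$ nonempty. A satisfying assignment gives a dominating matching of $r$ edges that uses only fixed edges and lifts of edges of $P$, as required.

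\textbf{Step 3: rule out unsatisfiability by counting.} Suppose $\Psi$ is unsatisfiable. Let $Q$ be the positive support graph and $q$ its number of nonroot vertices. Each reachable clause $C_i$ has exactly two support positions, one for each in-neighbor of $t_i$; this gives $2q$ positions in total. Each position is of one of three kinds.
\begin{itemize}
\item A positive support $\xi_j$, coming from an arc $h_j\to t_i$. It is an arc of $Q$ into $j$. Distinct arcs into $j$ have distinct tails, because $h_j$ has distinct out-neighbors.
\item A negative support $\neg\xi_j$, coming from an arc $t_j\to t_i$ with $j\ne i$. Since one out-arc of $t_j$ goes to $h_j$, each $\neg\xi_j$ occurs in at most one position. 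By Lemma~\ref{lem:support-obstruction}(iii),(iv), these targets are distinct nonleaves of the dominator tree.
\item The constant $0$, coming from $z$. Since one out-neighbor of $z$ lies in $H_P$, at most one position is of this kind.
\end{itemize}
By part (i), $Q$ is acyclic, and by part (ii) no reachable clause contains the constant $1$.

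Let $J$ be the excess $\sum_v(\din_Q(v)-1)$ over nonroot $v$. The number of positive positions is $q+J-|S_0|$, since source arcs are not support positions. Lemma~\ref{lem:dominator-leaves} gives at least $J+1$ leaves, and its proof gives the same bound when indegree $3$ occurs: one source arc plus two arcs from $h_j$. Hence there are at most $q-J-1$ nonroot nonleaves. Adding the three kinds,
\[
2q\le (q+J-|S_0|)+(q-J-1)+1=2q-|S_0|<2q,
\]
which is a contradiction.

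\textbf{Main obstacle.} The hard part is the bookkeeping in Step 3. I must verify that every support position of a reachable clause falls into exactly one of the three counted kinds. I must also check the degenerate arcs: an arc $h_i\to t_i$ creates a loop in $Q$, which is a cycle and is excluded by (i), and the lemma must be extended to indegree three. Finally, the leaf bound must be applied to $Q$ with the correct indegrees.
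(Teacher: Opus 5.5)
Your proposal is correct and is essentially the paper's proof. It uses the same head-switching formula with a source clause over the two head in-neighbors of $z$, the same reachable support graph, and the same count of positive, negative and zero support positions against the leaf bound of Lemma~\ref{lem:dominator-leaves}.

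The one thing to fix is your aside about indegree three. It cannot occur: every arc entering $j$ in $Q$, source arc included, records a different out-arc of $h_j$ (a source arc records $h_j\to z$), and $h_j$ has only two out-arcs. So every reachable variable has indegree one or two, and your excess $J$ is just the number of indegree-two variables, as in the paper. You should not claim that the leaf bound extends to indegree three, because that claim is false in general: a vertex whose three predecessors lie on one dominator chain adds one leaf but two units of excess.
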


\begin{proof}
We prove the positive case. The negative case follows by reversing all
arcs and interchanging the two partite classes.

\medskip
\noindent\emph{Step 1: the switching formula.}
Write the matched edge instances of $P$ in their directions as
$P=\{t_i\to h_i:1\le i\le r\}$. For each $i$, choose a variable
$\xi_i\in\{0,1\}$. If $\xi_i=0$, select the lifted edge
$x_{t_i} y_{h_i}$; if $\xi_i=1$, replace it by the fixed edge
$x_{h_i} y_{h_i}$, as in Figure~\ref{fig:lift-switches}. The selected
edges form a matching $K_\xi$ of size $r$.

The selected $Y$-indices are always $H_P$, and the selected $X$-indices
are
\[
 X_\xi=\{t_i:\xi_i=0\}\cup\{h_i:\xi_i=1\}.
\]
No lift vertex is selected at an index in
$Z_\xi=\{z\}\cup\{t_i:\xi_i=1\}$. If $\xi_i=1$, then
$x_{t_i}$ is dominated by the selected vertex $y_{h_i}$ through the edge
$x_{t_i} y_{h_i}$, so only $y_{t_i}$ still needs a selected $X$-neighbor.
The positive certificate gives an arc from $z$ to a matched head, and
therefore the corresponding selected $Y$-vertex dominates $x_z$. Thus
only $y_z$ remains to be dominated at the exposed index.

If an in-neighbor of $z$ lies in $T_P$, the all-zero assignment selects
that tail on the $X$-side and works. We may therefore write
$\Nin_{\vec H}(z)=\{h_p,h_q\}$, where $p\ne q$. Extend the literal map
from Definition~\ref{def:switching-support} by putting $\lambda(z)=0$.
If $u_i$ and $v_i$ are the two in-neighbors of $t_i$, the remaining
domination conditions are exactly
\begin{equation}
 \Phi=(\xi_p\vee\xi_q)\wedge
 \bigwedge_{i=1}^{r}
 \bigl(\neg\xi_i\vee\lambda(u_i)\vee\lambda(v_i)\bigr).
 \label{eq:one-sided-formula}
\end{equation}
It remains to prove that the formula in~\eqref{eq:one-sided-formula} is satisfiable.

\medskip
\noindent\emph{Step 2: consequences of an obstruction.}
Build the support graph from Lemma~\ref{lem:support-obstruction}, with
source arcs $s\to p$ and $s\to q$. Let $Q$ be its reachable positive
part, and let $\nu$ be the number of reachable variables. Suppose that
$\Phi$ is not satisfiable. Then $Q$ is acyclic, no reachable clause
contains the constant support $1$, and every negative target is reachable
and dominates the index of its clause. This domination is strict: a
negative support $\neg\xi_i$ in clause $i$ would come from a loop
$t_i\to t_i$.

\medskip
\noindent\emph{Step 3: counting positive and negative supports.}
Let $E_+$ and $E_-$ be the numbers of positive and negative support
occurrences in the reachable tail clauses. A positive support arc entering
variable $j$ records an arc leaving $h_j$: a source arc records
$h_j\to z$, and an ordinary support arc records $h_j\to t_i$. The two
out-neighbors of $h_j$ are distinct. Hence distinct arcs entering a
variable have distinct tails, and every reachable variable has indegree
one or two in $Q$. If $J$ of the reachable variables have indegree two,
then counting all arcs entering variable vertices gives
\begin{equation}
 E_++2=\nu+J.
 \label{eq:one-sided-indegree}
\end{equation}

Each reachable tail clause has two support positions. A zero support can
only arise from an arc $z\to t_i$. At least one out-arc of $z$ gives the
positive certificate and ends at a matched head, so at most one support
position in the reachable clauses is zero. Every positive target is
reachable by definition, and every negative target is reachable by
Lemma~\ref{lem:support-obstruction}. Therefore
\begin{equation}
 E_++E_-\ge 2\nu-1.
 \label{eq:one-sided-supports}
\end{equation}
Combining \eqref{eq:one-sided-indegree} and
\eqref{eq:one-sided-supports} yields
\begin{equation}
 E_-\ge \nu+1-J.
 \label{eq:one-sided-negative}
\end{equation}

\medskip
\noindent\emph{Step 4: the dominator-tree contradiction.}
A negative support with target $j$ comes from an arc $t_j\to t_i$. The
matched arc $t_j\to h_j$ already uses one of the two out-arcs of $t_j$.
Thus each variable occurs as a negative target at most once. The $E_-$
negative targets are therefore distinct nonleaves of the dominator tree.
By Lemma~\ref{lem:dominator-leaves}, this tree has at least $J+1$
variable leaves and hence at most $\nu-J-1$ nonleaf variables. This
contradicts \eqref{eq:one-sided-negative}. Hence $\Phi$ is satisfiable.

Choose a satisfying assignment. The clauses dominate both lift vertices
at every index in $Z_\xi$. At every remaining index, at least one lift
vertex is selected. Whenever exactly one lift vertex at an index is
selected, the other is dominated through the fixed edge. Thus $K_\xi$ is
a dominating matching with $r$ edges. Its endpoint set has size
$|V(K_\xi)|=2r=m-1$, and Proposition~\ref{prop:matching-form} completes
the proof.
\end{proof}

\subsection{A near-perfect matching with a suitable direction}\label{sec:directional}

We use the standard deficiency and barrier terminology from matching
theory; see Akiyama and Kano~\cite{AkiyamaKano2011} or Lov\'asz and
Plummer~\cite{LovaszPlummer1986}.

\begin{definition}\label{def:barrier}
For a loopless multigraph $J$, put
$\defi(J)=\max\{o(J-S)-|S|:S\subseteq V(J)\}$. This number is the
\emph{deficiency} of $J$. A set $S$ attaining the maximum is a
\emph{barrier}. It is a \emph{maximal barrier} if it is maximal under
inclusion.
\end{definition}

We record the standard maximal-barrier property in a form that also
applies to loopless multigraphs. A direct proof is included because the
result is used in the directional matching argument below.

\begin{lemma}[Maximal-barrier property]\label{lem:maximal-barrier}
Let $J$ be a loopless multigraph, and let $S$ be an inclusion-maximal
barrier of $J$. Every component of $J-S$ is factor-critical.
\end{lemma}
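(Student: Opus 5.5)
We argue by contradiction. Let $S$ be an inclusion-maximal barrier of $J$, and suppose some component $C$ of $J-S$ is not factor-critical. By Remark~\ref{rem:multigraph-matching} we may replace each parallel class by a single edge: this changes neither $o(\cdot)$, components, deficiency, barriers, nor factor-criticality. So we work in a simple graph and use Tutte's theorem and the Tutte--Berge formula for it. The goal is to find a set $T$ with $T\cap V(J-S)\subseteq V(C)$ and $T\neq\varnothing$ such that $S\cup T$ is again a barrier. This contradicts maximality.

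\emph{Case 1: $C$ is even.} Pick any vertex $v\in V(C)$. Then $C-v$ has odd order, so it has at least one odd component. Put $T=\{v\}$. Removing $S\cup\{v\}$ keeps every component of $J-S$ other than $C$ and adds at least one odd component from $C-v$. Hence $o(J-S-v)-|S|-1\ge o(J-S)-|S|=\defi(J)$. Equality holds by maximality of $\defi$, so $S\cup\{v\}$ is a barrier, which is a contradiction.

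\emph{Case 2: $C$ is odd and not factor-critical.} Choose $v\in V(C)$ such that $C-v$ has no perfect matching. By Tutte's theorem applied to the even-order graph $C-v$, there is $T'\subseteq V(C)\setminus\{v\}$ with $o(C-v-T')>|T'|$. Since $|V(C-v)|$ is even, $o(C-v-T')\equiv|T'|\pmod 2$, so in fact $o(C-v-T')\ge|T'|+2$. Set $T=T'\cup\{v\}$, which satisfies $|T|=|T'|+1$. Passing from $S$ to $S\cup T$, we lose the odd component $C$ and gain at least $|T'|+2=|T|+1$ odd components. So the quantity $o-|\cdot|$ changes by at least $-1+(|T|+1)-|T|=0$. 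Thus $S\cup T$ is again a barrier, and it strictly contains $S$. This is a contradiction.

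The one delicate point is the parity step in Case~2, which upgrades $o>|T'|$ to $o\ge|T'|+2$. Without it we would only get $o(J-(S\cup T))-|S\cup T|\ge\defi(J)-1$, which is not enough to contradict maximality. We also need the multigraph-to-simple reduction to preserve every count involved. Both issues are checked by the parity identity, already used in Lemma~\ref{lem:factor-critical}, and by Remark~\ref{rem:multigraph-matching}.
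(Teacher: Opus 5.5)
Your proof is correct and follows the paper's argument essentially step for step: an even component yields the larger barrier $S\cup\{v\}$, and an odd component that is not factor-critical yields the larger barrier $S\cup\{v\}\cup T'$, using Tutte's theorem together with the parity upgrade $o(C-v-T')\ge|T'|+2$. The reduction to simple graphs through Remark~\ref{rem:multigraph-matching} is harmless but unnecessary, since Tutte's theorem applies directly to loopless multigraphs, and this is how the paper uses it.
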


\begin{proof}
Put
\[
 d=o(J-S)-|S|=\defi(J).
\]
We first show that every component of $J-S$ has odd order. Suppose that
$C$ is an even component of $J-S$, and choose $v\in V(C)$. The graph
$C-v$ has odd order and therefore has at least one odd component. Hence
\[
 o\bigl(J-(S\cup\{v\})\bigr)-|S\cup\{v\}|
 \ge o(J-S)+1-|S|-1=d.
\]
By the definition of deficiency, equality holds, so $S\cup\{v\}$ is a
barrier. This contradicts the inclusion-maximality of $S$. Thus every
component of $J-S$ has odd order.

Let $C$ be a component of $J-S$, and fix $v\in V(C)$. Suppose that
$C-v$ has no perfect matching. By Tutte's theorem, there is a set
$T\subseteq V(C)\setminus\{v\}$ such that
\[
 o(C-v-T)>|T|.
\]
Since $|V(C)|$ is odd, the order of $C-v-T$ has the same parity as
$|T|$. The parity of a graph order is the parity of its number of odd
components. Hence $o(C-v-T)\equiv |T|\pmod2$, and consequently
\[
 o(C-v-T)\ge |T|+2.
\]
Set $S'=S\cup\{v\}\cup T$. Every component of $J-S$ other than $C$
remains an odd component of $J-S'$. It follows that
\[
 \begin{aligned}
 o(J-S')-|S'|
 &\ge o(J-S)-1+(|T|+2)-(|S|+|T|+1)\\
 &=d.
 \end{aligned}
\]
Again equality must hold, so $S'$ is a barrier properly containing $S$,
a contradiction. Hence $C-v$ has a perfect matching for every
$v\in V(C)$, and $C$ is factor-critical.
\end{proof}

\begin{lemma}\label{lem:directional-certificate}
Let $\vec H$ be a legal balanced digraph whose underlying
multigraph $H$ has odd order and is $4$-regular and $4$-edge-connected.
For every prescribed arc $a\to v$, the multigraph $H$ has a near-perfect
matching that avoids this arc and has a positive certificate or a negative
certificate at its exposed vertex.
\end{lemma}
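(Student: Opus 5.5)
The plan is to reduce the statement to a factor-criticality property of $H$ minus one arc, and then repair the direction of the matching by local switches along arcs. Write $e$ for the edge instance of $H$ coming from the prescribed arc $a\to v$, and put $H'=H-e$.

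\emph{Step 1: $H'$ is factor-critical.} I would rerun the proof of Lemma~\ref{lem:factor-critical} for $H'$. Fix $w\in V(H')$ and $S\subseteq V(H')\setminus\{w\}$, and let $q=o(H'-w-S)$. Every odd component of $H'-w-S$ is a nonempty proper vertex set of $H$, so in $H$ it has at least four boundary edge instances. Deleting $e$ lowers the total boundary count over all these components by at most two. Every surviving boundary edge ends in $S\cup\{w\}$, so
\[
 4q-2\le 4|S|+4,
\]
which gives $q\le |S|+1$. The parity argument of Lemma~\ref{lem:factor-critical} then yields $q\le |S|$. Hence for every vertex $z$ there is a near-perfect matching of $H$ that avoids the prescribed arc and exposes $z$.

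\emph{Step 2: failures at the exposed vertex are rigid.} Among all near-perfect matchings of $H'$, I would pick $P$, exposed at $z$, and assume it has no certificate. With the notation of Definition~\ref{def:directional-certificate}, this means both out-neighbors of $z$ lie in $T_P$ and both in-neighbors of $z$ lie in $H_P$. Write $\Nout(z)=\{w_1,w_2\}$; these are distinct by legality, and $w_i$ is matched by an arc $w_i\to p_i$. At least one of $z\to w_1$ and $z\to w_2$ is not the deleted arc, say $z\to w_1$.

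Consider the switched matching
\[
 P_1=P-\{w_1\to p_1\}+\{z\to w_1\}.
\]
It avoids $e$ and is exposed at $p_1$, and in $P_1$ the vertex $w_1$ has become a head. Since $w_1$ is an in-neighbor of $p_1$, a negative certificate is impossible there. So either $P_1$ has a positive certificate at $p_1$, and we are done, or the failure condition holds again at $p_1$. There are symmetric moves using the in-arcs of $z$: if $u\to z$ is an in-arc with $u$ a head, matched by $u'\to u$, replace $u'\to u$ by $u\to z$ and expose $u'$.

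Iterating the moves produces a walk of exposed vertices in which every matching avoids $e$ and every exposed vertex fails. Since there are finitely many pairs $(P,z)$, some pair must repeat. The repeated segment then gives a closed alternating structure, and under global failure every vertex it visits has its two out-arcs into tails and its two in-arcs from heads.

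\emph{Step 3: contradiction (the main obstacle).} The hard part is turning this rigid configuration into a contradiction. Remark~\ref{rem:legal-necessary} shows that legality must be used here and that $4$-edge-connectivity alone is not enough. My first attempt would be the following. Let $W$ be the set of vertices reachable from $z$ by the switching moves, together with the matched partners of the vertices reached. Under global failure, I expect each vertex of $W$ to send both out-arcs to $W$ and to receive both in-arcs from $W$; the rule for the deleted arc has to be tracked separately. Then either $W=V(H)$, or $\delta_H(W)$ contains at most the edge $e$, and the latter contradicts $4$-edge-connectivity.

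In the case $W=V(H)$, I would compare tails and heads. Every vertex is at some stage a failing exposed vertex, and the number of arcs from the current head set into its tail-neighbours would be counted in two ways. The aim is to force two parallel arcs between a matched pair, say a double arc $w_1\Rightarrow p_1$, exactly as in the doubled triangle of Remark~\ref{rem:legal-necessary}. This violates legality. If this counting does not close the argument directly, my fallback is to choose $z$ extremally, for example $z=a$ or $z=v$, and use the freedom in Step~1 to show that the two out-neighbors of $z$ cannot both be tails in every matching exposing $z$.
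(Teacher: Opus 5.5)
Your Step~1 is correct: the deleted edge lowers the total boundary count by at most two, so $4q-2\le 4|S|+4$, and parity gives $q\le|S|$. But factor-criticality of $H-e$ says nothing about whether the neighbors of the exposed vertex are heads or tails, and that is the whole content of the lemma. The argument stops where the real work begins, because Step~3 is a list of expectations rather than a proof.

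\begin{itemize}
\item Nothing you state implies that the set $W$ reached by your moves is closed under in- and out-arcs. The failure hypothesis only constrains the neighbors of the vertex exposed at the current moment, and the matching changes along the walk. A vertex such as $w_1$, which enters $W$ only as a partner, is never guaranteed to be exposed, so its second out-arc is unconstrained.
\item The moves are reversible: from $(P_1,p_1)$ the in-arc $w_1\to p_1$ leads straight back to $(P,z)$. So a repeated pair carries no information.
\item The count that is supposed to produce a double arc when $W=V(H)$ is not specified.
\item Legality, which Remark~\ref{rem:legal-necessary} shows to be indispensable, is never actually used.
\end{itemize}
There is also a minor slip: in $P_1$ a negative certificate at $p_1$ is not excluded, since the other in-neighbor of $p_1$ may be a tail.

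The missing idea is the one your fallback only gestures at: use the prescribed arc itself as the certificate. Let $b\to v$ be the other in-arc of $v$ and $a\to\alpha$ the other out-arc of $a$; legality gives $a\ne b$ and $\alpha\ne v$.

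\begin{itemize}
\item If $H-\{a,v,b\}$ has a perfect matching, adding $b\to v$ gives a near-perfect matching exposed at $a$ in which $v$ is a head. Then $a\to v$ is a positive certificate.
\item Otherwise you need a perfect matching of $H-v$ containing $a\to\alpha$. Then $a$ is a tail and $a\to v$ is a negative certificate.
\end{itemize}

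The paper gets this matching from an inclusion-maximal barrier $S$ of $H-\{a,v,b\}$. Since $|\delta_H(\{a,v,b\})|\le 8$ and every odd component has at least four boundary edges, all counting inequalities are tight. Hence:
\begin{itemize}
\item the number of odd components is $|S|+2$, and each has exactly four boundary edges;
\item $S\cup\{a,b\}$ is independent;
\item there are no even components, and the components are factor-critical.
\end{itemize}

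Perfect matchings of $H-v$ then correspond to perfect matchings of the bipartite graph $J$ between $S\cup\{a,b\}$ and the components. A Hall obstruction to using the edge $a\alpha$ would yield a nonempty proper vertex set of $H$ with at most two boundary edges, contradicting four-edge-connectivity. Your Step~1 does give a matching exposing $a$ or $v$, but with no control over how $v$ or $a$ is matched. An argument of this strength is what must replace Step~3.
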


\begin{proof}
Fix the arc $a\to v$, and let $b\to v$ be the other arc entering $v$.
Since the digraph is legal, $a\ne b$. Put $F=\{a,v,b\}$.

First suppose that $H-F$ has a perfect matching $Q$. Then
$P=Q\cup\{b\to v\}$ is a near-perfect matching exposed at $a$, and it
avoids $a\to v$. The
arc $a\to v$ leaves the exposed vertex and enters a matched head. Thus
$(P,a)$ has a positive certificate.

Now suppose that $H-F$ has no perfect matching. Let $S$ be an
inclusion-maximal barrier of $H-F$. Write $s=|S|$, and let $q$ be the
number of odd components of $H-F-S$. By Tutte's theorem,
$\defi(H-F)>0$. Moreover, $H-F$ has even order, so
$o(H-F-S)-|S|$ is even for every $S$. Therefore
$q-s=\defi(H-F)\ge2$. Let $\rho=|E(H[F])|$. The two edges $av$ and $bv$ are distinct, so
$\rho\ge2$ and $|\delta_H(F)|=12-2\rho\le8$. Every odd component of $H-F-S$ has at least four boundary edges, all ending
in $S\cup F$. Hence $4s+8\le4q\le4s+|\delta_H(F)|\le4s+8$.
All three inequalities are equalities. Therefore $q=s+2$ and $\rho=2$,
and each odd component has exactly four boundary edges.

Let $O$ and $E$ be the unions of the odd and even components of $H-F-S$.
The total boundary of the odd components is $|E_H(O,S\cup F)|$, so
$|E_H(O,S\cup F)|=4q=4s+|\delta_H(F)|$. Expanding the degree sum over
$S$ and the boundary of $F$, and using that distinct components of
$H-F-S$ have no edges between them, gives
\[
 \begin{aligned}
 4s+|\delta_H(F)|-|E_H(O,S\cup F)|
 ={}&2|E_H(S)|+2|E_H(S,F)|\\
 &+|E_H(S,E)|+|E_H(F,E)|.
 \end{aligned}
\]
Every term on the right is zero. Thus $S$ is independent,
$E_H(S,F)=\varnothing$, and no even component has an edge to $S\cup F$.
Since $H$ is connected, there are no even components. By
Lemma~\ref{lem:maximal-barrier}, every component of $H-F-S$ is
factor-critical. Let $\mathcal Q$ be the family of these components.

Put $L=S\cup\{a,b\}$. Since $\rho=2$, the only edges in $H[F]$ are
$av$ and $bv$. Hence $a$ and $b$ are nonadjacent. These equalities also
show that $L$ is independent and $|L|=s+2=q=|\mathcal Q|$. Form a bipartite multigraph $J$ with parts $L$ and $\mathcal Q$. Each edge
instance of $H$ from $\ell\in L$ to a component $Q\in\mathcal Q$ gives one
edge of $J$, labeled by that edge instance.

By Lemma~\ref{lem:factor-critical}, $H-v$ has a perfect matching $M_v$.
Every component in $\mathcal Q$ has odd order, so $M_v$ uses an edge from
that component to $L$. Since $|L|=|\mathcal Q|$, it uses exactly one such
edge at each vertex of $L$ and at each component. The corresponding edges
form a perfect matching of $J$.

Let $a\to\alpha$ be the out-arc of $a$ different from $a\to v$. The
vertex $\alpha$ lies in some component $Q_0\in\mathcal Q$. Let
$e_{a\alpha}$ be the corresponding edge of $J$. We claim that $J$ has a
perfect matching containing $e_{a\alpha}$.

Suppose not. Then $J-a-Q_0$ has no perfect matching. By Hall's theorem,
there is a nonempty set $U\subseteq L\setminus\{a\}$ such that
$|N_{J-a-Q_0}(U)|\le |U|-1$. Since $J$ has a perfect matching,
$|N_J(U)|\ge |U|$. Deleting $Q_0$ changes this neighborhood by at most
one vertex. Therefore, for $N=N_J(U)$, we have $|N|=|U|$ and $Q_0\in N$.
Let $\kappa=1$ when $b\in U$, and let $\kappa=0$ otherwise. Each vertex
of $U\cap S$ sends four edges to $N$, while $b$ sends three. Thus
$e_J(U,N)=4|U|-\kappa$. Put
$X_U=U\cup\bigcup_{Q\in N}V(Q)$. We obtain
\[
 \begin{aligned}
 |\delta_H(X_U)|
 &=4|U|+4|N|-2e_J(U,N)\\
 &=2\kappa\le2.
 \end{aligned}
\]
The set $X_U$ is nonempty and proper because $v\notin X_U$. This
contradicts four-edge-connectivity and proves the claim.

Choose a perfect matching $M_J$ of $J$ containing $e_{a\alpha}$. For each
$Q\in\mathcal Q$, let $\ell_Q u_Q$ be the edge of $H$ labeling the edge of
$M_J$ incident with $Q$, where $\ell_Q\in L$ and $u_Q\in V(Q)$. Since
$Q$ is factor-critical, choose a perfect matching $M_Q$ of $Q-u_Q$. Then
$P=\{\ell_Q u_Q:Q\in\mathcal Q\}\cup\bigcup_{Q\in\mathcal Q}M_Q$ is a
perfect matching of $H-v$. It contains the arc $a\to\alpha$ and
avoids $a\to v$. Thus $a\in T_P$, and the arc $a\to v$ gives a negative
certificate at the exposed vertex $v$.
\end{proof}

\begin{theorem}\label{thm:intro-three-edge}
Let $G$ be a finite simple three-edge-connected cubic bipartite graph of
order $n$. Then $\gpr(G)\le n/2$. If $n\equiv2\pmod4$, then
$\gpr(G)\le n/2-1$.
\end{theorem}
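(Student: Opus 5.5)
The plan is to split according to the residue of $n$ modulo $4$. Since $G$ is cubic bipartite, $n$ is even, so $n\equiv0$ or $2\pmod4$. When $n\equiv0\pmod4$, Theorem~\ref{thm:even-contraction} already gives $\gpr(G)\le n/2$, and nothing further is required. It therefore remains to prove that $\gpr(G)\le n/2-1$ when $n\equiv2\pmod4$, because this stronger bound also implies $\gpr(G)\le n/2$.

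Now assume $n\equiv2\pmod4$. Choose a perfect matching $M$ of $G$, which exists by Hall's theorem. Let $\vec H$ be the legal contraction of $(G,M)$ and let $H$ be its underlying multigraph. As noted after Definition~\ref{def:legal}, $\vec H$ is a legal balanced digraph and $G=L(\vec H)$. Its order is $m=n/2$, which is odd, so write $m=2r+1$. The multigraph $H$ is $4$-regular, and by Lemma~\ref{lem:cut} it is $4$-edge-connected because $G$ is three-edge-connected.

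Next, fix any arc $a\to v$ of $\vec H$; an arc exists since every vertex has outdegree two. Lemma~\ref{lem:directional-certificate} applies to $\vec H$ and this arc. It produces a near-perfect matching $P$ of $H$, exposed at some vertex $z$, such that $(P,z)$ has a positive certificate or a negative certificate. The fact that $P$ avoids the prescribed arc is not needed here.

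Theorem~\ref{thm:one-sided-switch} then gives a dominating matching of $L(\vec H)=G$ with $r$ edges, and hence a paired dominating set of size $2r=m-1=n/2-1$. Therefore $\gpr(G)\le n/2-1$.

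All the substantial work is contained in Lemma~\ref{lem:directional-certificate} and Theorem~\ref{thm:one-sided-switch}, which are already proved. The only step that needs care is checking their hypotheses: $\vec H$ must be legal and balanced, and $H$ must have odd order and be $4$-regular and $4$-edge-connected. These properties follow from the remarks after Definition~\ref{def:legal}, from Lemma~\ref{lem:cut}, and from the simplicity of $G$, which ensures that distinct arcs at a vertex have distinct ends.
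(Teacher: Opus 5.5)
Your proposal is correct and follows the paper's proof essentially verbatim. The case $n\equiv0\pmod4$ is Theorem~\ref{thm:even-contraction}. The case $n\equiv2\pmod4$ contracts a perfect matching, invokes Lemma~\ref{lem:cut} and Lemma~\ref{lem:directional-certificate} at an arbitrary arc, and finishes with Theorem~\ref{thm:one-sided-switch}; your explicit check that the simplicity of $G$ makes the contraction legal is a detail the paper leaves implicit.
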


\begin{proof}
If $n\equiv0\pmod4$, apply Theorem~\ref{thm:even-contraction}. Suppose
$n\equiv2\pmod4$ and put $m=n/2$. Choose a perfect matching $M$ of $G$,
and let $\vec H$ be the legal contraction of $(G,M)$. Its underlying
multigraph is $4$-edge-connected by Lemma~\ref{lem:cut}. Since $m$ is odd,
Lemma~\ref{lem:directional-certificate}, applied to any arc of $\vec H$,
gives a near-perfect matching with a directional certificate.
Theorem~\ref{thm:one-sided-switch} then gives a paired dominating set of
size $m-1=n/2-1$.
\end{proof}

\subsection{Switching at a given index}\label{sec:prescribed}

In the next theorem, the symbols
$x_v,y_v$ are as defined in Definition~\ref{def:legal}.

\begin{theorem}\label{thm:prescribed-switch}
Let $\vec H$ be a legal balanced digraph of even order, and let
$P=\{t_i\to h_i:1\le i\le r\}$ be a perfect matching of its underlying
multigraph. For every index $k$, there is a set
$I\subseteq\{1,\ldots,r\}$ with $k\in I$ such that the edges
$x_{t_i} y_{h_i}$ for $i\notin I$ and $x_{h_i} y_{h_i}$ for $i\in I$
form a dominating matching of $L(\vec H)$. Symmetrically, there is a set
$I\ni k$ such that the edges $x_{t_i} y_{h_i}$ for $i\notin I$ and
$x_{t_i} y_{t_i}$ for $i\in I$ form a dominating matching of
$L(\vec H)$.
\end{theorem}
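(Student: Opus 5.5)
The plan is to set up the same switching formula as in the proof of Theorem~\ref{thm:one-sided-switch}, but now with no exposed vertex and with the source clause replaced by the single prescribed literal $\xi_k$. We then derive a contradiction from Lemma~\ref{lem:support-obstruction} and Lemma~\ref{lem:dominator-leaves} by the same counting. The second (tail-switch) statement then follows by reversing all arcs and interchanging $X$ and $Y$. Under that symmetry, head switches $x_{h_i}y_{h_i}$ become tail switches $x_{t_i}y_{t_i}$, and the reversed digraph is again a legal balanced digraph.

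\emph{Setting up the formula.} For $\xi\in\{0,1\}^r$ let $K_\xi$ consist of $x_{t_i}y_{h_i}$ for $\xi_i=0$ and $x_{h_i}y_{h_i}$ for $\xi_i=1$; put $I=\{i:\xi_i=1\}$.
\begin{itemize}
\item The selected $Y$-indices are exactly $H_P$.
\item Since $P$ is perfect, every index other than the $t_i$ with $\xi_i=1$ has a selected lift vertex. Whenever an index has exactly one selected lift vertex, the other is dominated through its fixed edge.
\item At $t_i$ with $\xi_i=1$, the vertex $x_{t_i}$ is dominated by $y_{h_i}$, so only $y_{t_i}$ needs a selected $X$-neighbour.
\end{itemize}
If $u_i,v_i$ are the in-neighbours of $t_i$, the domination condition is therefore
\[
 \Phi=\xi_k\wedge\bigwedge_{i=1}^r\bigl(\neg\xi_i\vee\lambda(u_i)\vee\lambda(v_i)\bigr),
\]
with $\lambda$ as in Definition~\ref{def:switching-support}. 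No constant $0$ supports occur, since there is no exposed vertex. We must show that $\Phi$ is satisfiable; any satisfying assignment gives the required $I\ni k$.

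\emph{Consequences of an obstruction.} Suppose $\Phi$ is unsatisfiable. Build the support graph of Lemma~\ref{lem:support-obstruction} with $S_0=\{k\}$, so there is a single source arc $s\to k$. Let $Q$ be the reachable part with $\nu$ variables. By the lemma:
\begin{itemize}
\item $Q$ is acyclic.
\item No reachable clause contains the constant $1$.
\item Every negative target $j$ of a reachable clause $i$ dominates $i$ in $Q$. Here $j\ne i$, since $\neg\xi_i$ in clause $i$ would require a loop $t_i\to t_i$, so $j$ is a nonleaf of the dominator tree.
\end{itemize}
A positive arc into $j$ records an out-arc $h_j\to t_i$, and the two out-neighbours of $h_j$ are distinct. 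Hence distinct arcs entering $j$ have distinct tails and every variable has at most two positive in-arcs.

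The one new point is the root: $k$ might seem to have indegree up to three. However, every reachable variable is reachable from $k$, so a positive arc $i\to k$ with $i$ reachable would close a directed cycle, which is impossible. Thus $k$ has indegree one, and every reachable variable has indegree one or two. Letting $J$ be the number of reachable variables of indegree two, counting in-arcs gives $E_++1=\nu+J$.

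\emph{Counting and contradiction.} Every support position in a reachable clause is a literal on a reachable variable. Negative targets are reachable by Lemma~\ref{lem:support-obstruction}(iii), and there are no constants. Hence $E_++E_-=2\nu$, so $E_-=\nu+1-J$. A negative target $j$ comes from an arc $t_j\to t_i$. Since $t_j\to h_j$ uses one of the two distinct out-neighbours of $t_j$, the variable $j$ is a negative target at most once, so the $E_-$ targets are distinct nonleaf variables. By Lemma~\ref{lem:dominator-leaves}, applicable because the tails-distinct condition holds, the tree has at least $J+1$ nonroot leaves. So there are at most $\nu-J-1<E_-$ nonleaf variables, a contradiction.

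The only delicate step is the root: checking that the prescribed variable $k$ cannot receive extra positive in-arcs. Acyclicity handles this, and the rest is the counting of Theorem~\ref{thm:one-sided-switch} with the zero-support defect removed.
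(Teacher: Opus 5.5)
Your proposal is correct and follows the paper's proof essentially step for step. It uses the same head-switching formula with the prescribed literal $\xi_k$ and the single source arc $s\to k$. It uses acyclicity to force $k$ to have indegree one, derives the count $E_-=\nu+1-J$, and contradicts Lemma~\ref{lem:dominator-leaves} because the negative targets are distinct nonleaves. The tail version follows, as in the paper, by reversing all arcs and interchanging the partite classes.
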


\begin{proof}
We prove the first statement. Use the head-switching variables and the
literal map from the proof of Theorem~\ref{thm:one-sided-switch}. Since
$P$ is perfect, every in-neighbor of every $t_i$ is a matched tail or a
matched head, so no support is the constant $0$. Prescribing the switch at
$k$ gives the formula
\begin{equation}
 \xi_k\wedge
 \bigwedge_{i=1}^{r}
 \bigl(\neg\xi_i\vee\lambda(u_i)\vee\lambda(v_i)\bigr),
 \label{eq:prescribed-formula}
\end{equation}
where $u_i$ and $v_i$ are the two in-neighbors of $t_i$.

Build the support graph with the single source arc $s\to k$, and let $Q$
be its reachable positive part. Suppose that
\eqref{eq:prescribed-formula} is not satisfiable. By
Lemma~\ref{lem:support-obstruction}, the graph $Q$ is acyclic and every
negative target is reachable and strictly dominates the index of its
clause. Strictness follows from looplessness.

Let $\nu$ be the number of reachable variables, let $E_+$ and $E_-$
count positive and negative support occurrences in the reachable tail
clauses, and let $J$ be the number of reachable variables of indegree two.
A positive support entering $j$ records an arc leaving $h_j$. Legality
shows that distinct ordinary support arcs entering $j$ have distinct
tails. The source tail $s$ is different from every variable.

The source arc $s\to k$ is the only source arc, so every directed path
from $s$ begins with this arc. If an ordinary positive support entered
$k$, its tail would be reachable from $k$ and would close a directed cycle.
Thus $k$ has indegree one, while every other reachable variable has
indegree at most two. Consequently,
\begin{equation}
 E_++1=\nu+J.
 \label{eq:prescribed-indegree}
\end{equation}
Every reachable tail clause has exactly two support positions. Positive
targets are reachable by definition, and negative targets are reachable
by Lemma~\ref{lem:support-obstruction}. Hence
\[
 E_++E_-=2\nu.
\]
Together with~\eqref{eq:prescribed-indegree}, this gives
\begin{equation}
 E_-=\nu+1-J.
 \label{eq:prescribed-negative}
\end{equation}

Each negative target occurs at most once, because the matched arc
$t_j\to h_j$ already uses one of the two arcs leaving $t_j$. Thus the
$E_-$ negative targets are distinct nonleaves of the dominator tree. By
Lemma~\ref{lem:dominator-leaves}, this tree has at least $J+1$ variable
leaves and at most $\nu-J-1$ nonleaf variables, contradicting
\eqref{eq:prescribed-negative}. The formula is therefore satisfiable.

Switch exactly the true variables. If no lift vertex at an index $t_i$
is selected, then $x_{t_i}$ is dominated by $y_{h_i}$ and $y_{t_i}$ is
dominated by the support in its clause. At every remaining index, at least
one lift vertex is selected. Whenever exactly one lift vertex at an index
is selected, the other is dominated through the fixed edge. The resulting
edges form a dominating matching and the switch at $k$ is made. The second
statement follows by reversing all arcs and interchanging
the two partite classes.
\end{proof}

\section{Boundary states at two-edge cuts}\label{sec:boundary-transfer}

\subsection{A boundary state with both ends selected}\label{sec:edge-II}

\begin{definition}\label{def:II-boundary}
Let $F=(X,Y;E)$ be a simple three-edge-connected cubic bipartite graph,
let $e=ab\in E(F)$ with $a\in X$ and $b\in Y$, and put $C=F-e$. An
\emph{$\sII$ boundary realization} of $C$ is a paired dominating set of
$C$ that contains $a$ and $b$, together with a perfect matching of its
induced subgraph that covers both vertices.
\end{definition}

\begin{proposition}\label{prop:edge-II}
Let $F=(X,Y;E)$ be a simple three-edge-connected cubic bipartite graph, let
$e=ab\in E(F)$ with $a\in X$ and $b\in Y$, and put $C=F-e$. If
$|V(F)|$ is divisible by four, then $C$ has an $\sII$ boundary realization
of size $|V(F)|/2$.
\end{proposition}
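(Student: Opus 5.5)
The plan is to realize $a$ and $b$ as endpoints of a standard lift, with no switching needed. First I would choose the perfect matching of $F$ so that it \emph{avoids} $e$. This is possible because $F$ is cubic bipartite: deleting $e$, or using the $1/3$-uniform point of the bipartite perfect matching polytope, leaves room for a perfect matching $M$ of $F$ with $e\notin M$. Let $\vec H$ be the legal contraction of $(F,M)$ and $H$ its underlying multigraph. Then $a=x_u$, $b=y_v$, and $e=x_uy_v$ is the lift of an arc $u\to v$ of $\vec H$. By Lemma~\ref{lem:cut}, $H$ is $4$-edge-connected, and $m=|V(H)|=|V(F)|/2$ is even.

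I would look for a perfect matching $P$ of $H$ with three properties: it avoids the arc $u\to v$, it matches $u$ by one of its out-arcs (so $u\in T_P$), and it matches $v$ by one of its in-arcs (so $v\in H_P$). Given such a $P$, the standard lift $K$ contains some edge $x_uy_\alpha$ and some edge $x_\beta y_v$, so $a,b\in V(K)$, and $K$ does not use $e$. Hence $K$ is a matching of $C=F-e$. Exactly one end of every fixed edge $x_wy_w\in M$ lies in $V(K)$, and these fixed edges remain in $C$ since $e\notin M$, so every vertex of $C$ is selected or dominated through its fixed edge. Thus $V(K)$ is an $\sII$ boundary realization of size $2\cdot(m/2)=|V(F)|/2$.

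The existence of $P$ is equivalent to a perfect matching in the multigraph $H^\ast$ obtained from $H$ by deleting the arc $u\to v$, the in-arc of $u$ (other arcs are fine), the other out-arc of $v$, and both in-arcs of $u$ and both out-arcs of $v$. I would verify Tutte's condition for $H^\ast$ by the boundary counting used in Lemmas~\ref{lem:factor-critical} and~\ref{lem:directional-certificate}. Fix $S$ and let $q=o(H^\ast-S)$. Every odd component has at least four boundary edge instances in $H$, and these end in $S$ or are among the at most five deleted arcs. This gives $4q\le 4|S|+2\cdot5$, so $q\le|S|+2$. Parity, using that $m$ is even, then gives $q\le|S|$ unless $q=|S|+2$. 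That equality case forces tight cuts: each odd component has exactly four boundary edges, $S$ is independent, and the deleted arcs are distributed in a very specific way.

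The main obstacle is excluding this extremal configuration $q=|S|+2$. I would argue as in Lemma~\ref{lem:directional-certificate}. Take a maximal barrier, so its components are factor-critical by Lemma~\ref{lem:maximal-barrier}, and form the auxiliary bipartite graph between barrier vertices and components. Then use Hall's theorem together with $4$-edge-connectivity: a Hall violator would produce a set $X_U$ with $|\delta_H(X_U)|\le2$, a contradiction. If this direct route fails, I would fall back on a switching repair. Take $P\ni u\to v$ from Lemma~\ref{lem:uniform-even}, and apply Theorem~\ref{thm:prescribed-switch} at that arc so that $b=y_v$ is selected through $x_vy_v$. Then use the symmetric tail-switch statement to bring in $a=x_u$ through $x_uy_u$, checking that the two switches do not interact on the remaining clauses.
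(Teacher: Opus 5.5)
\textbf{There is a genuine gap: the matching $P$ your argument needs does not always exist.} Your reduction to a plain standard lift needs a perfect matching $P$ of $H$ that contains \emph{both} the other out-arc $u\to\alpha$ of $u$ and the other in-arc $\beta\to v$ of $v$. Legality gives $\alpha\ne v$ and $\beta\ne u$, but nothing prevents $\alpha=\beta$. This happens exactly when $e$ lies on a $4$-cycle whose edge opposite to $e$ belongs to $M$.

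For a concrete case, take $Q_3$ on binary strings, with $X$ the even-weight strings, $a=000$, $b=100$, and
$M=\{\{000,001\},\{100,101\},\{110,010\},\{111,011\}\}$. This is a perfect matching avoiding $e$. Let $w$ be the index of $\{110,010\}$. Then the contraction contains $u\to w$ and $w\to v$. In your $H^\ast$, both $u$ and $v$ have degree one with the common neighbour $w$, so $S=\{w\}$ gives $q=|S|+2$. This extremal case cannot be ruled out by a Hall or edge-connectivity argument as in Lemma~\ref{lem:directional-certificate}, because it actually occurs. Even when $\alpha\ne\beta$, you give no reason why $H-\{u,\alpha,\beta,v\}$ should have a perfect matching.

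\textbf{The fallback does not repair this.} If $P\ni u\to v$, then among fixed edges and lifts of $P$ the only edges at $x_u$ are $e$ and $x_uy_u$. Likewise, the only edges at $y_v$ are $e$ and $x_vy_v$. Selecting both $a$ and $b$ without $e$ therefore forces both fixed edges at that single index. That gives $m/2+1$ edges and cost $|V(F)|/2+2$. Moreover, Theorem~\ref{thm:prescribed-switch} only supplies all-head or all-tail switching, never a mixture.

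\textbf{The missing idea is to ask less of $P$.} Require only that $P$ contain $f=u\to\alpha$. Such a $P$ exists by Lemma~\ref{lem:uniform-even}, since every edge instance lies in some perfect matching. Then $P$ automatically avoids $u\to v$, and $u$ is a tail.

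If $v$ is a head of $P$, your standard-lift argument finishes. If $v=t_k$ is a tail, apply the tail version of Theorem~\ref{thm:prescribed-switch} at index $k$. Tail switching never unselects any $x_{t_i}$, so $a=x_u$ stays selected and covered. The prescribed switch at $k$ adds the fixed edge $x_vy_v$, so $b=y_v$ is selected and covered.

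The resulting matching uses only fixed edges of $M$ and lifts of $P$, so it avoids $e$. It dominates $F$, and it still dominates $C=F-e$ because both ends of $e$ are selected. Its size is $|V(F)|/2$.
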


\begin{proof}
Decompose $F$ into three perfect matchings, and choose one, say $M$, that
avoids $e$. Contract $M$ and let $\vec H$ be the resulting legal
digraph. Write the arc corresponding to $e$ as $A\to B$. Since $a=x_A$, let
$f=A\to C_0$ be the other out-arc of $A$ corresponding to an edge incident
with $a$ outside $M$.

The underlying multigraph $H$ has even order and is $4$-edge-connected.
By Lemma~\ref{lem:uniform-even}, it has a perfect matching $P$ containing
$f$. Since $f$ meets $A\to B$, the matching $P$ avoids $A\to B$.
Orient the edges of $P$ as $t_i\to h_i$. Then $A$ is a tail of $P$.

If $B$ is a head of $P$, use the standard lift of $P$. It selects
$a=x_A$ and $b=y_B$, and both are covered by the lifted matching. Suppose
instead that $B=t_k$ is a tail. Apply the tail version of
Theorem~\ref{thm:prescribed-switch} at index $k$. Every tail remains
selected on the $X$-side, so $a$ is selected. Since the switch at $k$ is
prescribed, the fixed edge $x_B y_B$ is used, and hence $b$ is also selected
and covered.

In both cases the matching avoids $e$ and dominates $F$. Deleting $e$
does not affect domination because both ends of $e$ are selected. If
$|V(F)|=4k$, then $|V(H)|=2k$ and the matching has $k$ edges. Its endpoint
set therefore has size $2k=|V(F)|/2$.
\end{proof}

\subsection{Four boundary symbols}\label{sec:two-cut}

\begin{lemma}
\label{lem:boundary-parts}
Let $C$ be a side of a two-edge cut in a cubic bipartite graph with
partite sets $X$ and $Y$. The two ends of the cut edges that lie in $C$
are distinct and belong to different partite classes.
\end{lemma}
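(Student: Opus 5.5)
The plan is to repeat the degree-counting argument from the proof of Lemma~\ref{lem:cubic-bipartite-bridgeless}, this time with two cut edges. Let $C$ be a side of a two-edge cut $\delta_G(U)$, and let $U=V(C)$. For the two cut edges, let $d_X$ be the number of them whose end in $U$ lies in $X$, and let $d_Y$ be the number whose end in $U$ lies in $Y$. Then $d_X+d_Y=2$.

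First count the edge ends inside $U$ by partite class. Every edge of $G[U]$ has exactly one end in $X\cap U$ and one end in $Y\cap U$. Since $G$ is cubic, this gives
\[
 3|X\cap U|-d_X=|E_G(U)|=3|Y\cap U|-d_Y,
\]
so $d_X-d_Y=3(|X\cap U|-|Y\cap U|)$. The difference $d_X-d_Y$ lies in $\{-2,0,2\}$. The only multiple of three in that set is $0$, so $d_X=d_Y=1$. Hence one cut end in $C$ lies in $X$ and the other lies in $Y$. In particular the two ends lie in different partite classes, so they are distinct.

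The argument also works if $G$ is a multigraph. Parallel edges are counted as separate edge instances, and both the degree count and the bipartite structure are unchanged. There is no real obstacle here. The only point needing care is that the two ends could a priori coincide, since one vertex might be incident with both cut edges. In that case the two ends would lie in the same class and we would have $d_X-d_Y=\pm2$, which the counting argument already rules out.
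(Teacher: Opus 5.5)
Your proof is correct and follows the paper's own argument: both double-count the edge ends in $X\cap V(C)$ and $Y\cap V(C)$ to get $d_X-d_Y=3\bigl(|X\cap V(C)|-|Y\cap V(C)|\bigr)\in\{-2,0,2\}$, which forces $d_X=d_Y=1$. Your remark that a single vertex incident with both cut edges would give $d_X-d_Y=\pm2$ spells out the distinctness conclusion, which the paper states directly from $d_X=d_Y=1$.
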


\begin{proof}
Let $d_X$ and $d_Y$ be the numbers of cut edges whose end in $C$ lies in
$X$ and $Y$, respectively. Then $d_X+d_Y=2$. Counting the ends of
internal edges of $C$ in the two partite classes gives
$3(|X\cap V(C)|-|Y\cap V(C)|)=d_X-d_Y$. The right side is one of $-2,0,2$. It is divisible by $3$ only when it is
zero. Hence $d_X=d_Y=1$. Thus the two cut edges meet $C$ at two
distinct vertices, one in each partite class.
\end{proof}

\begin{definition}
\label{def:boundary-symbols}
A \emph{two-terminal side} is a bipartite graph
$C=(X_C,Y_C;E_C)$ with two distinguished boundary vertices
$a\in X_C$ and $b\in Y_C$. The vertices $a$ and $b$ have degree two,
and every other vertex has degree three. By
Lemma~\ref{lem:boundary-parts}, each side of a two-edge cut in a cubic
bipartite graph is naturally a two-terminal side, with the ends of the cut
as its boundary vertices.

Put $\Omega=\{\OO,\II,\DD,\RR\}$. For a boundary vertex $v$, the
four symbols have the following meanings:
\begin{enumerate}[label=\textnormal{(\alph*)}]
\item $\OO$: $v$ is selected but is not covered by the local matching, so
      it must be matched through the cut;
\item $\II$: $v$ is selected and covered by the local matching;
\item $\DD$: $v$ is not selected and has a selected neighbor in $C$;
\item $\RR$: $v$ is not selected and has no selected neighbor in $C$, so
      its neighbor across the cut must be selected.
\end{enumerate}
A \emph{boundary state} of $C$ is an ordered pair
$\sigma=(\sigma_a,\sigma_b)\in\Omega^2$.
\end{definition}

\begin{definition}\label{def:local-realization}
Let $C$ be a two-terminal side with boundary vertices $a$ and $b$. For
$\sigma=(\sigma_a,\sigma_b)\in\Omega^2$, let
$O_\sigma=\{w\in\{a,b\}:\sigma_w=\OO\}$. A \emph{local realization} of $\sigma$ is a set $S\subseteq V(C)$ together
with a matching $K\subseteq E(C[S])$ such that
$V(K)=S\setminus O_\sigma$ and the following boundary and domination
conditions hold:
\begin{enumerate}[label=\textnormal{(B\arabic*)},leftmargin=3.2em]
\item $\sigma_w=\II$ exactly when $w\in V(K)$, and
      $\sigma_w=\OO$ exactly when $w\in S\setminus V(K)$;
\item $\sigma_w=\DD$ exactly when $w\notin S$ and
      $N_C(w)\cap S\ne\varnothing$, while $\sigma_w=\RR$ exactly when
      $w\notin S$ and $N_C(w)\cap S=\varnothing$;
\item every vertex of $V(C)\setminus(S\cup\{a,b\})$ has a neighbor in
      $S$.
\end{enumerate}
Thus every endpoint of a local matching edge is selected, every selected
vertex other than an $\OO$ boundary vertex is covered exactly once, and no
unselected vertex is used by the local matching. Define
$c_C(\sigma)=\min |S|$, and put $c_C(\sigma)=+\infty$ if no local
realization exists.
\end{definition}

\begin{definition}\label{def:compatibility}
Two symbols on the ends of a cut edge are \emph{compatible}, written
$\sim$. The compatible pairs are $\OO\sim\OO$,
$\II\sim\II,\DD,\RR$, $\DD\sim\II,\DD$, and $\RR\sim\II$. The relation
is symmetric.
\end{definition}

\begin{definition}\label{def:crossed-coordinates}
For every two-terminal side, the first coordinate belongs to its boundary
vertex in $X$, and the second coordinate belongs to its boundary vertex in
$Y$. For sides $C$ and $D$ on the two sides of a cut, write the cut edges
as $a_C b_D$ and $b_C a_D$, where $a_C,a_D\in X$ and $b_C,b_D\in Y$. Thus
states $\sigma=(\sigma_a,\sigma_b)$ on $C$ and
$\tau=(\tau_a,\tau_b)$ on $D$ are compatible exactly when
$\sigma_a\sim\tau_b$ and $\sigma_b\sim\tau_a$.
\end{definition}

The following gluing identity follows directly from the boundary-state
definitions. We give both directions because the identity is used throughout
the remainder of the proof.

\begin{lemma}[Gluing identity]\label{lem:two-cut-identity}
Let $G=(X,Y;E)$ be a finite cubic bipartite graph. Suppose that a two-edge
cut separates $G$ into sides $C_1$ and $C_2$, with cut edges $a_1 b_2$ and
$b_1 a_2$, where $a_i\in X\cap V(C_i)$ and
$b_i\in Y\cap V(C_i)$. Then
\begin{equation}
 \gpr(G)=
 \min_{\alpha_1\sim\beta_2,\,\beta_1\sim\alpha_2}
 \bigl(c_{C_1}(\alpha_1,\beta_1)
      +c_{C_2}(\alpha_2,\beta_2)\bigr).
 \label{eq:two-cut-minplus}
\end{equation}
\end{lemma}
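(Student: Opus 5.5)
We prove the two inequalities separately, in each direction converting between a global paired dominating set (in the matching form of Proposition~\ref{prop:matching-form}) and a pair of local realizations with compatible boundary states. Throughout, the only interaction between $C_1$ and $C_2$ is through the two cut edges $a_1b_2$ and $b_1a_2$. Each boundary vertex has exactly one neighbor across the cut, and no other vertex has any.

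\emph{Upper bound on $\gpr(G)$.} Take compatible states $\sigma=(\alpha_1,\beta_1)$ on $C_1$ and $\tau=(\alpha_2,\beta_2)$ on $C_2$ with finite costs. Choose minimum local realizations $(S_1,K_1)$ and $(S_2,K_2)$, and put $S=S_1\cup S_2$. Let $K$ consist of $K_1\cup K_2$ together with each cut edge whose two ends both carry $\OO$.

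By Definition~\ref{def:compatibility}, $\OO$ is compatible only with $\OO$. Hence every $\OO$ vertex is matched across the cut, and $K$ is a perfect matching of $G[S]$ with $V(K)=S$. We check domination one vertex at a time.
\begin{itemize}
\item An interior unselected vertex is dominated by (B3).
\item A $\DD$ boundary vertex has a selected neighbor in its own side.
\item An $\RR$ boundary vertex is compatible only with $\II$, so its neighbor across the cut is selected.
\item $\OO$ and $\II$ vertices are selected.
\end{itemize}
Thus $S$ is a paired dominating set, and $\gpr(G)\le c_{C_1}(\sigma)+c_{C_2}(\tau)$.

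\emph{Lower bound.} Take a minimum paired dominating set $D$ with a perfect matching $K$ of $G[D]$, and set $S_i=D\cap V(C_i)$ and $K_i=K\cap E(C_i)$. Assign symbols to each boundary vertex $w$ of $C_i$ as follows:
\begin{itemize}
\item $\OO$ if $w\in D$ and its $K$-edge is the cut edge;
\item $\II$ if $w\in D$ and its $K$-edge lies inside $C_i$;
\item $\DD$ or $\RR$ if $w\notin D$, according to whether $N_{C_i}(w)\cap D$ is nonempty.
\end{itemize}
Conditions (B1)--(B3) then hold by construction. In particular, interior vertices of $C_i$ have all their neighbors in $C_i$, so their domination is witnessed locally. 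Also $V(K_i)=S_i\setminus O_\sigma$, since a cut edge lies in $K$ exactly when both its ends are $\OO$.

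It remains to check compatibility of the two symbols on each cut edge $uv$.
\begin{itemize}
\item If $u$ is $\OO$, the $K$-partner of $u$ is $v$, so $v$ is $\OO$. Conversely, if $u$ is $\II$, then $v$ cannot be $\OO$, because $v$'s $K$-edge would have to be $uv$.
\item If $u$ is $\RR$, then $u$ is dominated only via $v$, so $v\in D$. Then $v$ is $\OO$ or $\II$, and $\OO$ is excluded since its partner would be $u\notin D$. So $v$ is $\II$.
\item Two $\DD$ symbols, $\DD$ with $\II$, and $\II$ with $\II$ are allowed.
\item $\DD$ with $\RR$ is impossible, since an $\RR$ endpoint forces the other end into $D$. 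Likewise $\RR$ with $\RR$ is impossible.
\end{itemize}
This exhausts the cases and matches Definition~\ref{def:compatibility}. Hence $\gpr(G)=|S_1|+|S_2|$ is at least the right-hand side of \eqref{eq:two-cut-minplus}.

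The main point needing care is the case analysis for compatibility, together with confirming that (B1) is consistent with $V(K)=S\setminus O_\sigma$ in both directions. This includes the crossed-coordinate convention of Definition~\ref{def:crossed-coordinates}: $a_1$ pairs with $b_2$, and $b_1$ pairs with $a_2$. Lemma~\ref{lem:boundary-parts} guarantees that $a_i\ne b_i$, so the two cut edges never share an endpoint and the bookkeeping is unambiguous.
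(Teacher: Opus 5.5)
Your proposal is correct and follows essentially the same route as the paper's proof. Both directions match it: in the first you take the union of compatible local realizations and add each $\OO$--$\OO$ cut edge, and in the second you restrict a minimum paired dominating set to the two sides and check compatibility case by case. Your case check is, if anything, slightly more explicit than the paper's, since you verify (B1)--(B3) and rule out every pair involving $\OO$ or $\RR$.
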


\begin{proof}
First choose compatible local realizations $(S_i,K_i)$ on the two sides.
Take $K_1\cup K_2$, and add a cut edge when both ends of that edge have
symbol $\OO$. The resulting matching covers every selected vertex. By
Definition~\ref{def:compatibility}, an $\RR$ symbol faces an $\II$ symbol,
so every boundary request is satisfied. All other domination conditions
hold inside the sides. Hence $S_1\cup S_2$ is paired dominating. This
gives the upper bound in~\eqref{eq:two-cut-minplus}.

For the reverse bound, let $S$ be a paired dominating set of $G$, and let
$K$ be a perfect matching of $G[S]$. Restrict $S$ and the internal edges
of $K$ to each side. A boundary vertex matched through the cut receives
$\OO$. A selected boundary vertex matched inside its side receives
$\II$. An unselected boundary vertex receives $\DD$ if it has a selected
neighbor inside its side; otherwise it receives $\RR$. In the latter
case, global domination forces its neighbor across the cut to be selected.

These two local states are compatible. Indeed, if a cut edge belongs to
$K$, both ends receive $\OO$. Otherwise, an $\RR$ end must face a selected
end. That selected end is not open and therefore has symbol $\II$. The
two local costs add to $|S|$, which proves the reverse bound.
\end{proof}

\begin{definition}\label{def:side-closure}
Let $C$ be a two-terminal side with boundary vertices $a$ and $b$. Add a
new marked edge instance $e_C^\star$ with ends $a,b$, and write
$C^+=C+e_C^\star$. The graph $C^+$ is the \emph{closure} of $C$. If $ab\in E(C)$, then
$e_C^\star$ is parallel to the old edge $ab$.
\end{definition}

The next formula is a direct consequence of the definitions, but recording it
will simplify the later arguments.

\begin{proposition}[Closure formula]\label{prop:closure-formula}
For every two-terminal side $C$,
\begin{equation}
 \begin{split}
 \gpr(C^+)=\min\{&c_C(\sOO),c_C(\sII),c_C(\sID),
 c_C(\sIR),\\
 &c_C(\sDI),c_C(\sDD),c_C(\sRI)\}.
 \end{split}
 \label{eq:closure-formula}
\end{equation}
\end{proposition}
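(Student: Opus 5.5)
The plan is to derive the closure formula from the gluing identity in Lemma~\ref{lem:two-cut-identity}, applied not to a genuine two-edge cut but to the single marked edge $e_C^\star$. The closure $C^+$ is obtained from $C$ by adding one edge instance joining $a\in X_C$ and $b\in Y_C$. So a paired dominating set of $C^+$ is a paired dominating set of $C$ in which the edge $ab$ (either the old copy or the marked copy) may be used. The edge $ab$ may also supply domination between $a$ and $b$. We prove the two inequalities directly, mirroring the proof of the gluing identity, with the ``other side'' replaced by the edge $e_C^\star$ itself.

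For the upper bound, take a local realization $(S,K)$ of one of the seven listed states. If the state is $\sOO$, add $e_C^\star$ to $K$; this matches $a$ with $b$, so $S$ has a perfect matching in $C^+[S]$. In the remaining states no vertex is open, so $K$ is already a perfect matching of $C^+[S]$. Domination of interior vertices is given by (B3). A boundary vertex with symbol $\DD$ is dominated inside $C$. A boundary vertex with symbol $\RR$ appears only in $\sIR$ and $\sRI$, and there the other boundary vertex is selected and adjacent via $e_C^\star$. Hence $S$ is paired dominating in $C^+$, of size $c_C(\sigma)$.

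For the lower bound, let $S$ be a minimum paired dominating set of $C^+$ and $K$ a perfect matching of $C^+[S]$. Define symbols at $a$ and $b$ as in the gluing proof, treating $e_C^\star$ as the cut.
\begin{itemize}
\item If $e_C^\star\in K$, delete it and assign $\OO$ to both ends, giving a realization of $\sOO$.
\item Otherwise, $K\subseteq E(C)$ (an old parallel edge $ab$ counts as internal). Each selected boundary vertex receives $\II$, and each unselected one receives $\DD$ or $\RR$ according to whether it has a selected neighbor in $C$.
\end{itemize}
An $\RR$ vertex must be dominated in $C^+$ only through $e_C^\star$, so the other end is selected, hence has symbol $\II$. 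This excludes $\sRR$, $\sRD$ and $\sDR$. Symbols $\OO$ cannot occur singly, since an unmatched selected vertex would contradict $K$ being perfect. So the resulting state lies in the list, and conditions (B1)–(B3) hold by construction, giving $c_C(\sigma)\le |S|$.

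The only delicate point is bookkeeping around parallel edges. When $ab\in E(C)$, the old edge is internal, so using it yields $\sII$ rather than $\sOO$. This is consistent because the local matching may use internal edges, and domination is defined via the underlying simple graph. I expect no substantive obstacle beyond checking that the seven states are exactly those whose pairs $(\sigma_a,\sigma_b)$ are ``self-compatible'' across the single edge.
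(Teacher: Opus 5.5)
Your proof is correct and matches the paper's argument. The reverse inequality splits on whether $e_C^\star$ lies in the perfect matching: if it does, you get $\sOO$; if not, you get a state with no open boundary vertex in which any $\RR$ faces a selected $\II$. The forward inequality extends each of the seven realizations directly, with your ``self-compatible'' framing being just a restatement of that same check.
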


\begin{proof}
Let $S$ be paired dominating in $C^+$, and let $K$ be a perfect matching of
$C^+[S]$. If $e_C^\star\in K$, deleting this edge gives the state
$\sOO$. Suppose $e_C^\star\notin K$. Then neither boundary vertex is
open. A request at one boundary vertex can only be met through
$e_C^\star$, so the other boundary vertex must have symbol $\II$. The six
possible states are the six states with no open boundary vertex listed in
\eqref{eq:closure-formula}.

Conversely, an $\sOO$ realization extends to $C^+$ by adding
$e_C^\star$ to its matching. Each of the other six realizations extends
without using $e_C^\star$; any request is met by the selected opposite
boundary vertex.
Thus all seven states give paired dominating sets of $C^+$, and no other
state can occur.
\end{proof}

\subsection{Minimal sides and state transfer}\label{sec:profiles}

\begin{definition}\label{def:state-families}
In this section, a two-terminal side $C=(X_C,Y_C;E_C)$ has
boundary vertices $a\in X_C$ and $b\in Y_C$. Put
$\ClStates=\{\sOO,\sII,\sID,\sIR,\sDI,\sDD,\sRI\}$ and
$\NonOpenStates=\ClStates\setminus\{\sOO\}$. The states in $\ClStates$
are exactly those whose local realizations extend to paired dominating sets
of $C^+$. The states in $\NonOpenStates$ have no open boundary vertex. We
also put $\mathcal U=\{\sOO\}\cup
(\{\II,\DD,\RR\}\times\{\II,\DD,\RR\})$,
$\mathcal S=\{\sDD,\sRD,\sDR,\sRR\}$, and
$\mathcal G=\{\sII,\sID,\sDI,\sDD\}$.
\end{definition}

\begin{definition}\label{def:basic-side}
A two-terminal side $C$ is \emph{basic} if its closure $C^+$ is simple
and three-edge-connected.
\end{definition}

\begin{lemma}\label{lem:minimum-side-basic}
Let $G$ be a simple connected cubic bipartite graph with a two-edge cut.
Choose a side $C$ of minimum order among all sides of all two-edge cuts.
Then $C^+$ is simple and three-edge-connected.
\end{lemma}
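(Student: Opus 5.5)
We argue directly from the minimality of $C$. Write the cut as $\{a b', b a'\}$ with $a,b\in V(C)$ the boundary vertices, where $a\in X$ and $b\in Y$ by Lemma~\ref{lem:boundary-parts}, and let $D$ be the other side. In $C^+$ every vertex has degree three: the marked edge $e_C^\star=ab$ restores the missing degree at $a$ and $b$. Since $a$ and $b$ lie in different partite classes, $C^+$ is a cubic bipartite multigraph. We must show two things: that $C^+$ has no parallel edges, and that it has no edge cut of size at most two. A cut of size one is already excluded by Lemma~\ref{lem:cubic-bipartite-bridgeless}, applied to each component of $C^+$. If $C$ is disconnected, then $C^+$ is connected only through $e_C^\star$; but then some component of $C$ is a side of a cut of size one in $G$, which is impossible. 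So $C$ is connected.

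\emph{Simplicity.} The edges of $C$ are simple, so a parallel pair must consist of $e_C^\star$ together with an existing edge $ab\in E(C)$. In that case $a$ and $b$ each have exactly one further neighbor inside $C$, say $a_1$ and $b_1$. If $C=\{a,b\}$, then $a$ and $b$ would have degree two in $G$, a contradiction. Otherwise, the set $C'=V(C)\setminus\{a,b\}$ is nonempty, and $\delta_G(C')$ consists of the two edges $aa_1$ and $bb_1$ (the case $a_1=b_1$ is impossible because $a$ and $b$ lie in different classes while $a_1$ and $b_1$ lie in the classes opposite to them). This gives a two-edge cut of $G$ with a side of order $|C|-2$, contradicting the minimality of $C$.

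\emph{Three-edge-connectivity.} Suppose $U\subset V(C)$ is a nonempty proper subset with $|\delta_{C^+}(U)|=2$. By replacing $U$ with its complement in $V(C)$ if necessary, we may assume that $U$ does not contain both $a$ and $b$. We then consider two cases.
\begin{enumerate}[label=\textnormal{(\roman*)}]
\item If $U$ contains neither $a$ nor $b$, then $\delta_G(U)=\delta_{C^+}(U)$ has size two. So $U$ is a side of a two-edge cut of $G$ with $|U|<|C|$, a contradiction.
\item If $U$ contains exactly one of $a$ and $b$, then $e_C^\star\in\delta_{C^+}(U)$, and in $G$ the marked edge is replaced by exactly one real cut edge at that boundary vertex. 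Hence $|\delta_G(U)|=|\delta_{C^+}(U)|=2$, and again $|U|<|C|$ contradicts minimality.
\end{enumerate}
In either case we also need $U\neq V(G)$ and $V(G)\setminus U\neq\varnothing$, which hold because $D$ is nonempty. The case $|\delta_{C^+}(U)|=3$ is harmless, since three-edge-connectivity only forbids cuts of size at most two.

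The main point requiring care is the bookkeeping in case (ii). We must check that the cut edge of $G$ at the boundary vertex in $U$ goes to $D$, and not back into $U$, so that it exactly replaces $e_C^\star$ in the count; this holds because that edge leaves $V(C)$ entirely. A second point is that the sides of cuts in $G$ may themselves be disconnected, so the minimality hypothesis must be applied to arbitrary vertex sets $U$ with $|\delta_G(U)|=2$. This is consistent with the way the lemma chooses $C$: among all sides of all two-edge cuts.
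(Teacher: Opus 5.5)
Your proof is correct and follows the paper's argument. Both transfer a small cut $\delta_{C^+}(U)$ to a cut $\delta_G(U)$ of the same size, splitting into two cases according to whether $e_C^\star$ crosses the cut. The only difference is that you prove simplicity directly through the cut $\delta_G(V(C)\setminus\{a,b\})$ of $G$, where the paper invokes the just-established three-edge-connectivity of $C^+$; you also handle the degenerate case $V(C)=\{a,b\}$, which the paper skips.

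One small omission: in the connectedness step, a component of $C$ containing neither $a$ nor $b$ has empty boundary in $G$. That case is excluded by the connectedness of $G$, not by bridgelessness. Alternatively, your cases (i)--(ii) work verbatim for any $|\delta_{C^+}(U)|\le 2$, which makes the separate connectedness step unnecessary.
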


\begin{proof}
First, $C$ is connected. Otherwise its components would have total edge
boundary two in $G$, so one component would have boundary at most one.
Boundary zero contradicts the connectedness of $G$, and boundary one
contradicts Lemma~\ref{lem:cubic-bipartite-bridgeless}. Hence $C^+$ is a
connected cubic bipartite multigraph with no bridge.

Suppose that $C^+$ has a two-edge cut $\delta_{C^+}(U)$, where
$\varnothing\ne U\subsetneq V(C)$. If $e_C^\star$ does not belong to
this cut, then $a$ and $b$ lie on the same side. Choose $U$ to be the other
side. No original cut edge of $G$ is incident with a vertex of $U$, and
therefore $\delta_G(U)=\delta_{C^+}(U)$. Thus $G[U]$ is a smaller side of a two-edge cut of $G$.

Suppose instead that $e_C^\star\in\delta_{C^+}(U)$. Then $U$ contains
exactly one of $a,b$; call it $w$. Let $f_w$ be the original cut edge of
$G$ incident with $w$. We then have
$\delta_G(U)=(\delta_{C^+}(U)\setminus\{e_C^\star\})\cup\{f_w\}$.
Again $G[U]$ is a side of a two-edge cut and has fewer vertices than $C$.
Both cases contradict the choice of $C$. Thus $C^+$ is
three-edge-connected.

The only possible parallel pair in $C^+$ consists of $e_C^\star$ and an
old edge $ab$. If this pair exists, the two other edges incident with
$\{a,b\}$ form a two-edge cut of $C^+$. This is impossible. Therefore
$C^+$ is simple, and $C$ is basic.
\end{proof}

\begin{lemma}\label{lem:even-basic-profile}
Let $F=(X,Y;E)$ be a simple three-edge-connected cubic bipartite graph with
$|V(F)|\equiv0\pmod4$, and let $e=ab\in E(F)$, where $a\in X$ and
$b\in Y$. For $C=F-e$, we have
$c_C(\sII),c_C(\sOO)\le |V(F)|/2$. Thus, for every $\tau\in\mathcal U$, the side $C$ has a state
compatible with $\tau$ at cost at most $|V(F)|/2$.
\end{lemma}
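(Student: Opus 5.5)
The plan has two parts: prove each of the two cost bounds separately, then read off compatibility from Definition~\ref{def:compatibility}.

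For $c_C(\sII)$, Proposition~\ref{prop:edge-II} already gives an $\sII$ boundary realization of $C$ of size $|V(F)|/2$. This is a paired dominating set $S$ of $C$ together with a perfect matching $K$ of $C[S]$ that covers $a$ and $b$. We check conditions (B1)--(B3) of Definition~\ref{def:local-realization} with $O_\sigma=\varnothing$. We have $V(K)=S$, and both $a$ and $b$ lie in $V(K)$, so (B1) holds. Condition (B2) is vacuous because both boundary vertices are selected. Condition (B3) holds because $S$ dominates $C$. Hence $c_C(\sII)\le|V(F)|/2$.

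For $c_C(\sOO)$, we need a dominating matching of $F$ that uses the edge $e$ itself. Deleting $e$ then leaves a local matching whose two open ends are $a$ and $b$. As in Proposition~\ref{prop:edge-II}, decompose $F$ into three perfect matchings and choose one, $M$, that avoids $e$. Contract $M$ to obtain the legal digraph $\vec H$; the edge $e$ becomes an arc $A\to B$. By Lemma~\ref{lem:cut}, the underlying multigraph $H$ is $4$-edge-connected, and it has even order $|V(F)|/2$. Lemma~\ref{lem:uniform-even} gives a perfect matching $P$ of $H$ containing the instance $A\to B$. Its standard lift $K$ contains $x_Ay_B=ab=e$, and the proof of Theorem~\ref{thm:even-contraction} shows that $V(K)$ dominates $F$ and has size $|V(F)|/2$.

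Now pass to $C=F-e$. Put $S=V(K)$ and $K'=K\setminus\{e\}$. Then $V(K')=S\setminus\{a,b\}$, and $a,b\in S\setminus V(K')$, so (B1) holds with state $\sOO$. Condition (B2) is vacuous. For (B3), take $w\in V(C)\setminus(S\cup\{a,b\})$. Since $V(K)$ dominates $F$, $w$ has a neighbor in $S$ in $F$. The edge joining them is not $e$, because $w\notin\{a,b\}$, so it is an edge of $C$. Hence $c_C(\sOO)\le|V(F)|/2$.

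For the compatibility claim, take $\tau\in\mathcal U$. If $\tau=\sOO$, the state $\sOO$ on $C$ is compatible with it, since $\OO\sim\OO$ in both coordinates. Otherwise both coordinates of $\tau$ lie in $\{\II,\DD,\RR\}$. Since $\II$ is compatible with each of $\II$, $\DD$ and $\RR$, the state $\sII$ on $C$ is compatible with $\tau$ under the crossed pairing of Definition~\ref{def:crossed-coordinates}.

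The only step that needs real care is the $\sOO$ case. It relies on Lemma~\ref{lem:uniform-even} to place the prescribed arc instance $A\to B$ in a perfect matching, and on the fact that deleting $e$ does not destroy domination of interior vertices. The rest is routine verification.
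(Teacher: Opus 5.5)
Your proof is correct. For the $\sOO$ bound it takes a different route from the paper; the $\sII$ bound and the compatibility deduction are the same as the paper's.

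\emph{The paper's route.} It chooses the perfect matching $M$ to \emph{contain} $e$, so that $e$ becomes the fixed edge $x_zy_z$. It then takes an arbitrary perfect matching $P$ of $H$. Finally it invokes Theorem~\ref{thm:prescribed-switch} to force that fixed edge into a dominating matching: the head version if $z$ is the head of its $P$-edge, the tail version if it is the tail.

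\emph{Your route.} You choose $M$ to \emph{avoid} $e$, so that $e$ becomes the arc $A\to B$. You then use the full statement of Lemma~\ref{lem:uniform-even} (every edge instance lies in some perfect matching) to put that instance into $P$. You rightly insist on the specific instance, which matters if $B\to A$ is also an arc. The plain standard lift then already contains $x_Ay_B=e$ and dominates $F$ exactly as in Theorem~\ref{thm:even-contraction}. Your check of (B1)--(B3) after deleting $e$ is sound.

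\emph{What each buys.} Your argument is more elementary. It needs only Edmonds' polytope and bypasses the Boolean support graph and dominator-tree counting behind the prescribed switch. The paper's argument lets $P$ be arbitrary and matches the prescribed-switch pattern it also uses in Proposition~\ref{prop:edge-II} and Lemma~\ref{lem:deleted-prescribed}. For this lemma, however, that machinery is not needed.
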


\begin{proof}
The bound for $\sII$ is Proposition~\ref{prop:edge-II}. To obtain $\sOO$,
choose a perfect matching $M$ of $F$ containing $e$, and let $\vec H$ be the
legal contraction of $(F,M)$. Let $H$ be its underlying multigraph and let
$z$ be the vertex corresponding to $e$. By Lemma~\ref{lem:cut}, the
even-order multigraph $H$ is $4$-edge-connected. Hence
Lemma~\ref{lem:uniform-even} gives a perfect matching $P$ of $H$.

Let the edge of $P$ covering $z$ be oriented as it occurs in the legal
digraph. If $z$ is its head, use the prescribed head switch. If $z$ is
its tail, use the prescribed tail switch. The resulting dominating
matching contains the fixed edge $e$. Remove $e$ from the local matching
but keep its two ends selected. This gives an $\sOO$ realization of cost
$|V(F)|/2$.

The state $\sOO$ is compatible with $\sOO$, and $\sII$ is compatible with
every state having no open coordinate. Thus, for a given
$\tau\in\mathcal U$, use the $\sOO$ realization when $\tau=\sOO$ and
the $\sII$ realization otherwise.
\end{proof}

\begin{lemma}\label{lem:odd-basic-profile}
Let $F=(X,Y;E)$ be a simple three-edge-connected cubic bipartite graph with
$|V(F)|=4k+2$, and let $e=ab\in E(F)$, where $a\in X$ and $b\in Y$.
For $C=F-e$, some
$\sigma\in\NonOpenStates$ satisfies
$c_C(\sigma)\le 2k=|V(F)|/2-1$.
\end{lemma}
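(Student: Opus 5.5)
We need a paired dominating set of $C^+$-type states without open boundary, cost $2k$, i.e. a dominating matching of $F$ with $k$ edges that avoids $e$. Deleting $e$ then only matters if $a$ or $b$ relies on the other for domination; if neither of $a,b$ is selected, each must have a selected neighbor other than across $e$, giving $\sDD$. If one is selected and matched internally, we get a state $(\II,\cdot)$ or $(\cdot,\II)$, all of which lie in $\NonOpenStates$ provided the other end is not $\OO$, which holds because the matching avoids $e$ and covers every selected vertex. The only bad case is $\sRR$-like, impossible when neither is selected? No: if neither is selected and $a$'s only selected neighbor is $b$, that contradicts $b$ unselected. So $\sRR$, $\sDR$, $\sRD$ cannot arise when neither endpoint is selected and the matching dominates $F$. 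Hence it suffices to find a dominating matching of $F$ of size $k$ avoiding $e$.

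The plan is to pick a perfect matching $M$ of $F$ avoiding $e$ (decompose $F$ into three perfect matchings, as in Proposition~\ref{prop:edge-II}), contract it to the legal digraph $\vec H$ of odd order $m=2k+1$. By Lemma~\ref{lem:cut}, $H$ is $4$-edge-connected. The edge $e$ becomes an arc $A\to B$. Apply Lemma~\ref{lem:directional-certificate} with the prescribed arc $A\to B$: this gives a near-perfect matching $P$ avoiding $A\to B$ with a positive or negative certificate. Theorem~\ref{thm:one-sided-switch} then yields a dominating matching of $L(\vec H)=F$ with $k$ edges, using only fixed edges (edges of $M$) and lifts of edges of $P$. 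Since $e\notin M$ and $e$ is the lift of $A\to B\notin P$, the matching avoids $e$.

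Then read off the boundary state of $C=F-e$ from this matching as in the reverse direction of Lemma~\ref{lem:two-cut-identity}: selected boundary vertices are covered internally (symbol $\II$), unselected ones get $\DD$ or $\RR$, and by the argument above an $\RR$ can only face an $\II$ across $e$. So the state lies in $\ClStates\setminus\{\sOO\}=\NonOpenStates$, with cost $2k$. The main point needing care is confirming that Lemma~\ref{lem:directional-certificate} accepts an arbitrary prescribed arc (it does) and that the final matching of Theorem~\ref{thm:one-sided-switch} indeed never uses the lift of a non-$P$ arc, which is its "moreover" clause; I expect no real obstacle beyond this bookkeeping.
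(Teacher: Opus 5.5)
Your proposal is correct and follows the paper's proof essentially step for step: a perfect matching $M$ avoiding $e$, Lemma~\ref{lem:directional-certificate} applied to the arc $A\to B$, and the ``moreover'' clause of Theorem~\ref{thm:one-sided-switch} to see that $e$ is never used. The state is then read off exactly as the paper does: no $\OO$ occurs because every selected vertex is covered by an edge other than $e$, and $\sRR,\sDR,\sRD$ cannot occur because the matching dominates $F$, so an $\RR$ end can only face a selected ($\II$) end. This gives a state in $\NonOpenStates$ of cost $2k$; your first paragraph reaches this conclusion correctly, if somewhat circuitously.
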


\begin{proof}
Choose a perfect matching $M$ of $F$ that avoids $e$, and let $\vec H$ be
the legal contraction of $(F,M)$. The edge $e$ becomes an arc $A\to B$.
By Lemma~\ref{lem:cut}, the underlying multigraph of $\vec H$ is
$4$-edge-connected. Lemma~\ref{lem:directional-certificate} then gives a
near-perfect matching that avoids $A\to B$ and has a directional
certificate. Theorem~\ref{thm:one-sided-switch} gives a dominating
matching of $F$ with $k$ edges that uses only fixed edges from $M$ and
lifts of edges in the near-perfect matching. It therefore avoids $e$. After $e$ is
deleted, every internal vertex remains dominated. A selected boundary
vertex has symbol $\II$, and an unselected boundary vertex has symbol
$\DD$ or $\RR$. If one end has symbol $\RR$, then it was dominated in
$F$ only through $e$, so the other end is selected and has symbol $\II$.
Thus the resulting state belongs to $\NonOpenStates$ and has cost $2k$.
\end{proof}

\begin{definition}\label{def:bond-extension}
Let $D=(X_D,Y_D;E_D)$ be a two-terminal side with boundary vertices
$x\in X_D$ and $y\in Y_D$. Add a vertex $u$ to the first partite class
and a vertex $v$ to the second, together with the edges $uv$, $uy$, and
$vx$. The resulting side $R$, whose boundary vertices are $u$ and $v$,
is the \emph{two-vertex extension} of $D$.

Assume that $\tau\in\Omega^2$ has a local realization in $D$. For
$\rho\in\Omega^2$, write $\tau\xrightarrow{d}\rho$ if every local
realization of $\tau$ in $D$ extends to a local realization of $\rho$ in
$R$, without changing its selected vertices or matching edges in $D$, after
exactly $d$ vertices from $\{u,v\}$ are selected.
\end{definition}

\begin{theorem}
\label{thm:exact-state-transfer}
Let $R$ be the two-vertex extension of a two-terminal side $D$. For every state
$\tau\in\Omega^2$ that has a local realization in $D$, the complete list
of transfers from $\tau$ is given in Table~\ref{tab:full-state-transfer}.
In the row indexed by $\tau$, the entry $\rho[d]$ denotes
$\tau\xrightarrow{d}\rho$. No other outer state is possible while the
inner realization is left unchanged, and each value of $d$ in the table is
smallest possible.
\end{theorem}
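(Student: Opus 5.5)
The proof is a finite case analysis, organized by the inner state $\tau=(\tau_x,\tau_y)$ and the selection pattern on $\{u,v\}$. The key observation is that the extension is local. In $R$, the new vertex $u$ has neighbors $v$ and $y$ plus its outer cut edge, and $v$ has neighbors $u$ and $x$ plus its outer cut edge. The old boundary vertices $x,y$ become internal vertices of $R$. Hence condition (B3) for $R$ at $x$ and $y$, and the status of $u,v$ as outer boundary vertices, depend only on three things: the symbols $\tau_x,\tau_y$, the choice of $U\subseteq\{u,v\}$ to select, and the new matching edges. The inner realization in $D$ does not otherwise matter. So every row of Table~\ref{tab:full-state-transfer} can be computed from $\tau$ alone, and the quantifier "every local realization of $\tau$" in Definition~\ref{def:bond-extension} is handled uniformly.

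First, I would list the constraints imposed by each inner symbol.

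\begin{enumerate}[label=\textnormal{(\roman*)}]
\item If $\tau_x=\OO$, then $x$ is selected but unmatched. It must be matched inside $R$, and its only new neighbor is $v$. So $v$ must be selected and the edge $vx$ must be added to the matching.
\item The case $\tau_y=\OO$ is symmetric: $u$ is selected and $uy$ is matched.
\item If $\tau_x=\RR$, then $x$ is undominated in $D$, and (B3) forces $v\in U$. Symmetrically, $\tau_y=\RR$ forces $u\in U$.
\item If $\tau_x\in\{\II,\DD\}$, there is no constraint at $x$; likewise at $y$.
\end{enumerate}

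Second, for each admissible $U$ and each admissible choice of new matching edges, I would read off the outer symbols.
\begin{itemize}
\item A selected $u$ gets $\II$ if it is matched (via $uv$ or $uy$), and $\OO$ otherwise.
\item An unselected $u$ gets $\DD$ if $v\in U$ or $y$ is selected (that is, $\tau_y\in\{\OO,\II\}$), and $\RR$ otherwise.
\item The same rules apply to $v$ with $x$ in place of $y$.
\end{itemize}
Note that $uy$ can be a new matching edge only when $\tau_y=\OO$, because an $\II$ vertex is already covered and a $\DD$ or $\RR$ vertex is not selected. Likewise $vx$ requires $\tau_x=\OO$. The edge $uv$ is available when both $u$ and $v$ are selected and not otherwise matched.

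Enumerating the at most four sets $U$ together with the matching options gives, for each $\tau$, every reachable $\rho$ with its cost $d=|U|$. Where the same $\rho$ arises from several choices of $U$, I would record the minimum $d$. These records form the table entries.

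For completeness (that no other outer state occurs) and minimality of $d$, I would argue by the same enumeration in reverse. Any extension keeps the inner part fixed, so it is determined by $U$ and the new matching edges. The enumeration therefore covers all extensions. Minimality holds because the entry $\rho[d]$ records the least $|U|$ that produces $\rho$.

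The main obstacle is not conceptual. It is the bookkeeping over the $16$ inner states, with up to four selection patterns each, together with the care needed at the $\OO$ symbols, which force particular matching edges and can rule out $\rho$ entirely. I would guard against errors by grouping states by the pair of constraint types (forced-matched, forced-selected, free) at $x$ and at $y$, which collapses the table to a few patterns. I would then verify each pattern once, deferring the full listing to the appendix check.
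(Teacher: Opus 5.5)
Your proposal is correct and is essentially the paper's own argument. The paper likewise reduces the problem to local conditions (T1)--(T4) on the pair $(Z,L)$, observes that these depend only on $\tau$ and not on the chosen inner realization, and enumerates all $4\cdot 8$ choices per inner state in the appendix.

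One small simplification: an outer vertex is selected exactly when its symbol is $\OO$ or $\II$. So the cost $d$ is already determined by $\rho$, and the minimality claim holds automatically rather than requiring a minimum over several choices of $U$.
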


\begingroup
\small
\setlength{\tabcolsep}{5pt}
\renewcommand{\arraystretch}{1.08}
\begin{longtable}{@{}c >{\raggedright\arraybackslash}p{0.76\textwidth}@{}}
\caption{Exact transfers through one two-vertex extension.}
\label{tab:full-state-transfer}\\
\toprule
Inner state & Outer states and added costs\\
\midrule
\endfirsthead
\toprule
Inner state & Outer states and added costs\\
\midrule
\endhead
\midrule
\multicolumn{2}{r}{Continued on the next page}\\
\endfoot
\bottomrule
\endlastfoot
$\sOO$ & $\sII[2]$\\
$\sOI$ & $\sDI[1],\ \sOI[2]$\\
$\sOD$ & $\sDI[1],\ \sOI[2]$\\
$\sOR$ & $\sOI[2]$\\
$\sIO$ & $\sID[1],\ \sIO[2]$\\
$\sII$ & $\sDD[0],\ \sDO[1],\ \sOD[1],\ \sOO[2],\ \sII[2]$\\
$\sID$ & $\sRD[0],\ \sDO[1],\ \sOD[1],\ \sOO[2],\ \sII[2]$\\
$\sIR$ & $\sOD[1],\ \sOO[2],\ \sII[2]$\\
$\sDO$ & $\sID[1],\ \sIO[2]$\\
$\sDI$ & $\sDR[0],\ \sDO[1],\ \sOD[1],\ \sOO[2],\ \sII[2]$\\
$\sDD$ & $\sRR[0],\ \sDO[1],\ \sOD[1],\ \sOO[2],\ \sII[2]$\\
$\sDR$ & $\sOD[1],\ \sOO[2],\ \sII[2]$\\
$\sRO$ & $\sIO[2]$\\
$\sRI$ & $\sDO[1],\ \sOO[2],\ \sII[2]$\\
$\sRD$ & $\sDO[1],\ \sOO[2],\ \sII[2]$\\
$\sRR$ & $\sOO[2],\ \sII[2]$\\
\end{longtable}
\endgroup

\begin{proof}
Fix a local realization $(S,K)$ of an inner state $\tau$ in $D$. Let
$Z\subseteq\{u,v\}$ be the new selected vertices, and let
$L\subseteq\{uv,uy,vx\}$ be the new matching edges. Put
$S'=S\cup Z$ and $K'=K\cup L$. The pair $(S',K')$ is a local realization
in $R$ exactly when the following conditions hold:
\begin{enumerate}[label=\textnormal{(T\arabic*)},leftmargin=3.1em]
\item $K'$ is a matching in $R[S']$;
\item an $\OO$ symbol at $x$ forces $vx\in L$, and an $\OO$ symbol at
      $y$ forces $uy\in L$;
\item an $\RR$ symbol at $x$ forces $v\in Z$, and an $\RR$ symbol at $y$
      forces $u\in Z$;
\item a selected outer vertex has symbol $\II$ or $\OO$ according as it is
      covered or not covered by $L$, while an unselected outer vertex has
      symbol $\DD$ or $\RR$ according as it has or has no selected
      neighbor in $R$.
\end{enumerate}
These conditions give the exact matching requirement
$V(K')=S'\setminus O_\rho$, and they also give all boundary domination
requirements. They depend only on the inner state $\tau$ and on the pair
$(Z,L)$, not on the chosen local realization $(S,K)$. Thus every legal
pair $(Z,L)$ extends every local realization of $\tau$.

There are four choices for $Z$ and eight choices for $L$. For each
inner state, Appendix~\ref{app:bond-check} checks all $32$ pairs
$(Z,L)$ against conditions \textnormal{(T1)}--\textnormal{(T4)}. Once a
legal pair is fixed, the outer state and the added cost $d=|Z|$ are fixed.
The choice of $(S,K)$ was arbitrary, so the check proves every transfer in
Table~\ref{tab:full-state-transfer} and excludes every other transfer. In
each row, the absence of a legal pair with smaller $|Z|$ also proves the
claimed minimum cost. Thus the table is a finite consequence of the four
local rules above and does not depend on a particular realization of the
inner state.
\end{proof}

\subsection{State transfer along an extension chain}\label{sec:chain-transfer}

\begin{definition}\label{def:bond-chain}
A \emph{extension chain} is a sequence
$D_0,D_1,\ldots,D_r$ in which $D_i$ is the two-vertex extension of $D_{i-1}$
for $1\le i\le r$.
\end{definition}

The next lemma lists the parts of Table~\ref{tab:full-state-transfer}
that will be used later. Compatibility is taken across the crossed cut edges
in Lemma~\ref{lem:two-cut-identity}.

\begin{lemma}\label{lem:chain-bounds}
Let $D_0,D_1,\ldots,D_r$ be an extension chain, and suppose that
$\tau\in\ClStates$ has a local realization in $D_0$ of cost $q$.
\begin{enumerate}[label=\textnormal{(\roman*)},leftmargin=2.8em]
\item If $r$ is even, $D_r$ has a state in $\mathcal U$ of cost at most
      $q+r$. If $r$ is odd, it has a state in $\mathcal U$ of cost at most
      $q+r+1$.
\item For every $\sigma\in\NonOpenStates$ and odd $r$, $D_r$ has a state
      compatible with $\sigma$ of cost at most $q+r+1$.
\item For every $\sigma\in\NonOpenStates$ and even $r\ge2$, $D_r$ has a state
      compatible with $\sigma$ of cost at most $q+r+2$.
\item If $\tau=\sII$, then for even $r$ the state $\sII$ is realizable
      in $D_r$ with cost at most $q+r$, while for odd $r$ a state in
      $\mathcal U$ is realizable with cost at most $q+r-1$.
\item If $\tau\in\mathcal G$, then for every odd $r\ge1$, $D_r$ has a
      state in $\mathcal S$ of cost at most $q+r-1$; for every even
      $r\ge2$, $D_r$ has an $\sII$ state of cost at most $q+r$.
\end{enumerate}
\end{lemma}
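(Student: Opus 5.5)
The proof composes single-step transfers from Table~\ref{tab:full-state-transfer}. By Definition~\ref{def:bond-extension}, a relation $\tau\xrightarrow{d}\rho$ means that \emph{every} local realization of $\tau$ in $D_{i-1}$ extends to one of $\rho$ in $D_i$ with exactly $d$ added vertices. So a walk $\tau=\rho_0\xrightarrow{d_1}\rho_1\xrightarrow{d_2}\cdots\xrightarrow{d_r}\rho_r$ in the table yields a realization of $\rho_r$ in $D_r$ of cost $q+\sum d_i$. Each part therefore reduces to exhibiting a suitable walk of length $r$. I read the lengths $r=0$ (parts (i) and (iv)) as the trivial walk, using $\ClStates\subseteq\mathcal U$.

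I would use the following rows of the table.
\begin{itemize}
\item Every state of $\ClStates$ has an $\sII[2]$ transfer: $\sOO\to\sII[2]$, and each of the six non-open closed states lists $\sII[2]$.
\item A two-step $\sII$-cycle $\sII\xrightarrow{0}\sDD\xrightarrow{2}\sII$ costs $2$ per two extensions.
\item There is a loop $\sII\xrightarrow{2}\sII$.
\item Each state of $\mathcal G$ has a free move into $\mathcal S$: $\sII\to\sDD[0]$, $\sID\to\sRD[0]$, $\sDI\to\sDR[0]$, $\sDD\to\sRR[0]$.
\item Each state of $\mathcal S$ returns by $\sII[2]$.
\end{itemize}
Note also that $\sII$ is compatible with every non-open state, since $\II\sim\II,\DD,\RR$, and that $\sDD,\sII\in\mathcal U$.

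\emph{Parts (i)--(iii).} For $r\ge1$, first move $\tau\to\sII$ at cost $2$, then repeat the cycle through $\sDD$.
\begin{itemize}
\item Odd $r$: after the first step, $(r-1)/2$ full cycles end at $\sII$ with total cost $q+2+(r-1)=q+r+1$. Since $\sII\in\mathcal U$ and $\sII$ is compatible with every $\sigma\in\NonOpenStates$, this gives the odd case of (i) and all of (ii).
\item Even $r\ge2$, for (i): do the first step, $(r-2)/2$ cycles, and one final $\sII\to\sDD[0]$. This ends in $\sDD\in\mathcal U$ at cost $q+2+(r-2)=q+r$.
\item Even $r\ge2$, for (iii): replace the final zero-cost step by the loop $\sII\to\sII[2]$. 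This ends in $\sII$ at cost $q+r+2$, which is compatible with every non-open $\sigma$.
\end{itemize}

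\emph{Parts (iv) and (v).} For (iv), start at $\sII$ and use only the cycle. For even $r$, $r/2$ cycles return to $\sII$ at cost $q+r$. For odd $r$, append one step $\sII\to\sDD[0]$, reaching $\sDD\in\mathcal U$ at cost $q+r-1$.

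For (v), first take the zero-cost move $\mathcal G\to\mathcal S$, then the return $\mathcal S\to\sII$ at cost $2$, then cycles.
\begin{itemize}
\item Odd $r$: the free move followed by $(r-1)/2$ blocks of $\mathcal S\to\sII[2]$, $\sII\to\sDD[0]$ ends in $\sDD\in\mathcal S$, or in the initial $\mathcal S$-state when $r=1$. The cost is $q+r-1$.
\item Even $r\ge2$: the free move, one return to $\sII$, then $(r-2)/2$ cycles give $\sII$ at cost $q+2+(r-2)=q+r$.
\end{itemize}

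\emph{Main obstacle.} The only delicate point is justifying that the per-step transfers compose along the chain. This is exactly the "every local realization extends" clause of Definition~\ref{def:bond-extension}, established uniformly in Theorem~\ref{thm:exact-state-transfer}. Beyond that, the work is careful bookkeeping of parities and of the small cases $r=1,2$.
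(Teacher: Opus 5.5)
Your proof is correct and follows the paper's approach of composing single-step transfers from the transfer table along walks of length $r$; for (iv) and (v) your walks are exactly the paper's. The only difference is that for (i)--(iii) you pad with the two-step cycle $\sII\to\sDD\to\sII$ of cost two, whereas the paper pads with the four-step cycle $\sII\to\sDD\to\sRR\to\sOO\to\sII$ of cost four; both give the stated bounds.
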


\begin{proof}
For $r=0$, part \textnormal{(i)} follows from
$\ClStates\subseteq\mathcal U$, and the even case of \textnormal{(iv)} is
immediate. We may therefore assume that every path used below has positive
length.

We read the arrows from Table~\ref{tab:full-state-transfer}. Every state in
$\ClStates$ reaches $\sII$ in one step at cost two. In two steps, the
following paths end in $\mathcal U$ and have cost at most two:
\[
\begin{array}{lll}
\sOO\to\sII\to\sDD,&
\sII\to\sDD\to\sRR,&
\sID\to\sRD\to\sII,\\
\sIR\to\sOD\to\sDI,&
\sDI\to\sDR\to\sII,&
\sDD\to\sRR\to\sOO,\\
\sRI\to\sDO\to\sID.&
\end{array}
\]
In three steps, the path
$\tau\to\sII\to\sDD\to\sRR$ has cost two for every initial state
$\tau\in\ClStates$. From any state in $\mathcal U$, a four-step path to
$\sOO$ has cost four. For a state with no open boundary vertex, first move to $\sII$ at cost
two, and then use $\sII\to\sDD\to\sRR\to\sOO$ at cost two. From $\sOO$, use
$\sOO\to\sII\to\sDD\to\sRR\to\sOO$. These paths, followed by any number
of four-step repetitions, prove \textnormal{(i)}.

For \textnormal{(ii)}, use $\tau\to\sII$ at length one and
$\tau\to\sII\to\sDD\to\sII$ at length three. The state $\sII$ is
compatible with every state in $\NonOpenStates$. Repeating the four-step
cycle $\sII\to\sDD\to\sRR\to\sOO\to\sII$ proves the statement for all odd lengths.

For \textnormal{(iii)}, use $\tau\to\sII\to\sII$ at length two. At length
four, use $\tau\to\sII\to\sDD\to\sRR\to\sII$. Both paths have cost four.
The same four-step cycle gives every larger even length.

For \textnormal{(iv)}, repeat the two-step path
$\sII\to\sDD\to\sII$, which has cost two. This gives the even case. One
additional zero-cost arrow $\sII\to\sDD$ gives the odd case.

For \textnormal{(v)}, the zero-cost arrows are
$\sII\to\sDD$, $\sID\to\sRD$, $\sDI\to\sDR$, and
$\sDD\to\sRR$. Thus every state in $\mathcal G$ reaches $\mathcal S$ in one step at no
cost. Every state in $\mathcal S$ then reaches $\sII$ in one step at cost
two, and $\sII\to\sDD$ has cost zero. Repeating this two-step pattern
proves the odd case. For a positive even length, stop at the intermediate
$\sII$ state.
\end{proof}

\section{The Gallai--Edmonds decomposition and the main proof}\label{sec:global-proof}

\subsection{Switching in a factor-critical subgraph}\label{sec:gallai}

We now prove a switching lemma for a factor-critical subgraph. All
neighbors outside this subgraph have already been selected.

\begin{lemma}\label{lem:relative-switch}
Let $\vec L$ be a legal balanced digraph, and let
$\vec Q$ be obtained from $\vec L$ by deleting at most one arc. Let $Q$
be the underlying multigraph of $\vec Q$. Suppose that
$W\subseteq V(Q)$ and $Q[W]$ is factor-critical. Let
$A\subseteq V(Q)\setminus W$ contain every neighbor of $W$ outside $W$.
In the canonical lift $L(\vec L,\vec Q)$, assume that the fixed edge
$x_a y_a$ has already been selected for every $a\in A$.

If $\vec Q$ contains an arc $z\to a_0$ with $z\in W$ and $a_0\in A$,
then there is a matching $K_W$ of $(|W|-1)/2$ edges, using only lift
vertices indexed by $W$, such that
$K_W\cup\{x_a y_a:a\in A\}$ is a matching and its endpoint set dominates
every lift vertex indexed by $W$. The same conclusion holds when
$\vec Q$ contains the arc $a_0\to z$.
\end{lemma}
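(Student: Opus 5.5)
The plan is to imitate the proof of Theorem~\ref{thm:one-sided-switch}, now relative to $W$. By symmetry (reverse all arcs and interchange the partite classes) it suffices to treat the arc $z\to a_0$. Since $Q[W]$ is factor-critical, $Q[W]-z$ has a perfect matching $P_W$. We write its edges in their directions in $\vec Q$ as $t_i\to h_i$, $1\le i\le r$, so that $2r+1=|W|$. For each $i$ we introduce a head-switching variable $\xi_i$ and select either $x_{t_i}y_{h_i}$ or the fixed edge $x_{h_i}y_{h_i}$. These $r$ edges use only lift vertices indexed by $W\setminus\{z\}$. They are therefore disjoint from the already selected fixed edges at $A$, and together with those edges they form a matching.

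The first step is to record the domination conditions. Every $y_{h_i}$ is always selected. The vertex $x_{t_i}$ is always dominated, since it is either selected or adjacent to $y_{h_i}$, and every index at which exactly one lift vertex is selected is finished through its fixed edge. Next consider the exposed index $z$. The arc $z\to a_0$ is present in $\vec Q$, hence the edge $x_zy_{a_0}$ is present in $L(\vec L,\vec Q)$, and $y_{a_0}$ is selected; so $x_z$ is dominated. This is the analogue of the positive certificate. Two kinds of condition remain. One is the source clause for $y_z$, namely the disjunction of $\lambda(u)$ over the in-neighbours $u$ of $z$ in $\vec Q$. The other is a tail clause $\neg\xi_i\vee\lambda(u_i)\vee\lambda(v_i)$ for $y_{t_i}$. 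Here we use the relative convention stated after Definition~\ref{def:switching-support}: an in-neighbour in $A$ contributes the constant $1$, the in-neighbour $z$ contributes $0$, and a deleted arc contributes no support position. All in-neighbours of $W$-vertices lie in $W\cup A$, so these are the only possibilities. If the source clause contains a constant $1$ or a literal $\neg\xi_j$, it is satisfied by suitable choices (the all-zero assignment in the latter case), exactly as in Step~1 of Theorem~\ref{thm:one-sided-switch}. So we may assume the source clause consists only of positive literals $\xi_p$, plus possibly one more.

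Next, suppose the formula is unsatisfiable, and apply Lemma~\ref{lem:support-obstruction}. Then the reachable positive support graph is acyclic and no reachable clause contains $1$. Every negative target is reachable and strictly dominates its clause index. Negative targets are distinct, because $t_j$ has only one out-arc besides its matched arc. Let $\nu$, $E_\pm$ and $J$ be as before, with the number of source arcs $\sigma\le 2$. The indegree count gives $E_++\sigma=\nu+J$. The delicate step is bounding the missing support positions. These come from clauses with a $0$ support, which arise from arcs $z\to t_i$, and from a deleted arc.

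Now $z$ has at most one out-arc into $W$, since the other goes to $a_0$, so at most one support is $0$. There is at most one deleted arc. If the deleted arc ends at $z$, then $\sigma\le1$ and no tail clause loses a position. Otherwise it removes either a support position or a source arc. In every case we expect $E_++E_-\ge 2\nu-(3-\sigma)$, which gives $E_-\ge\nu+1-J$. This contradicts Lemma~\ref{lem:dominator-leaves}, since there are at most $\nu-J-1$ nonleaf variables. The main obstacle is this bookkeeping: in each case, the total loss from the deletion, the $0$ support and a short source clause must not exceed $3-\sigma$. The case $\sigma=0$ needs separate handling. There the source clause is empty only if both in-arcs of $z$ are missing or come from $z$ itself, which is impossible. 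So either $\sigma\ge1$ or the source clause is a constant $1$, and the latter is already settled. Once the formula is satisfied, the selected matching $K_W$ has $r=(|W|-1)/2$ edges and dominates every lift vertex indexed by $W$.
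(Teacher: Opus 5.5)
\textbf{Gap in the counting step.} Your setup follows the paper's route: head-switching variables on a perfect matching of $Q[W]-z$, the constant $1$ at $A$ and $0$ at $z$, the support graph, and the dominator-tree count. The one step that is new compared with Theorem~\ref{thm:one-sided-switch} is the bookkeeping of lost support positions, and you state it incorrectly.

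Combine $E_++\sigma=\nu+J$ with your inequality $E_++E_-\ge 2\nu-(3-\sigma)$. This gives only $E_-\ge\nu-J+2\sigma-3$. For $\sigma=1$ that is $\nu-J-1$, which contradicts nothing.

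In the other direction, the target $E_-\ge\nu+1-J$ cannot be reached in general. Suppose $\sigma=2$, the second out-arc of $z$ enters a reachable tail (a zero support), and the deleted arc enters a different reachable tail. Then two tail positions are lost, so $E_++E_-=2\nu-2$. That already violates your inequality, which would require $2\nu-1$, and the count yields exactly $E_-=\nu-J$.

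Your sentence ``otherwise it removes either a support position or a source arc'' also conflates the cases. A deleted arc that does not end at $z$ cannot remove a source arc.

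\textbf{The repair.} The loss must be bounded by $\sigma$, not by $3-\sigma$. Let $\Delta$ be the number of deficient tail positions, meaning positions whose arc is deleted or which carry the constant $0$ from an arc $z\to t_i$. Then $E_++E_-=2\nu-\Delta$, so $E_-=\nu-J+\sigma-\Delta$. It therefore suffices to prove $\Delta\le\sigma$. This gives $E_-\ge\nu-J$, which already contradicts the bound of at most $\nu-J-1$ nonleaf variables; you do not need $\nu+1-J$.

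\begin{itemize}
\item Case $\sigma=1$. The other original in-arc of $z$ cannot give a positive literal, since then $\sigma=2$. It cannot give a negative literal or the constant $1$, since then the all-zero assignment works. It cannot give the constant $0$, since that would be a loop. So it is the deleted arc, and the deletion removes no tail position. Because $z\to a_0$ uses one out-arc of $z$, at most one tail position is a zero support, so $\Delta\le1$.
\item Case $\sigma=2$. Both in-arcs of $z$ survive. Tail clauses lose at most one zero support and one deleted position, so $\Delta\le2$.
\end{itemize}

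Your case split (deleted arc into $z$ or not) is the right one, but you must observe that it coincides with $\sigma=1$ versus $\sigma=2$. With that observation and the corrected inequality $E_++E_-\ge2\nu-\sigma$, your argument goes through as in the paper.
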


\begin{proof}
We prove the case $z\to a_0$. The other case follows by reversing all
arcs and interchanging the two partite classes.

If $W=\{z\}$, take $K_W=\varnothing$. The selected vertex $y_{a_0}$
dominates $x_z$. The vertex $z$ has two distinct in-neighbors in $\vec L$,
and at most one arc was deleted. Hence $\vec Q$ contains an arc $a\to z$
for some $a\notin W$. By the choice of $A$, we have $a\in A$, and the
selected vertex $x_a$ dominates $y_z$. We may therefore assume
$|W|\ge3$.

\medskip
\noindent\emph{Step 1: the relative switching formula.}
Since $Q[W]$ is factor-critical, $Q[W]-z$ has a perfect matching. Write
its edge instances in their directions as
$P=\{t_i\to h_i:1\le i\le r\}$, where $r=(|W|-1)/2$. Use the
head-switching variables from Definition~\ref{def:switching-support}. The
selected $Y$-indices are the vertices $h_i$, and the selected $X$-indices
are $\{t_i:\xi_i=0\}\cup\{h_i:\xi_i=1\}$. Since $z\to a_0$ is present
in $\vec Q$, the selected vertex $y_{a_0}$ dominates $x_z$. Thus only
$y_z$ still needs to be dominated at the exposed index. If $\xi_i=1$,
then $x_{t_i}$ is dominated by $y_{h_i}$, while $y_{t_i}$ needs a
selected $X$-neighbor.

Extend the support map to $A\cup W$ by
\[
 \widehat\lambda(u)=
 \begin{cases}
  1,&u\in A,\\
  0,&u=z,\\
  \lambda(u),&u\in W\setminus\{z\}.
 \end{cases}
\]
Every neighbor of $W$ lies in $A\cup W$. In $\vec Q$, define
\[
 C_z=\bigvee_{u\in\Nin_{\vec Q}(z)}\widehat\lambda(u),
 \qquad
 C_i=\neg\xi_i\vee
      \bigvee_{u\in\Nin_{\vec Q}(t_i)}\widehat\lambda(u),
\]
and put $\Phi_W=C_z\wedge\bigwedge_{i=1}^{r}C_i$. These are exactly the
remaining domination conditions. An in-arc from $A$ contributes the
constant $1$, a present in-arc from $z$ contributes the constant $0$, and
the deleted arc contributes no support position.

If $C_z$ contains a negative support or the constant $1$, the all-zero
assignment satisfies $\Phi_W$. We may therefore assume that $C_z$
consists of $s_0\in\{1,2\}$ positive supports. Here $s_0\ge1$, because
at most one arc was deleted, and no zero support occurs in $C_z$ because
$\vec L$ is loopless.

\medskip
\noindent\emph{Step 2: the support graph.}
Add a source $s$ and one source arc for each positive support in $C_z$.
A positive support $\xi_j$ in $C_i$ gives the arc $i\to j$, and a
negative support $\neg\xi_j$ is denoted by $i\to\overline j$. Let $R$
be the subgraph induced by $s$ and the variables reachable from $s$ by
positive arcs. Suppose that $\Phi_W$ is not satisfiable. By
Lemma~\ref{lem:support-obstruction}, the graph $R$ is acyclic, no
reachable tail clause contains the constant $1$, and every negative
target is reachable and strictly dominates the index of its clause.
Strictness follows from looplessness.

Let $\nu$ be the number of reachable variables. Let $E_+$ and $E_-$ be
the numbers of positive and negative support occurrences in the reachable
tail clauses, and let $J$ be the number of reachable variables of
indegree two, with source arcs included. A support arc entering $j$ comes
from an arc of $\vec Q$ leaving $h_j$. The two out-neighbors of $h_j$ are
distinct, so ordinary support arcs entering $j$ have distinct tails. The
positive supports in $C_z$ involve distinct variables because the
in-neighbors of $z$ and the matched heads are distinct. Thus at most one
source arc enters a variable. If a source arc enters $j$, the arc
$h_j\to z$ uses one out-arc of $h_j$, so at most one ordinary support arc
also enters $j$. Hence every reachable variable has indegree one or two,
and distinct entering arcs have distinct tails. Counting entering arcs
gives
\begin{equation}
 E_++s_0=\nu+J.
 \label{eq:relative-indegree}
\end{equation}

\medskip
\noindent\emph{Step 3: deficient support positions.}
Each reachable tail clause has two original support positions in $\vec L$.
Call a position \emph{deficient} if its arc is deleted, or if its arc is
present in $\vec Q$ and has tail $z$, in which case it contributes the
constant $0$. Let $\Delta$ be the number of deficient positions among the
reachable tail clauses. We claim that
\begin{equation}
 \Delta\le s_0.
 \label{eq:relative-deficiency}
\end{equation}

Suppose first that $s_0=1$. The other original in-arc of $z$ cannot give
a positive support, a negative support, the constant $1$, or the constant
$0$. These four possibilities would, respectively, give $s_0=2$, make the
all-zero assignment work, make the all-zero assignment work, or create a
loop. Hence that in-arc is the deleted arc, so the deletion does not remove
a support from a tail clause. Moreover, $z\to a_0$ uses one of the two
out-arcs of $z$. At most one support in a reachable tail clause is therefore
the constant $0$, and $\Delta\le1=s_0$.

Suppose next that $s_0=2$. No deleted arc enters $z$. At most one out-arc
of $z$ contributes the constant $0$, because the other is $z\to a_0$,
and the deleted arc removes at most one further support position. Hence
$\Delta\le2=s_0$. This proves \eqref{eq:relative-deficiency}.

No reachable tail clause contains the constant $1$. Every nondeficient
support position is therefore a positive or negative literal. Its target
is reachable directly in the positive case and by
Lemma~\ref{lem:support-obstruction}\textnormal{(iii)} in the negative
case. Consequently,
\begin{equation}
 E_++E_-=2\nu-\Delta.
 \label{eq:relative-support-count}
\end{equation}
Together with \eqref{eq:relative-indegree},
\eqref{eq:relative-deficiency}, and \eqref{eq:relative-support-count},
this gives
\begin{equation}
 E_-=\nu+s_0-J-\Delta\ge\nu-J.
 \label{eq:relative-negative-count}
\end{equation}

\medskip
\noindent\emph{Step 4: the dominator-tree contradiction.}
A negative target $j$ comes from an arc leaving $t_j$. The matched arc
$t_j\to h_j$ already uses one out-arc of $t_j$, so each target occurs at
most once. Every negative target is a nonleaf of the dominator tree. By
Lemma~\ref{lem:dominator-leaves}, the tree has at least $J+1$ variable
leaves and at most $\nu-J-1$ nonleaf variables. This contradicts
\eqref{eq:relative-negative-count}. Hence $\Phi_W$ is satisfiable.

For a satisfying assignment, let $K_W$ contain $x_{t_i} y_{h_i}$ when
$\xi_i=0$ and $x_{h_i} y_{h_i}$ when $\xi_i=1$. These edges are pairwise
disjoint and use only indices in $W$. Since $A\cap W=\varnothing$, their
union with the selected fixed edges at $A$ is also a matching. The clauses
dominate both lift vertices at every omitted tail index. At the exposed
index, $x_z$ is dominated by $y_{a_0}$ and $y_z$ is dominated by $C_z$.
At every remaining index in $W$, at least one lift vertex is selected.
Whenever exactly one lift vertex at an index is selected, the other is
dominated through the fixed edge. This proves the lemma.
\end{proof}

\subsection{The Gallai--Edmonds construction}

\begin{definition}\label{def:gallai-edmonds}
Let $Q$ be a graph. Let $D_{\rm GE}$ be the set of vertices missed by at
least one maximum matching of $Q$. Put
$A_{\rm GE}=N_Q(D_{\rm GE})\setminus D_{\rm GE}$ and
$C_{\rm GE}=V(Q)\setminus(D_{\rm GE}\cup A_{\rm GE})$. The three sets form the \emph{Gallai--Edmonds decomposition} of $Q$.
For a multigraph, we apply this definition to its underlying simple graph.
\end{definition}

We use the Gallai--Edmonds Structure Theorem as stated by
Lov\'asz and Plummer~\cite[Theorem~3.2.1]{LovaszPlummer1986}. Every
component of $Q[D_{\rm GE}]$ is factor-critical,
$Q[C_{\rm GE}]$ has a perfect matching, and there is no edge from
$D_{\rm GE}$ to $C_{\rm GE}$. If $Q[D_{\rm GE}]$ has $s$ components,
then every maximum matching leaves $\delta=s-|A_{\rm GE}|$ vertices
exposed. For a multigraph, replacing each parallel class by one edge
preserves matching size and the possible sets of covered vertices. It
therefore preserves the Gallai--Edmonds sets, components,
perfect-matchability, and factor-criticality. When the theorem supplies a
matching edge, we choose a corresponding edge instance of the original
multigraph and keep its direction in the lift.

\begin{theorem}\label{thm:gallai-lift}
Let $\vec L$ be a legal balanced digraph of even order
$m$, and let $\vec Q$ be obtained from $\vec L$ by deleting at most one
arc. Let $Q$ be the underlying multigraph of $\vec Q$, and suppose that
$Q$ is connected. If $Q$ has no perfect matching, then the canonical lift
$L(\vec L,\vec Q)$ has a dominating matching whose endpoint set has size
at most $m-2$.
\end{theorem}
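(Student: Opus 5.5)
We use the Gallai--Edmonds decomposition $(D_{\rm GE},A_{\rm GE},C_{\rm GE})$ of $Q$ and build the dominating matching in three layers. On $A_{\rm GE}$ we select every fixed edge $x_ay_a$, at cost $2|A_{\rm GE}|$. On $C_{\rm GE}$ we take a perfect matching of $Q[C_{\rm GE}]$ and use its standard lift, at cost $|C_{\rm GE}|$. Every index of $C_{\rm GE}$ then has exactly one selected lift vertex, and the other is dominated through the fixed edge. On each component $W$ of $Q[D_{\rm GE}]$ we apply Lemma~\ref{lem:relative-switch}, at cost $|W|-1$. Its hypotheses are satisfied. The component $Q[W]$ is factor-critical. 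Every neighbor of $W$ outside $W$ lies in $A_{\rm GE}$, since there are no $D_{\rm GE}$--$C_{\rm GE}$ edges and distinct components of $Q[D_{\rm GE}]$ are nonadjacent. Every fixed edge on $A_{\rm GE}$ has been selected.

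The counting goes as follows. Let $s$ be the number of components of $Q[D_{\rm GE}]$. The total cost is
\[
 2|A_{\rm GE}|+|C_{\rm GE}|+|D_{\rm GE}|-s=m-(s-|A_{\rm GE}|)=m-\delta .
\]
Here $\delta=s-|A_{\rm GE}|$ is the number of vertices left exposed by a maximum matching of $Q$. Since $Q$ has no perfect matching, $\delta\ge1$. Since $m$ is even, $\delta\equiv m\pmod 2$, so $\delta\ge2$ and the cost is at most $m-2$. The three pieces of the matching are disjoint, so their union is a matching. The lemma gives domination of the lift vertices indexed by $W$. The lift vertices indexed by $A_{\rm GE}$ are selected. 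Those indexed by $C_{\rm GE}$ are handled by the fixed edges. All chosen edges exist in $L(\vec L,\vec Q)$, because we lift only arcs of $\vec Q$ and fixed edges.

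The main obstacle is the extra hypothesis of Lemma~\ref{lem:relative-switch}: each component $W$ must have an arc of $\vec Q$ between some $z\in W$ and $A_{\rm GE}$, in either direction. First, $A_{\rm GE}\neq\varnothing$. Otherwise $s=\delta\ge2$ and $D_{\rm GE}$ would be separated from $C_{\rm GE}$, so $Q$ would be disconnected. Second, each component $W$ has an edge to $A_{\rm GE}$. Otherwise $W$ would be a component of $Q$ itself, and $Q\ne Q[W]$ since $A_{\rm GE}\neq\varnothing$, contradicting connectivity. Such an edge instance is an arc of $\vec Q$ with one end $z\in W$ and the other in $A_{\rm GE}$, and either direction is covered by the lemma.

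We should also check the degenerate case $|W|=1$, where the lemma needs only a present in-arc at $z$. This case is already handled inside the lemma's proof. We should finally confirm that the parallel-edge convention of Remark~\ref{rem:multigraph-matching} lets us pick directed edge instances for the perfect matching of $C_{\rm GE}$. Proposition~\ref{prop:matching-form} then turns the dominating matching into a paired dominating set of size at most $m-2$.
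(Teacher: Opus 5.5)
Your proposal is correct and follows essentially the same route as the paper: fixed edges on $A_{\rm GE}$, the standard lift of a perfect matching of $Q[C_{\rm GE}]$, Lemma~\ref{lem:relative-switch} on each component of $Q[D_{\rm GE}]$ via an arc to $A_{\rm GE}$ guaranteed by connectivity, and the count $m-\delta$ with $\delta\ge2$ by parity. The one small difference is how you rule out $A_{\rm GE}=\varnothing$. You note that $s=\delta\ge2$ would make $Q$ disconnected, whereas the paper observes that either $V(Q)=C_{\rm GE}$, which gives a perfect matching, or $V(Q)=D_{\rm GE}$, which makes $Q$ factor-critical and hence of odd order; both arguments are valid.
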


\begin{proof}
Apply the Gallai--Edmonds Structure Theorem
\cite[Theorem~3.2.1]{LovaszPlummer1986} to $Q$, and let
$W_1,\ldots,W_s$ be the components of $Q[D_{\rm GE}]$.
We first note that $A_{\rm GE}\ne\varnothing$. Otherwise, since there is no
edge from $D_{\rm GE}$ to $C_{\rm GE}$ and $Q$ is connected, either
$V(Q)=C_{\rm GE}$ or $V(Q)=D_{\rm GE}$. The first case gives a perfect
matching of $Q$. In the second case, $Q$ is factor-critical and hence has
odd order, contrary to the even order $m$.

Select the fixed edge $x_a y_a$ for every $a\in A_{\rm GE}$, and call the
resulting matching $K_A$. Choose a perfect matching of $Q[C_{\rm GE}]$ and
lift its edge instances in the standard way. Call this matching $K_C$.
At each index in $C_{\rm GE}$, one lift vertex is selected and the other is
dominated through the fixed edge.

Every neighbor of $W_i$ outside $W_i$ lies in $A_{\rm GE}$. Also,
$W_i$ has such a neighbor, since $Q$ is connected. Choose one edge between
$W_i$ and $A_{\rm GE}$, keep its direction in $\vec Q$, and apply
Lemma~\ref{lem:relative-switch}. We obtain a matching $K_i$ of
$(|W_i|-1)/2$ edges. Its endpoints, together with the selected vertices at
$A_{\rm GE}$, dominate all lift vertices indexed by $W_i$.

The index sets $A_{\rm GE}$, $C_{\rm GE}$, and
$W_1,\ldots,W_s$ are disjoint. Hence
$K=K_A\cup K_C\cup\bigcup_{i=1}^{s}K_i$ is a matching. Its endpoint set dominates the canonical lift. Its size is
\[
 \begin{aligned}
 |V(K)|
 &=2|A_{\rm GE}|+|C_{\rm GE}|+
   \sum_{i=1}^{s}(|W_i|-1)\\
 &=2|A_{\rm GE}|+|C_{\rm GE}|+|D_{\rm GE}|-s\\
 &=m-(s-|A_{\rm GE}|)=m-\delta.
 \end{aligned}
\]
The numbers $m$ and $\delta$ have the same parity. Since $Q$ has no
perfect matching, $\delta>0$. Thus $\delta\ge2$ and
$|V(K)|\le m-2$.
\end{proof}

\subsection{Deleting one arc}

\begin{lemma}
\label{lem:deleted-prescribed}
Let $\vec L$ be a legal balanced digraph of even order, let
$\epsilon=\alpha\to\beta$ be an arc, and put
$\vec Q=\vec L-\epsilon$. Let $Q$ be the underlying
multigraph of $\vec Q$. If $Q$ has a perfect matching $P$, then the
edge-deleted canonical lift $L(\vec L,\vec Q)$ has
\begin{enumerate}[label=\textnormal{(\roman*)}]
\item a dominating matching with $|P|$ edges that contains $x_\alpha$, and
\item a dominating matching with $|P|$ edges that contains $y_\beta$.
\end{enumerate}
In both matchings, the prescribed vertex is covered by a matching edge.
\end{lemma}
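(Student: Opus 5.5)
The plan is to follow the proof of Theorem~\ref{thm:prescribed-switch}, now inside the edge-deleted lift. The deleted arc $\epsilon$ removes exactly one lifted edge $x_\alpha y_\beta$. Write $P=\{t_i\to h_i\}$ with the directions of $\vec Q$. Since $P$ is a perfect matching of $Q$, it avoids $\epsilon$, so its standard lift lies in $L(\vec L,\vec Q)$. The only effect of the deletion is that one support position disappears: in the tail clause $C_i$ whose tail $t_i$ equals $\beta$, the support position coming from the arc $\alpha\to\beta$ is missing.

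For part~(i), first suppose that $\alpha$ is a tail $t_k$ of $P$. Then I use head switching with $\xi_k$ forced to $0$, or, if the counting below fails, tail switching at index $k$. Tail switching keeps $x_{t_i}$ selected for every $i$, so $x_\alpha$ is covered. Now suppose instead that $\alpha=h_k$ is a head. Then I prescribe the head switch at $k$, which selects the fixed edge $x_{h_k}y_{h_k}$ and so covers $x_\alpha$.

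In the tail-switching case I apply the second (tail) version of the argument of Theorem~\ref{thm:prescribed-switch} with a free formula. Here the variables switch to the fixed edge $x_{t_i}y_{t_i}$, and the clauses are indexed by heads whose $x_{h_i}$ needs an out-neighbour selected on the $Y$ side. The missing arc $\alpha\to\beta$ then affects the clause at $\alpha$ if $\alpha$ is a head. So I arrange the case split so that the deleted arc never lies in a clause that is actually used. Concretely:
\begin{itemize}
\item if $\alpha=t_k$, use tail switching with the all-zero assignment except as needed;
\item if $\alpha=h_k$, use head switching with $\xi_k=1$ prescribed.
\end{itemize}

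The main step is to repeat the counting of Theorem~\ref{thm:prescribed-switch} when one support position is deficient. With source $s\to k$, the indegree identity stays $E_++1=\nu+J$. The support count becomes $E_++E_-\ge 2\nu-1$, which gives $E_-\ge\nu-J$. This still contradicts the dominator-tree bound of at most $\nu-J-1$ nonleaf variables from Lemma~\ref{lem:dominator-leaves}, exactly as in \eqref{eq:relative-negative-count}. The deficient position lies in the clause of $t_i=\beta$ under head switching. Legality still gives distinct tails for arcs entering each variable, and negative targets still occur at most once, so the argument goes through.

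The all-zero assignment must also remain valid when no switch is prescribed: if $\beta$ is a head, $y_\beta$ is selected; if $\beta=t_i$ with $\xi_i=0$, then $x_\beta$ is selected and $y_\beta$ is dominated via the fixed edge. So the deletion only matters for switched tails, and that is what the counting above handles.

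Part~(ii) follows by reversing all arcs and interchanging the partite classes. This turns $\epsilon$ into $\beta\to\alpha$ and swaps the roles of $x$ and $y$.

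The main obstacle is the case where the prescribed index $k$ is itself the variable whose clause has lost a support, for example $\alpha=h_k$ and $\beta=t_k$. That case is impossible, because the arc $\alpha\to\beta$ is deleted and so is not the matched arc $t_k\to h_k$, and $h_k\to t_k$ would be a reverse arc. In general, I will check that a single deficient position costs only one in the support count, using the slack of one in $E_-=\nu+1-J$.
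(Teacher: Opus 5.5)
Your proposal is correct and follows the paper's proof. The standard lift (your all-zero assignment) settles $\alpha=t_k$. The prescribed head switch at $k$ handles $\alpha=h_k$: the single missing support position in the clause of $\beta$ is absorbed by the slack $E_-\ge\nu-J$ against at most $\nu-J-1$ nonleaf variables. Part~(ii) then follows by reversal.

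One remark is wrong but not load-bearing. The case $\alpha=h_k$, $\beta=t_k$ is not impossible, because legality allows digons $t_k\to h_k\to t_k$. Your general count with one deficient position already covers that case, and the tail-switching detour for $\alpha=t_k$ is unnecessary.
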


\begin{proof}
Write the edge instances of $P$ in their directions as
$P=\{t_i\to h_i:1\le i\le r\}$. If $\alpha$ is a matched tail, the standard lift of $P$ is already a
dominating matching and contains $x_\alpha$. We may therefore assume that
$\alpha=h_k$ for some matched arc $t_k\to h_k$.

Use the head-switching variables and the literal map from
Definition~\ref{def:switching-support}. Since $P$ is perfect,
$\lambda(u)$ is defined for every $u\in V(\vec Q)$. Consider
\[
 \Phi_Q=
 \xi_k\wedge
 \bigwedge_{i=1}^{r}
 \left(
  \neg\xi_i\vee
  \bigvee_{u\in\Nin_{\vec Q}(t_i)}\lambda(u)
 \right).
\]
This is the head-switching formula with the switch at $k$ prescribed. The
deleted arc gives no support in the affected tail clause.

Build the support graph with the single source arc $s\to k$, and let $R$
be its reachable positive part. Suppose that $\Phi_Q$ is not satisfiable.
Lemma~\ref{lem:support-obstruction} then shows that $R$ is acyclic and that
every negative target is reachable and strictly dominates the index of its
clause. By construction, $R$ is a finite rooted flow graph.

The deleted arc leaves $h_k=\alpha$. Thus $h_k$ has only one out-arc in
$\vec Q$. A support arc entering a variable $j$ records an arc of
$\vec Q$ leaving $h_j$. Legality implies that distinct ordinary support arcs entering $j$ have
distinct tails. The source tail $s$ is different from every variable, so
all arcs entering a vertex of $R$ have distinct tails. Including the source
arc, the indegree of $k$ in $R$ is at most two. Every other reachable
variable also has
indegree at most two. Let $\nu$ be the number of reachable variables, let
$E_+$ and
$E_-$ count positive and negative support occurrences in the reachable tail
clauses, and let $J$ be the number of reachable variables of indegree two.
Then $E_++1=\nu+J$.

Put $d=1$ if $\beta$ is a reachable matched tail, and put $d=0$ otherwise.
The deleted arc removes one support position from a reachable tail clause
exactly when $d=1$. All other support positions are literals whose targets
are reachable. Hence $E_++E_-=2\nu-d$, and it follows that
$E_-=\nu+1-J-d\ge\nu-J$.

A negative target $j$ occurs at most once, because the matched arc
$t_j\to h_j$ already uses one of the two arcs leaving $t_j$. The negative
targets are therefore distinct nonleaves of the dominator tree. The
hypotheses of Lemma~\ref{lem:dominator-leaves} hold for
$R$, so its dominator tree has at least $J+1$ variable leaves and at most
$\nu-J-1$ nonleaf variables. This contradicts
$E_-\ge\nu-J$. Thus $\Phi_Q$ is satisfiable.

Switch exactly the true variables. The resulting matching uses only fixed
edges and lifts of edges of $P$, so it does not use the deleted edge
$x_\alpha y_\beta$. Its clauses involve only arcs of $\vec Q$ and therefore
prove domination in $L(\vec L,\vec Q)$. Since $\xi_k=1$, the matching
contains $x_\alpha y_\alpha$ and hence covers $x_\alpha$.

For the second statement, reverse all arcs and interchange the two partite
classes. The first statement then gives a matching that covers
$y_\beta$ in the original lift.
\end{proof}

Combining the two preceding alternatives gives the form needed in the final
smallest-counterexample argument.

\begin{lemma}[Terminal alternative]\label{lem:terminal-alternative}
Let $H=(X,Y;E)$ be a finite simple connected cubic bipartite graph with
$|V(H)|\equiv0\pmod4$, and let $e=ab\in E(H)$, where $a\in X$ and
$b\in Y$. Put $m=|V(H)|/2$. Then one of the following holds.
\begin{enumerate}[label=\textnormal{(\alph*)}]
\item The graph $H-e$ has a paired dominating set of size at most $m-2$.
\item The graph $H-e$ has a paired dominating set of size $m$ in which
      $a$ is selected and matched inside $H-e$, and another such set in
      which $b$ is selected and matched inside $H-e$.
\end{enumerate}
In alternative \textnormal{(a)}, the paired dominating set induces a state
in $\mathcal G$ of the same cost. In alternative \textnormal{(b)},
$H-e$ has a state in $\mathcal G$ of cost $m$ whose first coordinate is
$\II$, and another whose second coordinate is $\II$.
\end{lemma}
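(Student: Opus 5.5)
The plan is to contract along a perfect matching avoiding $e$ and split on whether the arc-deleted contraction has a perfect matching. Since $H$ is cubic bipartite, decompose $E(H)$ into three perfect matchings and pick one, $M$, that avoids $e$. Let $\vec L$ be the legal contraction of $(H,M)$; it is a legal balanced digraph of even order $m$, because $|V(H)|\equiv0\pmod4$. The edge $e=ab$ becomes an arc $\epsilon=\alpha\to\beta$ with $a=x_\alpha$ and $b=y_\beta$. Put $\vec Q=\vec L-\epsilon$, so that $L(\vec L,\vec Q)=H-e$. Let $Q$ be the underlying multigraph of $\vec Q$.

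First we check that $Q$ is connected, so that Theorem~\ref{thm:gallai-lift} applies. By Lemma~\ref{lem:cubic-bipartite-bridgeless}, $H$ is bridgeless. Hence every edge cut of $H$ has size at least two, and every cut of the $4$-regular multigraph $H/M$ is even. Therefore every cut of the underlying multigraph of $\vec L$ has size at least two. Deleting one edge instance cannot disconnect it, so $Q$ is connected.

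Case 1: $Q$ has no perfect matching. Theorem~\ref{thm:gallai-lift} gives a dominating matching of $L(\vec L,\vec Q)=H-e$ with at most $m-2$ endpoints. By Proposition~\ref{prop:matching-form}, this yields alternative (a). The paired dominating set lives in $H-e$, so neither boundary vertex is matched through the cut; this excludes $\OO$. An unselected boundary vertex is dominated inside $H-e$, so it gets $\DD$ and not $\RR$. Hence the induced two-terminal state lies in $\{\II,\DD\}^2=\mathcal G$, at the same cost.

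Case 2: $Q$ has a perfect matching $P$, with $|P|=m/2$. Lemma~\ref{lem:deleted-prescribed} gives two dominating matchings of $H-e$ with $m/2$ edges. In the first, $x_\alpha=a$ is covered by a matching edge; in the second, $y_\beta=b$ is covered. Each has $m$ endpoints, so it is a paired dominating set of size $m$ in $H-e$ with the prescribed vertex selected and matched inside $H-e$. This is alternative (b). As in Case 1, the induced states avoid $\OO$ and $\RR$, so they lie in $\mathcal G$. The first has first coordinate $\II$ and the second has second coordinate $\II$.

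The main point needing care is the connectivity of $Q$. The $2$-edge-connectivity of $H/M$ comes from Lemma~\ref{lem:cubic-bipartite-bridgeless} together with the fact that cuts of a $4$-regular multigraph have even size. Beyond that, the only check is that the boundary symbols are read correctly: a vertex dominated inside $H-e$ gets $\DD$ rather than $\RR$. The parity input from $|V(H)|\equiv0\pmod4$ enters only through $m$ being even, as required by both Theorem~\ref{thm:gallai-lift} and Lemma~\ref{lem:deleted-prescribed}.
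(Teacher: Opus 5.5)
Your proposal is correct and follows the paper's proof essentially step by step: choose a perfect matching $M$ avoiding $e$, contract it, delete the arc $\alpha\to\beta$, and split on whether $Q$ has a perfect matching, using Theorem~\ref{thm:gallai-lift} in one case and Lemma~\ref{lem:deleted-prescribed} in the other; in both cases the boundary symbols lie in $\{\II,\DD\}$, so the states belong to $\mathcal G$. The only cosmetic difference is how you get connectivity of $Q$: you use the even cuts of the $4$-regular contraction, whereas the paper observes that $H-e$ is connected by Lemma~\ref{lem:cubic-bipartite-bridgeless} and then contracts $M$.
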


\begin{proof}
Decompose $H$ into three perfect matchings, and choose one, say $M$, that
avoids $e$. Contract $M$ to obtain a legal balanced digraph $\vec L$ of
order $m$. Let $\epsilon=\alpha\to\beta$ be the arc corresponding to $e$,
and put $\vec Q=\vec L-\epsilon$. Let $Q$ be the underlying multigraph
of $\vec Q$.

The graph $H-e$ is connected by
Lemma~\ref{lem:cubic-bipartite-bridgeless}. Hence $Q$ is connected. If
$Q$ has no perfect matching, Theorem~\ref{thm:gallai-lift} gives a
dominating matching in $H-e$ whose endpoint set has size at most $m-2$.
By Proposition~\ref{prop:matching-form}, this endpoint set is a paired
dominating set, and alternative \textnormal{(a)} holds. If $Q$ has a
perfect matching, Lemma~\ref{lem:deleted-prescribed} gives two dominating
matchings, each with $m$ endpoints: one covers $x_\alpha=a$, and the
other covers $y_\beta=b$. Both matchings use only edges of $H-e$, and
Proposition~\ref{prop:matching-form} shows that their endpoint sets are
paired dominating sets of $H-e$.

A selected boundary vertex is covered by the matching and has symbol
$\II$. An unselected boundary vertex is dominated in $H-e$ and has symbol
$\DD$. Thus every state obtained above belongs to
$\mathcal G=\{\sII,\sID,\sDI,\sDD\}$, and the prescribed coordinate is
$\II$ in the second case.
\end{proof}

\begin{corollary}[Orders divisible by four]\label{cor:even-all}
Every finite simple connected cubic bipartite graph $G$ with
$|V(G)|\equiv0\pmod4$ satisfies $\gpr(G)\le |V(G)|/2$.
\end{corollary}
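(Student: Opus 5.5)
We have to prove Corollary~\ref{cor:even-all}: every connected cubic bipartite $G$ with $|V(G)|\equiv0\pmod4$ satisfies $\gpr(G)\le|V(G)|/2$. The plan is a direct application of Lemma~\ref{lem:terminal-alternative}, which needs no two-edge-cut analysis.

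Put $n=|V(G)|$ and $m=n/2$, which is even. Fix any edge $e=ab\in E(G)$ with $a\in X$ and $b\in Y$, and apply Lemma~\ref{lem:terminal-alternative} to $H=G$ and $e$.

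\emph{Alternative \textnormal{(a)}.} Then $G-e$ has a paired dominating set $S$ with $|S|\le m-2$, together with a perfect matching of $(G-e)[S]$. That matching is still a perfect matching of $G[S]$. Adding the edge $e$ back keeps every vertex dominated, since domination can only improve when edges are added. So $S$ is paired dominating in $G$, and $\gpr(G)\le m-2<n/2$.

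\emph{Alternative \textnormal{(b)}.} Then $G-e$ has a paired dominating set of size $m$ whose induced subgraph has a perfect matching avoiding $e$. By the same argument it is paired dominating in $G$, so $\gpr(G)\le m=n/2$.

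In either case the bound holds, and no minimality or induction is required.

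The only point that needs checking is that Lemma~\ref{lem:terminal-alternative} applies under the stated hypotheses alone: simple, connected, cubic, bipartite, and order divisible by four, with no three-edge-connectivity assumed. Its proof uses only a perfect matching of $H$ avoiding $e$, which exists by the decomposition into three perfect matchings. It also needs connectivity of $H-e$, which follows from Lemma~\ref{lem:cubic-bipartite-bridgeless}. Theorem~\ref{thm:gallai-lift} and Lemma~\ref{lem:deleted-prescribed} likewise require nothing beyond legality and connectivity of $Q$. So the corollary follows immediately, and I expect no real obstacle beyond this check.
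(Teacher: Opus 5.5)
Your proof is correct, but it takes a longer route than the paper. The paper applies Theorem~\ref{thm:gallai-lift} with no arc deleted. It takes $\vec Q=\vec L$, so that $L(\vec L,\vec Q)=G$. Then either $Q$ has a perfect matching, whose standard lift is already a dominating matching with $m$ endpoints (each fixed edge has exactly one selected end), or Theorem~\ref{thm:gallai-lift} gives one with at most $m-2$ endpoints.

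You instead go through Lemma~\ref{lem:terminal-alternative}. That lemma deletes the arc corresponding to $e$, and in the perfectly matchable case it needs the prescribed-switch Lemma~\ref{lem:deleted-prescribed}, with its dominator-tree count, to dominate $G-e$. You then restore $e$ by monotonicity. This is valid, since $S$ still dominates $G$ and $(G-e)[S]$ is a subgraph of $G[S]$. For the corollary, however, the switching step is superfluous: the standard lift of a perfect matching of $Q-\epsilon$ is also the standard lift of a perfect matching of $Q$, so it already dominates $G$ itself.

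Your route does have two things going for it. It yields a formally stronger intermediate statement, namely $\gpr(G-e)\le |V(G)|/2$ for every edge $e$. It also uses Lemma~\ref{lem:terminal-alternative} as a black box, and you checked its hypotheses correctly: no three-edge-connectivity, $m$ even, $Q$ connected, and no circular dependence on the corollary. The paper's route is more economical, needing only the Gallai--Edmonds step and the trivial standard lift.
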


\begin{proof}
Choose a perfect matching of $G$ and contract it to a legal balanced
digraph $\vec L$ of even order $m=|V(G)|/2$. Put $\vec Q=\vec L$, and
let $Q$ be its connected underlying multigraph. Then
$L(\vec L,\vec Q)=L(\vec L)=G$. If $Q$ has a perfect matching, its
standard lift is a dominating matching with $m$ endpoints. Otherwise,
Theorem~\ref{thm:gallai-lift} gives a dominating matching with at most
$m-2$ endpoints. In either case, Proposition~\ref{prop:matching-form}
shows that the endpoint set is a paired dominating set of size at most
$m=|V(G)|/2$.
\end{proof}

\subsection{Proof of the main theorem}\label{sec:completion}

We finish the proof by combining the boundary states with the edge-deleted
alternative. We first summarize the outputs that will be used. If a basic
side has closure of order $c\equiv0\pmod4$, then it has both an $\sOO$
state and an $\sII$ state of cost at most $c/2$; if
$c\equiv2\pmod4$, it has a state in $\NonOpenStates$ of cost at most
$c/2-1$. For a simple terminal graph $H$ of order divisible by four,
Lemma~\ref{lem:terminal-alternative} gives either a state in $\mathcal G$
of cost at most $|V(H)|/2-2$, or two states in $\mathcal G$ of cost
$|V(H)|/2$ with the first and second coordinates prescribed to be $\II$.
The effect of every intervening two-vertex extension is given by
Lemma~\ref{lem:chain-bounds}. No further structural input is needed in the
case analysis below.

\begin{definition}\label{def:distinguished-reduction}
Let $J=(X,Y;E)$ be a connected cubic bipartite multigraph with a
marked edge $e^\star=xy$, where $x\in X$ and $y\in Y$. Suppose that
$e^\star$ is parallel to exactly one other edge. Let $y'$ be the third
neighbor of $x$, and let $x'$ be the third neighbor of $y$. Delete
$x,y$ and all incident edge instances, add the edge
$e_{\rm new}^\star=x'y'$, and make it the new marked edge. This
operation is called a \emph{marked-edge reduction}. Its inverse on the
side obtained after deleting the marked edge is a two-vertex extension.
\end{definition}

\begin{lemma}\label{lem:terminal-simple}
Let $G$ be a finite simple connected cubic bipartite graph, and let $C$ and
$D$ be the sides of a two-edge cut. Start with the closure
$D^+=D+e_D^\star$. Repeatedly applying the marked-edge reduction eventually gives a simple
connected cubic bipartite graph $H$. If the reduction is applied $t$ times and $e$
is the final marked edge, put $D_0=H-e$. Then $D$ is obtained from
$D_0$ by $t$ two-vertex extensions, and $|V(D)|=|V(H)|+2t$.
\end{lemma}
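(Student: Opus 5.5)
We argue by induction on the number of reductions performed, maintaining an invariant for the current multigraph $J_i$ with marked edge $e_i^\star$. The invariant is that $J_i$ is a connected cubic bipartite multigraph, $e_i^\star$ joins a vertex of $X$ to a vertex of $Y$, the only possible parallel pair in $J_i$ involves $e_i^\star$, and $J_i-e_i^\star$ is a two-terminal side from which $D$ arises by $i$ two-vertex extensions.

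The base case is $J_0=D^+$. By Lemma~\ref{lem:boundary-parts}, the ends of $e_D^\star$ are distinct and lie in opposite classes. Since $G$ is simple, $D$ is simple, so the only possible parallel pair consists of $e_D^\star$ and an old edge $ab$. For connectedness, we argue as in Lemma~\ref{lem:minimum-side-basic}. If $D$ were disconnected, some component would have edge boundary at most one in $G$. This is impossible: boundary zero contradicts the connectedness of $G$, and boundary one contradicts Lemma~\ref{lem:cubic-bipartite-bridgeless}.

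For the inductive step, suppose $J_i$ is not simple. By the invariant, $e_i^\star=xy$ is parallel to exactly one other edge. It cannot be parallel to two others, since then $x$ and $y$ would have no further neighbours and $J_i$ would be a triple edge on two vertices. In that case $J_i-e_i^\star$ would be the side $x,y$ joined by a double edge, whose terminals would have degree three rather than two, contrary to the definition of a two-terminal side. So the reduction applies. Let $y'$ and $x'$ be the third neighbours of $x$ and $y$.

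We need $x'\ne$ anything awkward. Specifically, $y'\neq y$ and $x'\ne x$ hold because the parallel class has exactly two edges. Also $x'y'$ is a genuine new edge instance between opposite classes. The new graph is cubic, since $x'$ and $y'$ each lose one edge and gain one. It is bipartite, and it is connected because every path through $\{x,y\}$ must enter via $y'$ and leave via $x'$ or conversely, and so can be rerouted along $x'y'$.

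The new parallel classes can only involve $e_{i+1}^\star=x'y'$, since all other edges are old edges of $J_i$ not incident with $e_i^\star$. Such edges were simple by the invariant, which said the only parallel pair used $e_i^\star$. Deleting $e_{i+1}^\star$ and re-adding $x,y$ with edges $xy$, $xy'$, $yx'$ is precisely a two-vertex extension of $J_{i+1}-e_{i+1}^\star$ with $u=x$, $v=y$. Its terminals $x$ and $y$ have degree two in $J_i-e_i^\star$, so the extension returns $J_i-e_i^\star$. Composing with the invariant gives that $D$ arises from $J_{i+1}-e_{i+1}^\star$ by $i+1$ extensions.

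Each reduction removes two vertices, so the process terminates after some $t$ steps at a simple graph $H$. It must stop before the order drops below the smallest cubic bipartite multigraph. A cubic bipartite multigraph has order at least two, and the order-two case is the triple edge already excluded. The relation $|V(D)|=|V(H)|+2t$ then follows by counting vertices.

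The main obstacle is checking that the process never reaches a degenerate configuration. One such configuration is a triple edge. Another is one in which $x'=y'$ fails to lie in opposite classes, or in which $x'y'$ coincides with the deleted structure. A further risk is that parallel edges appear somewhere other than at the marked edge. We handle the first via the bipartite partition, since $x'\in X$ and $y'\in Y$. The second we handle through the degree-two terminal condition, noting that the endpoint of $G$ fixes $D$ as a genuine two-terminal side, so the triple-edge graph never occurs as $J_i$.
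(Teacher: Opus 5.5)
Your proposal is correct and is essentially the paper's argument: $D$ is connected by Lemma~\ref{lem:cubic-bipartite-bridgeless}, the marked edge joins opposite classes by Lemma~\ref{lem:boundary-parts}, only the marked edge can be parallel to another edge because every other edge is inherited from the simple graph $G$, each reduction keeps the graph connected, cubic and bipartite while removing two vertices, and the inverse of a reduction is a two-vertex extension with $u=x$, $v=y$. One correction: your stated reason for excluding a triple edge is wrong, since the double edge on $\{x,y\}$ has terminals of degree two and so is not excluded by the two-terminal-side definition; the case is, however, already ruled out by your own invariant, because a triple edge would contain two parallel inherited edges, which is exactly how the paper argues.
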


\begin{proof}
Both sides are connected. Otherwise, some component of a side would
have edge boundary at most one. Boundary zero contradicts the connectedness
of $G$, and boundary one contradicts
Lemma~\ref{lem:cubic-bipartite-bridgeless}. By
Lemma~\ref{lem:boundary-parts}, the marked edge of $D^+$ joins
opposite partite classes. Hence $D^+$ is a connected cubic bipartite
multigraph.

During the reductions, every edge other than the marked edge is inherited from the
simple graph $G$, and the marked edge is the only newly added edge.
Suppose one reduction is made at a parallel pair with ends $x\in X$ and
$y\in Y$. With the notation of
Definition~\ref{def:distinguished-reduction}, we have $x'\in X$ and
$y'\in Y$. Thus the new edge $x'y'$ is not a loop and joins opposite
partite classes. Removing $x,y$ lowers only the degrees of $x'$ and $y'$,
and the new edge restores both degrees to three.

The new graph is connected. Indeed, every component of
$J-\{x,y\}$ contains $x'$ or $y'$; otherwise it would already be a component
of $J$. The edge $x'y'$ joins the components containing $x'$ and $y'$.
Thus one reduction preserves connectedness, cubicity, and bipartiteness,
and also preserves the edge condition above.

Two inherited edges are never parallel. Hence the marked edge can
be parallel to at most one inherited edge. Whenever this happens, one
reduction removes two vertices. The process therefore stops. At the end,
the marked edge is not parallel to an inherited edge, so the final
graph $H$ is simple. It is also connected, cubic, and bipartite.

Deleting the final marked edge gives $D_0=H-e$. Reversing the
$t$ reductions gives $t$ two-vertex extensions from $D_0$ to $D$. Since each
reduction removes two vertices, the order formula follows.
\end{proof}

\begin{lemma}\label{lem:repair}
Suppose that local realizations on the two sides of a two-edge cut have
states $\sigma\in\NonOpenStates$ and $\tau\in\mathcal G$. If the
states are not compatible, then exactly one crossed coordinate joins $\RR$
to $\DD$. Selecting the two ends of that cut edge and adding the edge to
the global matching combines the two local realizations at an extra cost of
two.
\end{lemma}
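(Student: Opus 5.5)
We argue by a finite check on the compatibility relation of Definition~\ref{def:compatibility}, followed by a direct verification that the repaired set is paired dominating. Write $\sigma=(\sigma_a,\sigma_b)$ on one side $C$ and $\tau=(\tau_a,\tau_b)$ on the other side $D$. By Definition~\ref{def:crossed-coordinates}, compatibility requires $\sigma_a\sim\tau_b$ and $\sigma_b\sim\tau_a$. The states involved contain no $\OO$: $\sigma\in\NonOpenStates$ uses only $\II,\DD,\RR$, and $\tau\in\mathcal G$ uses only $\II,\DD$. Among the symbols $\II,\DD,\RR$, the only incompatible pairs are $\DD$--$\RR$ and $\RR$--$\RR$. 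Since no coordinate of $\tau$ is $\RR$, a failing crossed pair must have $\RR$ on the $\sigma$ side and $\DD$ on the $\tau$ side.

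\textbf{Step 1: at most one crossed pair fails.} If both pairs failed, we would need $\sigma=\sRR$. This state is not in $\NonOpenStates$, which is $\ClStates$ minus $\sOO$; every state there with an $\RR$ coordinate has the other coordinate equal to $\II$. So exactly one crossed coordinate joins $\RR$ (in $C$) to $\DD$ (in $D$). Call its cut edge $f=wz$, with $w\in C$ carrying $\RR$ and $z\in D$ carrying $\DD$. Because $\sigma\in\{\sIR,\sRI\}$, the other boundary vertex of $C$ has symbol $\II$. The other crossed pair is therefore $\II$ facing $\II$ or $\DD$, which is compatible.

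\textbf{Step 2: the repair.} Take the union $S=S_C\cup S_D$ of the two local realizations and add $w$ and $z$. This is legitimate because both are unselected: symbols $\RR$ and $\DD$ mean not selected. Add $f$ to the union $K_C\cup K_D$. Since $w,z\notin S$, neither is covered by $K_C$ or $K_D$, so the result is a matching. It covers exactly $S\cup\{w,z\}$: every selected vertex other than an open boundary vertex was covered, and there are no $\OO$ symbols. The extra cost is $2$.

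\textbf{Step 3: domination.} Condition (B3) covers all internal vertices on each side. The vertices $w$ and $z$ are now selected. The other boundary vertex of $C$ has symbol $\II$, so it is selected. The other boundary vertex of $D$ is $\II$ or $\DD$, so it is either selected or dominated inside $D$. Hence $S\cup\{w,z\}$ is a paired dominating set of size $c_C(\sigma)+c_D(\tau)+2$.

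\textbf{Main obstacle.} The only delicate point is Step 1. It relies on $\sRR\notin\NonOpenStates$ and on the exact list of states in $\NonOpenStates$. Without these, an $\RR$ coordinate of $\sigma$ could face an $\RR$, or both pairs could fail.
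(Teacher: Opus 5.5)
Your proof is correct and follows essentially the same route as the paper. Since $\tau$ has only $\II$ and $\DD$ coordinates, the only possible incompatibility is $\RR$ facing $\DD$; the shape of $\NonOpenStates$ allows at most one $\RR$; and selecting the two unselected ends of that cut edge and matching them through it repairs the gluing at cost two, with your Steps 2 and 3 spelling out the matching and domination checks the paper compresses into one sentence.
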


\begin{proof}
Every symbol of $\tau$ is $\II$ or $\DD$. The only possible failure of
compatibility is therefore $\RR\not\sim\DD$. A state in
$\NonOpenStates$ contains at most one $\RR$, so there is exactly one bad
coordinate. Both ends of the corresponding cut edge are unselected.
Select them and match them through that edge. The old local matching edges
remain disjoint, every selected vertex is matched, and no domination
condition is lost.
\end{proof}

For an even integer $n$, put $B(n)=2\lfloor n/4\rfloor$.

\begin{proof}[Proof of Theorem~\ref{thm:main}]
Suppose that the theorem is false, and let $G$ be a counterexample of
minimum order $n$. By Corollary~\ref{cor:even-all}, we have
$n\equiv2\pmod4$ and $B(n)=n/2-1$.
The graph $G$ is not three-edge-connected by
Theorem~\ref{thm:intro-three-edge}. It has no bridge by
Lemma~\ref{lem:cubic-bipartite-bridgeless}. Hence $G$ has a two-edge cut.

Choose a side $C$ of minimum order among all sides of all two-edge cuts,
and let $D$ be the other side. By
Lemma~\ref{lem:minimum-side-basic}, the closure $F=C^+$ is a simple three-edge-connected cubic bipartite graph. Put
$c=|V(F)|$. Apply Lemma~\ref{lem:terminal-simple} to $D^+$. Let $H$ be
the final simple graph, let $e$ be its marked edge, let $t$ be the
number of reductions, and put $h=|V(H)|$. Then
$n=c+h+2t$.
Both $c$ and $h$ are even, since $F$ and $H$ are cubic bipartite graphs.
By Lemma~\ref{lem:terminal-simple}, $H$ is finite, simple, connected, cubic,
and bipartite. Since $h\le |V(D)|<n$, the minimality of $G$ applies to
$H$. In each case below, two compatible local realizations are joined by
Lemma~\ref{lem:two-cut-identity}, and their costs add.

The four possible congruence cases are
\[
\begin{array}{c|c|c}
 h\pmod4 & c\pmod4 & t\pmod2\\
\hline
 2&0&0\\
 2&2&1\\
 0&0&1\\
 0&2&0
\end{array}
\]
They follow from $n=c+h+2t\equiv2\pmod4$.

We first prove that $h\equiv0\pmod4$. Suppose instead that $h\equiv2\pmod4$. By the choice of $G$,
$\gpr(H)\le h/2-1$. The closure formula~\eqref{eq:closure-formula} gives a
state $\tau\in\ClStates$ in $H-e$ of cost at most $h/2-1$.

\medskip
\noindent\textbf{Case 1.} $c\equiv0\pmod4$.
Then $n=c+h+2t$ shows that $t$ is even. By
Lemma~\ref{lem:chain-bounds}\textnormal{(i)}, the side $D$ has a state in
$\mathcal U$ of cost at most $h/2-1+t=|V(D)|/2-1$. Use the $\sOO$ realization on $C$ when this state is $\sOO$, and use the
$\sII$ realization otherwise. Lemma~\ref{lem:even-basic-profile} gives
both realizations at cost at most $c/2$. The states are compatible, and
the total cost is at most $n/2-1$.

\medskip
\noindent\textbf{Case 2.} $c\equiv2\pmod4$.
Then $t$ is odd. By Lemma~\ref{lem:odd-basic-profile}, $C$ has a state
$\sigma\in\NonOpenStates$ of cost at most $c/2-1$. By
Lemma~\ref{lem:chain-bounds}\textnormal{(ii)}, $D$ has a compatible state of
cost at most $(h/2-1)+t+1=|V(D)|/2$. The total cost is again at most $n/2-1$.

Both cases contradict the choice of $G$. Thus $h\equiv0\pmod4$.

Apply Lemma~\ref{lem:terminal-alternative} to $H$ and $e$. In
alternative \textnormal{(a)}, keep the state of cost at most $h/2-2$.
In alternative \textnormal{(b)}, keep both states of cost $h/2$, one with
first coordinate $\II$ and one with second coordinate $\II$. In
particular, $H-e$ has a state in $\mathcal G$ of cost at most $h/2$.

\medskip
\noindent\textbf{Case 3.} $c\equiv0\pmod4$.
The identity $n=c+h+2t$ shows that $t$ is odd. By
Lemma~\ref{lem:chain-bounds}\textnormal{(v)}, the terminal state extends to
a state in $\mathcal S$ on $D$ at cost at most $h/2+t-1$. The $\sII$ state on $C$ is compatible with every state in $\mathcal S$.
By Lemma~\ref{lem:even-basic-profile}, it has cost at most $c/2$. The
total cost is therefore at most $c/2+h/2+t-1=n/2-1$, a contradiction.

\medskip
\noindent\textbf{Case 4.} $c\equiv2\pmod4$.
Now $t$ is even. Fix a state $\sigma\in\NonOpenStates$ on $C$ with cost at
most $c/2-1$, as given by Lemma~\ref{lem:odd-basic-profile}.

Suppose first that $t\ge2$. Choose any terminal state in $\mathcal G$ of
cost at most $h/2$. By Lemma~\ref{lem:chain-bounds}\textnormal{(v)}, it
extends to the state $\sII$ on $D$ at cost at most $h/2+t$. This state is
compatible with every state in $\NonOpenStates$. Hence the total cost is at
most $(c/2-1)+(h/2+t)=n/2-1$, a contradiction.

It remains to consider $t=0$. If
Lemma~\ref{lem:terminal-alternative}\textnormal{(a)} holds, choose a state
$\tau\in\mathcal G$ of cost at most $h/2-2$. If $\sigma$ and $\tau$ are
compatible, join the two realizations directly. Otherwise use
Lemma~\ref{lem:repair}. Even with the extra two vertices, the total cost is
at most $(c/2-1)+(h/2-2)+2=n/2-1$, a contradiction.

Finally suppose that
Lemma~\ref{lem:terminal-alternative}\textnormal{(b)} holds. If $\sigma$
contains no $\RR$, it is compatible with every state in $\mathcal G$; take
either terminal realization. If $\sigma=\sIR$, use the terminal
realization whose first coordinate is $\II$. If $\sigma=\sRI$, use the
one whose second coordinate is $\II$. The crossed-coordinate convention
from Definition~\ref{def:crossed-coordinates} shows that the chosen states
are compatible. Their total cost is at most $(c/2-1)+h/2=n/2-1$, again a
contradiction.

In all four cases, the two sides have
compatible local realizations whose total cost is at most $B(n)$. Therefore
no counterexample exists, and the theorem follows.
\end{proof}

%\section{Conclusion}\label{sec:conclusion}

%We have proved the Desormeaux--Henning conjecture in the sharp integer
%form $\gpr(G)\le2\lfloor |V(G)|/4\rfloor$ for every finite simple cubic
%bipartite graph $G$. The proof starts with a fixed perfect matching and the
%legal balanced digraph obtained by contracting its edges. A dominator-tree
%count supports the switching arguments in the three-edge-connected case.
%For a two-edge cut, four boundary symbols and an exact transfer table join
%the two sides. Finally, a relative switching lemma and the Gallai--Edmonds
%decomposition provide the two-vertex saving needed in the smallest-counterexample
%argument. The examples $K_{3,3}$ and $Q_3$ attain the bound in the two
%possible congruence classes modulo four.

\section*{Declaration on the use of AI}
The authors used ChatGPT 5.6 Pro to assist in discussing  proof strategies, checking proofs, and improving exposition.

\appendix
\section{Verification of the state-transfer table}\label{app:bond-check}

We use the notation from Theorem~\ref{thm:exact-state-transfer}. The table
below lists every legal choice. The second column gives the selected
subset of the new boundary vertices $\{u,v\}$. The third column gives the
new matching edges $L\subseteq\{uv,uy,vx\}$. The fourth column is the
resulting outer state. The last column is the number of newly selected
vertices.

The list is complete for the following reasons. An inner $\OO$ at $x$
forces $vx\in L$, while an inner $\II$, $\DD$, or $\RR$ at $x$ forbids
$vx$. The corresponding statement at $y$ holds for $uy$. An inner
request at $x$ forces $v$ to be selected, and an inner request at $y$
forces $u$ to be selected. Thus conditions \textnormal{(T1)}--\textnormal{(T3)}
first determine the allowed incidences of $vx$ and $uy$ and the compulsory
members of the selected set. Among the remaining possibilities,
\textnormal{(T1)} determines whether $uv$ may belong to $L$, and
\textnormal{(T4)} uniquely determines both outer symbols. Hence the table
exhausts all $4\cdot8$ choices for each inner state and gives the $45$
entries listed in Table~\ref{tab:full-state-transfer}.

\begingroup
\small
\setlength{\tabcolsep}{5pt}
\renewcommand{\arraystretch}{1.08}
\begin{longtable}{c c c c c}
\caption{All legal choices in one two-vertex extension.}\label{tab:bond-witnesses}\\
\toprule
Inner state & Selected new vertices & New matching $L$ & Outer state & $d$\\
\midrule
\endfirsthead
\toprule
Inner state & Selected new vertices & New matching $L$ & Outer state & $d$\\
\midrule
\endhead
\midrule
\multicolumn{5}{r}{Continued on the next page}\\
\endfoot
\bottomrule
\endlastfoot
$\sOO$ & $\{u,v\}$ & $\{uy,vx\}$ & $\sII$ & $2$\\
\addlinespace[2pt]
$\sOI$ & $\{v\}$ & $\{vx\}$ & $\sDI$ & $1$\\
 & $\{u,v\}$ & $\{vx\}$ & $\sOI$ & $2$\\
\addlinespace[2pt]
$\sOD$ & $\{v\}$ & $\{vx\}$ & $\sDI$ & $1$\\
 & $\{u,v\}$ & $\{vx\}$ & $\sOI$ & $2$\\
\addlinespace[2pt]
$\sOR$ & $\{u,v\}$ & $\{vx\}$ & $\sOI$ & $2$\\
\addlinespace[2pt]
$\sIO$ & $\{u\}$ & $\{uy\}$ & $\sID$ & $1$\\
 & $\{u,v\}$ & $\{uy\}$ & $\sIO$ & $2$\\
\addlinespace[2pt]
$\sII$ & $\varnothing$ & $\varnothing$ & $\sDD$ & $0$\\
 & $\{v\}$ & $\varnothing$ & $\sDO$ & $1$\\
 & $\{u\}$ & $\varnothing$ & $\sOD$ & $1$\\
 & $\{u,v\}$ & $\varnothing$ & $\sOO$ & $2$\\
 & $\{u,v\}$ & $\{uv\}$ & $\sII$ & $2$\\
\addlinespace[2pt]
$\sID$ & $\varnothing$ & $\varnothing$ & $\sRD$ & $0$\\
 & $\{v\}$ & $\varnothing$ & $\sDO$ & $1$\\
 & $\{u\}$ & $\varnothing$ & $\sOD$ & $1$\\
 & $\{u,v\}$ & $\varnothing$ & $\sOO$ & $2$\\
 & $\{u,v\}$ & $\{uv\}$ & $\sII$ & $2$\\
\addlinespace[2pt]
$\sIR$ & $\{u\}$ & $\varnothing$ & $\sOD$ & $1$\\
 & $\{u,v\}$ & $\varnothing$ & $\sOO$ & $2$\\
 & $\{u,v\}$ & $\{uv\}$ & $\sII$ & $2$\\
\addlinespace[2pt]
$\sDO$ & $\{u\}$ & $\{uy\}$ & $\sID$ & $1$\\
 & $\{u,v\}$ & $\{uy\}$ & $\sIO$ & $2$\\
\addlinespace[2pt]
$\sDI$ & $\varnothing$ & $\varnothing$ & $\sDR$ & $0$\\
 & $\{v\}$ & $\varnothing$ & $\sDO$ & $1$\\
 & $\{u\}$ & $\varnothing$ & $\sOD$ & $1$\\
 & $\{u,v\}$ & $\varnothing$ & $\sOO$ & $2$\\
 & $\{u,v\}$ & $\{uv\}$ & $\sII$ & $2$\\
\addlinespace[2pt]
$\sDD$ & $\varnothing$ & $\varnothing$ & $\sRR$ & $0$\\
 & $\{v\}$ & $\varnothing$ & $\sDO$ & $1$\\
 & $\{u\}$ & $\varnothing$ & $\sOD$ & $1$\\
 & $\{u,v\}$ & $\varnothing$ & $\sOO$ & $2$\\
 & $\{u,v\}$ & $\{uv\}$ & $\sII$ & $2$\\
\addlinespace[2pt]
$\sDR$ & $\{u\}$ & $\varnothing$ & $\sOD$ & $1$\\
 & $\{u,v\}$ & $\varnothing$ & $\sOO$ & $2$\\
 & $\{u,v\}$ & $\{uv\}$ & $\sII$ & $2$\\
\addlinespace[2pt]
$\sRO$ & $\{u,v\}$ & $\{uy\}$ & $\sIO$ & $2$\\
\addlinespace[2pt]
$\sRI$ & $\{v\}$ & $\varnothing$ & $\sDO$ & $1$\\
 & $\{u,v\}$ & $\varnothing$ & $\sOO$ & $2$\\
 & $\{u,v\}$ & $\{uv\}$ & $\sII$ & $2$\\
\addlinespace[2pt]
$\sRD$ & $\{v\}$ & $\varnothing$ & $\sDO$ & $1$\\
 & $\{u,v\}$ & $\varnothing$ & $\sOO$ & $2$\\
 & $\{u,v\}$ & $\{uv\}$ & $\sII$ & $2$\\
\addlinespace[2pt]
$\sRR$ & $\{u,v\}$ & $\varnothing$ & $\sOO$ & $2$\\
 & $\{u,v\}$ & $\{uv\}$ & $\sII$ & $2$\\
\end{longtable}
\endgroup

\end{document}